\newtheorem{theorem}[subsection]{Theorem}
\newtheorem{lemma}[subsection]{Lemma}
\newtheorem{proposition}[subsection]{Proposition}
\newtheorem{corollary}[subsection]{Corollary}
\theoremstyle{definition}
\newtheorem{remark}[subsection]{Remark}
\newtheorem{definition}[subsection]{Definition}
\newtheorem{example}[subsection]{Example}
\def\Lie{\operatorname{\textbf{\textsf{Lie}}}}
\def\Lb{\operatorname{\textbf{\textsf{Lb}}}}
\def\Set{\operatorname{\textbf{\textsf{Set}}}}
\numberwithin{equation}{section}
\theoremstyle{definition}
\def\atwo#1#2{\begin{matrix}&\ovs{#1}{\rra}\\[-3mm]&\uns{#2}{\rra} \end{matrix}}
\def\athree#1#2{\begin{matrix}\ovs{#1}{\rra}\\[-2mm]\rra\\[-2mm]\uns{#2}{\rra}\end{matrix}}
\def\al{\alpha}
\def\be{\beta}
\def\de{\delta}
\def\lam{\lambda}
\def\rra{\rightarrow}
\def\uns{\underset}
\def\ovs{\overset}
\DeclareMathOperator{\Ker}{\mathsf {Ker}}
\DeclareMathOperator{\Coker}{\mathsf {Coker}}
\DeclareMathOperator{\Img}{\mathsf {Im}}
\DeclareMathOperator{\ab}{\rm{ab}}
\def\lim{\operatorname{lim}}
\newcommand{\mm}{\mathfrak{m}}
\newcommand{\nn}{\mathfrak{n}}
\newcommand{\Gg}{\mathfrak{g}}
\newcommand{\Aa}{\mathfrak{a}}
\newcommand{\bb}{\mathfrak{b}}
\begin{document}
\title[A non-abelian exterior product and homology of Leibniz algebras]{A non-abelian exterior product and homology of Leibniz algebras}

\author[G. Donadze]{Guram Donadze$^1$}
\address{$^1$Indian Institute of Science, Education and Research Thiruvananthapuram, 695016 Thiruvananthapuram, Kerala, India}
\email{gdonad@gmail.com}
\author[X. Garc\'ia-Mart\'inez]{Xabier Garc\'ia-Mart\'inez$^2$}
\address{$^2$Department of Mathematics, Universidade de Santiago de Compostela\\
15782 Santiago de Compostela, Spain}
\email{xabier.garcia@usc.es}
\author[E. Khmaladze]{Emzar Khmaladze$^3$}
\address{$^3$A. Razmadze Mathematical Institute of Tbilisi State University,
	Tamarashvili St. 6, 0177 Tbilisi, Georgia \& University of Georgia, Kostava St. 77a, 0171 Tbilisi, Georgia  }
\email{e.khmal@gmail.com}

\begin{abstract}
We introduce a non-abelian exterior product of two crossed modules of Leibniz algebra and investigate its relation to the low dimensional Leibniz homology. Later this non-abelian exterior product is applied to the construction of eight term exact sequence in Leibniz homology. Also its relationship to the universal quadratic functor is established, which is applied to the comparison of the second Lie and Leibniz homologies of a Lie algebra.
\end{abstract}

\subjclass[2010] {18G10, 18G50.}

\keywords{Leibniz algebra, Leibniz homology, non-abelian tensor and  exterior products.}

\maketitle

\section{Introduction}\label{S:In}

Leibniz algebras were first defined in 1965 by Bloh \cite{Blo} as a non skew-symmetric analogue of Lie algebras but they became very popular when in 1993 Loday rediscovered them in \cite{Lo}. One of the main reasons that Loday had to introduced them was that in the Lie homology complex the only property of the bracket needed was the so called Leibniz identity.
Therefore, one can think about this notion as ``non-commutative'' analog of Lie algebras and study its homology. Since then, many authors have been studying them obtaining very relevant algebraic results (\cite{LoPi}, \cite{LoPi2}) and due their relations with Physics (\cite{KiWe}, \cite{Lod}) and Geometry (\cite{FeLoOn}). Many results of Lie algebras have been extended to the Leibniz case.
As an example of these generalizations, Gnedbaye \cite{Gn99} extended to Leibniz algebras the notion of non-abelian tensor product, defined by Brown and Loday in the context of groups \cite{BroLod87} and by Ellis in the context of Lie algebras \cite{Ell91}. The non-abelian tensor product was firstly introduced as a tool in homotopy theory, but it can give us nice information about central extensions and (co)homology.

The notion of non-abelian exterior product was introduced in groups by Brown and Loday \cite{BroLod84} and also extended to the Lie case by Ellis \cite{Ell91}.
The main objective of this manuscript is to give a proper generalization of this concept to Leibniz algebras. Given two ideals $\mathfrak{a}$ and $\mathfrak{b}$ of a Lie algebra $\mathfrak{g}$, the non-abelian exterior product is the quotient of the non-abelian tensor product $\mathfrak{a} \star \mathfrak{b}$ by the elements of the form $c * c$, where $c \in \mathfrak{a} \cap \mathfrak{b}$.
This makes sense in Lie theory since these are elements of the kernel of the homomorphism $\mathfrak{g} \star \mathfrak{g} \to \mathfrak{g}$,  $g\star g' \mapsto [g, g']$ for all $g, g'\in \mathfrak{g}$, but this is not true in Leibniz algebras due to the lack of antisymmetry.
Nevertheless, the non-abelian tensor product of two ideals of a Leibniz algebra, has some duplicity in the elements of the intersection, so this will be the way we can affront the problem. In fact, our definition of the non-abelian exterior product is given for crossed modules of Leibniz algebras, which is more general concept than ideals of Leibniz algebras.

The paper is organized as follows. In Section \ref{S:Pre} we recall some basic definitions and properties of Leibniz algebras and Leibniz homology. In Section \ref{S:Ten} we introduce the non-abelian exterior product and we study its connections with the second Leibniz homology. In Section \ref{S:H3} we obtain an eight term exact sequence using the non-abelian exterior product. In Section \ref{S:quad} we explore the relations with Whitehead's universal quadratic functor. Finally, in Section \ref{S:h2} we compare the second Leibniz homology and the second Lie homology  of a Lie algebra.

\section{Leibniz algebras and homology}\label{S:Pre}

Throughout the paper $\mathbb{K}$ is a field, unless otherwise stated. All vector spaces and algebras are considered over $\mathbb{K}$, all linear maps are $\mathbb{K}$-linear maps and $\otimes$ stands for $\otimes_{\mathbb{K}}$.

\begin{definition}[\cite{Lo}]
A Leibniz algebra is a vector space $\mathfrak{g}$  equipped with a linear
map (Leibniz bracket)
\[
[\;,\; ]\colon \mathfrak{g}\otimes \mathfrak{g} \to \mathfrak{g}
\]
 satisfying the Leibniz identity
 \[
[x,[y,z]] = [[x,y],z] - [[x,z],y],
\]
for all $x,y,z \in \mathfrak{g}$.
\end{definition}

A homomorphism of Leibniz algebras is a linear map preserving the bracket. The respective category of
Leibniz algebras will be denoted by $\Lb$.

A subspace $\mathfrak{a}$ of a Leibniz algebra $\mathfrak{g}$  is called (two-sided) \emph{ideal} of $\mathfrak{g}$ if $[a,x],\;  [x,a]\in \mathfrak{a}$ for all $a\in \mathfrak{a}$ and $x\in \mathfrak{g}$. In this case the quotient space $\mathfrak{g}/\mathfrak{a}$ naturally inherits a Leibniz algebra structure.

An example of ideal of a Leibniz algebra $\mathfrak{g}$ is the \emph{commutator} of $\mathfrak{g}$, denoted by $[\mathfrak{g},\mathfrak{g}]$, which is the subspace of $\mathfrak{g}$  spanned by elements of the form $[x,y]$, $x,y\in \mathfrak{g}$. The quotient $\mathfrak{g}/[\mathfrak{g}, \mathfrak{g}]$ is denoted by $\mathfrak{g}^{\ab}$ and is called \emph{abelianization} of $\mathfrak{g}$. One more example of an ideal is the \emph{center} $C(\mathfrak{g})= \{c \in {\mathfrak{g}} \mid [x,c] = 0=[c,x],\ \text{for all} \  x \in {\mathfrak{g}}\}$ of $\mathfrak{g}$. Note that both  $\mathfrak{g}^{\ab}$ and $C(\mathfrak{g})$ are \emph{abelian Leibniz algebras}, that is, Leibniz algebras with the trivial Leibniz bracket $[\;,\;]=0$.

Clearly, any Lie algebra is a Leibniz algebra and conversely, any Leibniz algebra with the antisymmetric Leibniz bracket is a Lie algebra. This is why Leibniz algebras are called non-commutative generalization of Lie algebras. Thus, there is a full embedding functor $\Lie \hookrightarrow \Lb$, where $\Lie$ denotes the category of Lie algebras. This embedding has a left adjoint $\mathfrak{{Lie}}\colon \Lb \to \Lie$, called the Liezation functor and defined as follows. Given a Leibniz algebra $\mathfrak{g}$, $\mathfrak{Lie}(\mathfrak{g})$  is the quotient of $\mathfrak{g}$ by the subspace (that automatically is an ideal) spanned by elements of the form $[x,x]$, $x\in \mathfrak{g}$ (see for example \cite{KuPi}).

The original reason for introduction of Leibniz algebras was a new variant of Lie homology, called non-commutative Lie homology or Leibniz homology, developed in \cite{Lo0, LoPi} and denoted by $HL_*$. Let us recall the definition of $HL_*$.

Given a Leibniz algebra $\mathfrak{g}$, consider the following chain complex:
\begin{align*}
CL_*(\mathfrak{g})\equiv  \;\; \cdots \overset{d}{\longrightarrow} \mathfrak{g}^{\otimes n} \overset{d}{\longrightarrow}  \mathfrak{g}^{\otimes n-1}\overset{d}{\longrightarrow}
\cdots \overset{d}{\longrightarrow} \mathfrak{g} \overset{d}{\longrightarrow} \mathbb{K},
\end{align*}
where the boundary map $d$ is given by

\begin{align*}
&d(x)=0, \;\;\text{for each}\;\;x\in \mathfrak{g};\\
&d(x_1\otimes \cdots \otimes x_n)= \underset{1\leq i<j \leq n}{\sum} (-1)^j (x_1\otimes \cdots \otimes x_{i-1}\otimes [x_i, x_j]\otimes x_{i+1}
\otimes \cdots \otimes \widehat{x_j} \otimes \cdots \otimes x_n),
\end{align*}
for $x_1, \ldots, x_n\in \mathfrak{g}$ and $n>1$. The $n$-th homology group of $\mathfrak{g}$ is defined by
\[
HL_n(\mathfrak{g})=H_n (CL_*(\mathfrak{g})), \;\; n\geq 0.
\]

\subsection{Homology via derived functors.}\label{free}
Let $X$ be a set and $M_X$ be the free magma on $X$ with a binary operation $[\;, \;]$. Denote by $\mathbb{K}(M_X)$ the free
vector space over the set $M_X$. In a natural way we can extend $[\;, \;]$
to a binary operation $[\;, \;]$ on $\mathbb{K}(M_X)$. Thus $\mathbb{K}(M_X)$ is the free algebra on $X$ (see e.g. \cite{Se}) . For any subset $S\subseteq \mathbb{K}(M_X)$, let $N^1(S)$ denote the
vector subspace of $\mathbb{K}(M_X)$ generated by $S$ and $\{[x, s], [s, x] \mid x\in M_X, s\in S \}$.
Let $\mathcal{N}(S)=\underset{{i\geq 1}}{\cup}N^{i}(S)$ where for $i > 1$, $N^i(S) = N^1(N^{i-1}(S))$. In particular, consider
$S_X= \{[x, [y, z]]-[[x,y], z]+[[x,z],y] \mid x, y, z\in M_X\}$ and denote
$ \mathbb{K}(M_X)/\mathcal{N}(S_X)$ by $\mathfrak{F}(X)$. In other words, $\mathfrak{F}(X)$ is the quotient of $\mathbb{K}(M_X)$ by the two-sided ideal generated by the subset $S_X$. Clearly $\mathfrak{F}(X)$ is a Leibniz algebra, called the free Leibniz algebra
on the set $X$. It is easy to see that the construction $X\mapsto \mathfrak{F}(X)$ defines a covariant functor
\[
\mathfrak{F}\colon \Set \to \Lb, 
\]
which is left adjoint to the natural forgetful functor $\mathfrak{U} \colon  \Lb\to \Set$, where $\Set$ denote the category of sets.

It is well known that every adjoint pair of functors induces a cotriple (see for example \cite[Chapter 1]{BaBe} or \cite[Chapter 2]{Ina97}).
Let $\mathbb{F}=(\mathbb{F}, \de, \tau)$ denote the cotriple in $\Lb$ defined by the adjunction $\mathfrak{F} \dashv
\mathfrak{U}$, that is, $\mathbb{F}=\mathfrak{F}\mathfrak{U}\colon \Lb\to \Lb$, $\tau\colon \mathbb{F}\to 1_{\Lb}$ is the counit and $\de =\mathfrak{F} u \mathfrak{U}\colon\mathbb{F}\to \mathbb{F}^2$, where $u\colon 1_{\Set}\to \mathfrak{U}\mathfrak{F}$ is the unit of the adjunction.
Then, given an endofunctor $\mathfrak{T}\colon\Lb\to \Lb$, one can consider derived functors $\mathcal{L}_n^{\mathbb{F}}\mathfrak{T} \colon \Lb\to \Lb$, $n\ge 0$, with respect to the cotriple $\mathbb{F}$ (see again \cite{BaBe}). In particular, Leibniz homology can be described in terms of the derived functors of the  abelianization functor $\mathfrak{Ab} \colon \Lb \to \Lb$, defined by $\mathfrak{Ab}({\mathfrak g})=\mathfrak{g}^{\ab}$.

\begin{theorem}\label{theorem} Let $\mathfrak{g}$ be a Leibniz algebra.
Then there is an isomorphism
$$
HL_{n+1}(\mathfrak{g})\cong \mathcal{L}_n^{\mathbb{F}}\mathfrak{Ab}(\mathfrak{g}), \;\; n\geq 0.
$$
\end{theorem}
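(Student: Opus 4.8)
The plan is to follow the classical Barr--Beck pattern and realise both sides as the two spectral sequences of a single first-quadrant bicomplex built from the cotriple resolution. Write $\mathbb{F}_\bullet\mathfrak{g}$ for the augmented simplicial Leibniz algebra with $\mathbb{F}_p\mathfrak{g}=\mathbb{F}^{p+1}\mathfrak{g}$, faces and degeneracies assembled from $\tau$ and $\de$, and augmentation $\tau\colon\mathbb{F}\mathfrak{g}\to\mathfrak{g}$; by definition ${}^{}\mathcal{L}_n^{\mathbb{F}}\mathfrak{Ab}(\mathfrak{g})=\pi_n\big(\mathfrak{Ab}(\mathbb{F}_\bullet\mathfrak{g})\big)$. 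The base of the shift is the identification $HL_1\cong\mathfrak{Ab}$, read off directly from $CL_\ast$ (the differential $d\colon\mathfrak{g}\otimes\mathfrak{g}\to\mathfrak{g}$ has image $[\mathfrak{g},\mathfrak{g}]$ and $d\colon\mathfrak{g}\to\mathbb{K}$ is zero), which is what pairs $HL_{n+1}$ with $\mathcal{L}_n$. I would then form the bicomplex $D_{p,q}=CL_q(\mathbb{F}_p\mathfrak{g})=(\mathbb{F}_p\mathfrak{g})^{\otimes q}$, with vertical differential the Leibniz boundary $d$ and horizontal differential the alternating face sum $\sum(-1)^i d_i$ of $\mathbb{F}_\bullet\mathfrak{g}$ (i.e. the functor $CL_q$ applied to the simplicial structure); these commute up to sign because $d$ is natural. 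Let $T_\ast=\operatorname{Tot}(D)$ and run the two spectral sequences.

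Taking homology first in the simplicial ($p$) direction, for fixed $q$ one must compute $\pi_\ast$ of the simplicial vector space $p\mapsto(\mathbb{F}_p\mathfrak{g})^{\otimes q}$. The essential observation is that the underlying augmented simplicial \emph{set} of $\mathbb{F}_\bullet\mathfrak{g}\to\mathfrak{g}$ is contractible, the extra degeneracy being supplied by the unit $u$ of the adjunction $\mathfrak{F}\dashv\mathfrak{U}$; hence the underlying simplicial vector space is homotopy equivalent to the discrete object $\mathfrak{g}$, so its Moore complex is acyclic with $\pi_\ast(\mathbb{F}_\bullet\mathfrak{g})=\mathfrak{g}$ concentrated in degree $0$. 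By Eilenberg--Zilber together with the K\"unneth theorem over the field $\mathbb{K}$, the diagonal $(\mathbb{F}_\bullet\mathfrak{g})^{\otimes q}$ then has $\pi_\ast=\mathfrak{g}^{\otimes q}=CL_q(\mathfrak{g})$ in degree $0$ and nothing else. Thus ${}^{II}E^1$ is concentrated in the column $p=0$, where it equals $CL_q(\mathfrak{g})$; the remaining Leibniz differential gives ${}^{II}E^2_{0,q}=HL_q(\mathfrak{g})$ and the sequence collapses, so $H_n(T_\ast)\cong HL_n(\mathfrak{g})$.

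Taking homology first in the vertical (Leibniz) direction gives ${}^{I}E^1_{p,q}=HL_q(\mathbb{F}_p\mathfrak{g})$, the Leibniz homology of a \emph{free} Leibniz algebra. Here I would invoke the acyclicity of free Leibniz algebras, $HL_q(\text{free})=0$ for $q\ge2$, together with $HL_0=\mathbb{K}$ and $HL_1=(\;)^{\ab}$. Only the rows $q=0,1$ survive: the $q=0$ row is the constant complex on $\mathbb{K}$ and contributes $\mathbb{K}$ in total degree $0$, while the $q=1$ row is precisely the chain complex of $\mathfrak{Ab}(\mathbb{F}_\bullet\mathfrak{g})$, whose homology is $\mathcal{L}_\ast^{\mathbb{F}}\mathfrak{Ab}(\mathfrak{g})$. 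With only two nonzero rows all higher differentials vanish, so for $n\ge1$ one gets $H_n(T_\ast)\cong{}^{I}E^2_{n-1,1}=\mathcal{L}_{n-1}^{\mathbb{F}}\mathfrak{Ab}(\mathfrak{g})$. Comparing with the value obtained from the other spectral sequence yields $HL_n(\mathfrak{g})\cong\mathcal{L}_{n-1}^{\mathbb{F}}\mathfrak{Ab}(\mathfrak{g})$ for $n\ge1$, which is the assertion after reindexing.

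The main obstacle is the acyclicity used in the first spectral sequence: the entire argument rests on $HL_q(\mathbb{F}^{p+1}\mathfrak{g})=0$ for $q\ge2$, so one must either quote the computation of the Leibniz homology of free Leibniz algebras (as in the work of Loday and Pirashvili \cite{LoPi}) or prove it directly, for instance by producing a short functorial resolution witnessing that free Leibniz algebras are homologically one-dimensional. The remaining technical points---the set-level contractibility of the cotriple resolution and the passage through Eilenberg--Zilber---are standard once this acyclicity is available, and the collapse of both spectral sequences over a field leaves no extension problems.
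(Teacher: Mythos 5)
Your proposal is correct and follows essentially the same route as the paper: both form the bicomplex $CL_*(\mathfrak{f}_*)$ from the cotriple resolution, identify the total homology with $HL_*(\mathfrak{g})$ via asphericity of $\mathfrak{f}_*^{\otimes n}\to\mathfrak{g}^{\otimes n}$ (you prove this by the extra-degeneracy/Eilenberg--Zilber/K\"unneth argument, which is exactly the content of the lemmas the paper cites), and then collapse the spectral sequence $E^1_{pq}=HL_q(\mathfrak{f}_p)$ using the vanishing of higher Leibniz homology of free Leibniz algebras. The acyclicity of free algebras that you flag as the main obstacle is likewise assumed without proof in the paper (it is the known Loday--Pirashvili computation), so your treatment matches the paper's in both structure and level of rigor.
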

\begin{proof} Let $\mathfrak{f}_*$ be the $\mathbb{F}$-cotriple simplicial resolution of $\mathfrak{g}$, that is, $\mathfrak{f}_* = (\mathfrak{f}_*(\mathfrak{g}), d^n_i, s^n_i )$ is the simplicial Leibniz algebra with
\begin{align*}
& {\mathfrak{f}_n(\mathfrak{g})}={\mathbb{F}^{n+1}}(\mathfrak{g})={\mathbb{F}}\big({\mathbb{F}^n}(\mathfrak{g})\big),\\
& d^n_i={\mathbb{F}^i}(\tau_{{\mathbb{F}^{n-i}}}),\quad
s^n_i={\mathbb{F}^i}(\de_{{\mathbb{F}^{n-i}}}),\quad 0\le i\le n, \quad n\geq 0.
\end{align*}

Applying the functor $CL_*$ to $\mathfrak{f}_*$ dimension-wise, we get the following bicomplex:
\[
\CD
  & & \vdots   & & \vdots  & & \vdots \\
  & & @V dVV @V-dVV @VdVV \\
  & & \mathfrak{f}_0^{\otimes 2} @<{}<< \mathfrak{f}_1^{\otimes 2} @<{}<< \mathfrak{f}_2^{\otimes 2} @<{}<< \cdots \\
  & & @V dVV @V-dVV @VdVV \\
  & & \mathfrak{f}_0 @<{}<< \mathfrak{f}_1 @<{}<< \mathfrak{f}_2 @<{}<< \cdots \\
  & & @V dVV @V-dVV @VdVV \\
  & & \mathbb{K} @<{}<< \mathbb{K} @<{} << \mathbb{K} @<{}<< \cdots ,
\endCD
\]
where the horizontal differentials are obtained by alternating sums of face homomorphisms. Since $\mathbb{K}$ is a field,
$\mathfrak{f}_*^{\otimes n}\to \mathfrak{g}^{\otimes n}$ is an aspherical augmented  simplicial vector space (see for example
\cite[Lemma 2.3]{DIKL12} or \cite[Lemma 2.1]{DIL10}), for each $n\geq 1$. Therefore, we have an isomorphism:
\[
HL_n(\mathfrak{g})\cong H_n(Tot (CL_*(\mathfrak{f}_*))), \;\; n\geq 0.
\]
On the other hand we have the following spectral sequence:
$$
E^1_{pq}=H_q(\mathfrak{f}_p)\Rightarrow H_{p+q}(Tot (CL_*(\mathfrak{f}_*))) .
$$
Since $\mathfrak{f}_n$ is a free Leibniz algebra for each $n\geq 0$, we obtain:
\[
E^1_{pq}=\left\lbrace
\begin{matrix}
\mathbb{K}\qquad  & \text{for} & q = 0\;, \\
\mathfrak{f}_p/[\mathfrak{f}_p, \mathfrak{f}_p] \qquad& \text{for} & q = 1\;,\\
0 \qquad& \text{for} & q > 1\;.
\end{matrix}
\right.
\]
This spectral sequence degenerates at $E^2$ and
\[
E^2_{pq}=\left\lbrace
\begin{matrix}
\mathbb{K}\qquad  & \text{for} & q = 0& \text{and} & p = 0\;, \\
0\qquad  & \text{for} & q = 0& \text{and} & p > 0\;, \\
H_p(\mathfrak{Ab}(\mathfrak{f}_*)) \qquad& \text{for} & q = 1\;,\\
0 \qquad& \text{for} & q > 1\;.
\end{matrix}
\right.
\]
Thus, the spectral sequence argument completes the proof.
\end{proof}

\begin{remark}\label{remark1}
The $\mathbb{F}$-cotriple simplicial resolution of a Leibniz algebra $\mathfrak{g}$ is a free (projective) simplicial resolution of $\mathfrak{g}$ and by \cite[5.3]{BaBe}, if $\mathfrak{f}_*$ is any of them, then there are natural isomorphisms
\[
HL_n(\mathfrak{g})\cong \pi_{n-1}(\mathfrak{Ab}(\mathfrak{f}_*)), \quad n\geq 1.
\]
\end{remark}

\subsection{Hopf formulas.}  Theorem \ref{theorem} enables us to prove Hopf formulas for the (higher) homology of Leibniz algebras, pursuing the line and  technique developed in \cite{DIP05} for the description of higher group homology via Hopf formulas (see \cite{BroEll88}).
In this respect herewith we state two theorems without proofs. In fact they are particular cases of \cite[Theorem 15]{CaKhLa} describing homology of Leibniz $n$-algebras via Hopf formulas. Note also that, of course, these results agree with the categorical approach to the problem given in \cite{EvGrTim} for semi-abelian homology.

An extension of Leibniz algebras $0 \to \mathfrak{r}\to \mathfrak{f}\to \mathfrak{g}\to 0$ is said to be a free presentation of $\mathfrak{g}$,
if $\mathfrak{f}$ is a free Leibniz algebra over a set.

\begin{theorem}\label{theorem_Hopf2} Let $0 \to \mathfrak{r}\to \mathfrak{f}\to \mathfrak{g}\to 0$ be a free presentation of
a Leibniz algebra $\mathfrak{g}$. Then there is an isomorphism:
\[
HL_2(\mathfrak{g})\cong \frac{\mathfrak{r}\cap [\mathfrak{f}, \mathfrak{f}]}{[\mathfrak{r}, \mathfrak{f}]}.
\]
\end{theorem}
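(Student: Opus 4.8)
The plan is to derive the formula from the derived-functor description of Leibniz homology in Theorem \ref{theorem} (equivalently, the simplicial description in Remark \ref{remark1}) together with a five-term exact sequence attached to the given extension. First I would record the acyclicity of free Leibniz algebras: a free $\mathfrak{f}$ is $\mathbb{F}$-projective, so the constant augmented simplicial object already serves as an $\mathbb{F}$-resolution and $\mathcal{L}_n^{\mathbb{F}}\mathfrak{Ab}(\mathfrak{f})=0$ for $n\geq 1$; by Theorem \ref{theorem} this gives $HL_m(\mathfrak{f})=0$ for all $m\geq 2$. (The same vanishing is exactly what makes the $E^1$-page collapse for $q>1$ in the proof of Theorem \ref{theorem}.)

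The core step is to produce, for the extension $0 \to \mathfrak{r}\to \mathfrak{f}\to \mathfrak{g}\to 0$, the five-term exact sequence
\[
HL_2(\mathfrak{f}) \to HL_2(\mathfrak{g}) \to \mathfrak{r}/[\mathfrak{r},\mathfrak{f}] \to HL_1(\mathfrak{f}) \to HL_1(\mathfrak{g}) \to 0,
\]
where $HL_1(-)=(-)^{\ab}$ and the third term is the abelianization of $\mathfrak{r}$ relative to the action of $\mathfrak{f}$. This is the Leibniz analogue of the Stallings--Stammbach sequence; I would obtain it by the standard argument, choosing a free simplicial resolution $\mathfrak{f}_*\to\mathfrak{g}$ with $\mathfrak{f}_0=\mathfrak{f}$ and extracting the low-degree part of the Moore complex of $\mathfrak{Ab}(\mathfrak{f}_*)$, which by Remark \ref{remark1} computes $HL_2(\mathfrak{g})$ and $HL_1(\mathfrak{g})$. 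The point genuinely sensitive to the Leibniz setting, and the one I would treat most carefully, is the identification of the relative term: because the bracket is not antisymmetric the commutator $[\mathfrak{r},\mathfrak{f}]$ must be taken \emph{two-sided}, i.e.\ spanned by all $[r,x]$ and $[x,r]$ with $r\in\mathfrak{r}$, $x\in\mathfrak{f}$. Since $\mathfrak{r}$ is an ideal, this subspace lies inside $\mathfrak{r}\cap[\mathfrak{f},\mathfrak{f}]$, so the displayed quotient and the forthcoming kernel computation make sense.

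With the sequence in hand the conclusion is a short diagram chase. As $\mathfrak{f}$ is free, $HL_2(\mathfrak{f})=0$ by the first step, so $HL_2(\mathfrak{g})\to \mathfrak{r}/[\mathfrak{r},\mathfrak{f}]$ is injective and, by exactness, its image is the kernel of the map $\mathfrak{r}/[\mathfrak{r},\mathfrak{f}]\to \mathfrak{f}/[\mathfrak{f},\mathfrak{f}]$ induced by the inclusion $\mathfrak{r}\hookrightarrow\mathfrak{f}$. An element $r+[\mathfrak{r},\mathfrak{f}]$ lies in this kernel precisely when $r\in[\mathfrak{f},\mathfrak{f}]$, that is, when $r\in\mathfrak{r}\cap[\mathfrak{f},\mathfrak{f}]$; hence the kernel equals $(\mathfrak{r}\cap[\mathfrak{f},\mathfrak{f}])/[\mathfrak{r},\mathfrak{f}]$ and the asserted isomorphism follows.

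I expect the main obstacle to be the honest construction of the five-term exact sequence and, in particular, verifying that the connecting homomorphism lands in $\mathfrak{r}/[\mathfrak{r},\mathfrak{f}]$ with the two-sided commutator; the acyclicity of free algebras and the final kernel computation are routine by comparison. An alternative that bypasses the five-term machinery is to extend the presentation to a free simplicial resolution and compute $\pi_1$ of the Moore complex of $\mathfrak{Ab}(\mathfrak{f}_*)$ directly, using that $\pi_1$ depends only on the $2$-truncation; this replaces the long-exact-sequence bookkeeping by an explicit but more hands-on chase that arrives at the same subquotient of $\mathfrak{r}$.
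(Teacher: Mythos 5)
Your outline is sound, but note that there is no proof in the paper to compare it with line by line: Theorem \ref{theorem_Hopf2} is stated \emph{without proof}, as a special case of \cite[Theorem 15]{CaKhLa}, with a pointer to the technique of \cite{DIP05}. Your argument --- the classical Stallings--Stammbach/Hopf argument transported to Leibniz algebras via Theorem \ref{theorem} and Remark \ref{remark1} --- is therefore a legitimate self-contained substitute, and it is in fact exactly the toolkit the paper itself deploys one section later for the $HL_3$ analogue (Theorem \ref{theorem2}): a free simplicial resolution $\mathfrak{f}_*\to\mathfrak{g}$ with $\mathfrak{f}_0=\mathfrak{f}$ and $\Ker d^0_0=\mathfrak{r}$, the short exact sequence of simplicial Leibniz algebras $0\to[\mathfrak{f}_*,\mathfrak{f}_*]\to\mathfrak{f}_*\to\mathfrak{Ab}(\mathfrak{f}_*)\to 0$, and its long exact homotopy sequence. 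Two refinements would tighten your plan. First, the five-term sequence for an \emph{arbitrary} extension is both more than you need and the step most at risk of circularity (in the paper such a sequence appears only as Proposition \ref{proposition2}, whose proof passes through Theorem \ref{theorem1} and hence through the Hopf formula itself); your closing ``alternative'' is really the primary route. From the homotopy sequence above, $\pi_1(\mathfrak{f}_*)=0$ and $\pi_0(\mathfrak{f}_*)\cong\mathfrak{g}$ give exactness of $0\to HL_2(\mathfrak{g})\to \pi_0([\mathfrak{f}_*,\mathfrak{f}_*])\to \mathfrak{g}$, so the whole theorem reduces to the single identification $\pi_0([\mathfrak{f}_*,\mathfrak{f}_*])\cong [\mathfrak{f},\mathfrak{f}]/[\mathfrak{r},\mathfrak{f}]$, i.e.\ to showing $(d^1_0-d^1_1)\bigl([\mathfrak{f}_1,\mathfrak{f}_1]\bigr)=[\mathfrak{r},\mathfrak{f}]$ with the two-sided commutator; this is precisely where the point you flag actually lives, and both inclusions are short: writing $d^1_0y=d^1_1y+r$ and $d^1_0y'=d^1_1y'+r'$ with $r,r'\in\Img(d^1_0-d^1_1)=\mathfrak{r}$ (exactness of the resolution in degree $0$) shows $(d^1_0-d^1_1)[y,y']\in[\mathfrak{r},\mathfrak{f}]$, while $[r,x]=(d^1_0-d^1_1)[y,s^0_0x]$ and $[x,r]=(d^1_0-d^1_1)[s^0_0x,y]$, for any $y$ with $(d^1_0-d^1_1)y=r$, give the converse. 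Second, your step 1 is correct but mildly redundant: the vanishing of $HL_n$, $n\geq 2$, on free Leibniz algebras is an \emph{input} to the proof of Theorem \ref{theorem} (it is what forces $E^1_{pq}=0$ for $q>1$), so it is cleaner to quote it directly (it goes back to \cite{LoPi}) than to re-derive it from Theorem \ref{theorem}, although the re-derivation is not logically circular given that the paper has established that theorem.
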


\begin{theorem}\label{theorem_Hopf3} Let $\mathfrak{r}$ and $\mathfrak{s}$ be ideals of a free Leibniz algebra $\mathfrak{f}$.
Suppose that $\mathfrak{f}/\mathfrak{r}$ and $\mathfrak{f}/\mathfrak{s}$ are free Leibniz algebras, and that
$\mathfrak{g}=\mathfrak{f}/(\mathfrak{r}+\mathfrak{s})$. Then there is an isomorphism:
\[
HL_3(\mathfrak{g})\cong \frac{\mathfrak{r}\cap\mathfrak{s}\cap [\mathfrak{f}, \mathfrak{f}]}{[\mathfrak{r}, \mathfrak{s}]+
[\mathfrak{r}\cap \mathfrak{s}, \mathfrak{f}]}.
\]
\end{theorem}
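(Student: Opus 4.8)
The plan is to follow the simplicial strategy already used in the proof of Theorem \ref{theorem} and Remark \ref{remark1}, combined with the double-presentation (Brown--Ellis type) technique, and to reduce the whole computation to the second Hopf formula of Theorem \ref{theorem_Hopf2}. The first thing I would record is a vanishing principle. Since $\mathfrak{f}/\mathfrak{r}$ and $\mathfrak{f}/\mathfrak{s}$ are free Leibniz algebras, they have $HL_2=0$, so applying Theorem \ref{theorem_Hopf2} to the free presentations $0\to\mathfrak{r}\to\mathfrak{f}\to\mathfrak{f}/\mathfrak{r}\to0$ and $0\to\mathfrak{s}\to\mathfrak{f}\to\mathfrak{f}/\mathfrak{s}\to0$ yields the two identities
\[
\mathfrak{r}\cap[\mathfrak{f},\mathfrak{f}]=[\mathfrak{r},\mathfrak{f}],\qquad \mathfrak{s}\cap[\mathfrak{f},\mathfrak{f}]=[\mathfrak{s},\mathfrak{f}].
\]
These will be the algebraic engine behind the final identification. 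Here and below $[\mathfrak{a},\mathfrak{b}]$ denotes the subspace spanned by brackets in \emph{both} orders, a convention that must be fixed at the outset because the Leibniz bracket is not antisymmetric.

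Next I would set up the reduction. Put $\mathfrak{p}=\mathfrak{f}/\mathfrak{r}$, which is free, and $\ol{\mathfrak{s}}=(\mathfrak{r}+\mathfrak{s})/\mathfrak{r}\cong\mathfrak{s}/(\mathfrak{r}\cap\mathfrak{s})$, an ideal of $\mathfrak{p}$ with $\mathfrak{p}/\ol{\mathfrak{s}}\cong\mathfrak{g}$. Thus $0\to\ol{\mathfrak{s}}\to\mathfrak{p}\to\mathfrak{g}\to0$ is a free presentation of $\mathfrak{g}$, from which Theorem \ref{theorem_Hopf2} already computes $HL_2(\mathfrak{g})$. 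To reach $HL_3$ I would build, from the commutative square of \emph{free} algebras $\mathfrak{f}\twoheadrightarrow\mathfrak{f}/\mathfrak{r}$ and $\mathfrak{f}\twoheadrightarrow\mathfrak{f}/\mathfrak{s}$ lying over $\mathfrak{g}$, a free bisimplicial resolution of $\mathfrak{g}$ by applying the $\mathbb{F}$-cotriple in each direction exactly as in Theorem \ref{theorem}, abelianize it, and run the homology spectral sequence of the associated bicomplex. The diagonal is a free simplicial resolution of $\mathfrak{g}$, so by Remark \ref{remark1} it computes $HL_{*}(\mathfrak{g})$; and because the three corner algebras $\mathfrak{f}$, $\mathfrak{f}/\mathfrak{r}$, $\mathfrak{f}/\mathfrak{s}$ are free, their higher $HL$ vanish, so the spectral sequence collapses and presents $HL_3(\mathfrak{g})$ as a single subquotient of the Moore complex of the abelianized resolution in degree two.

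The heart of the argument — and the step I expect to be the main obstacle — is the explicit identification of that subquotient with
\[
\frac{\mathfrak{r}\cap\mathfrak{s}\cap[\mathfrak{f},\mathfrak{f}]}{[\mathfrak{r},\mathfrak{s}]+[\mathfrak{r}\cap\mathfrak{s},\mathfrak{f}]}.
\]
Concretely, I would show that the degree-two cycles correspond to $\mathfrak{r}\cap\mathfrak{s}\cap[\mathfrak{f},\mathfrak{f}]$ (the intersection appears because an element must die along both the $\mathfrak{r}$- and the $\mathfrak{s}$-direction, and the factor $[\mathfrak{f},\mathfrak{f}]$ because abelianization discards everything outside the commutator), while the image of the degree-two boundary is exactly $[\mathfrak{r},\mathfrak{s}]+[\mathfrak{r}\cap\mathfrak{s},\mathfrak{f}]$. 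Establishing this last equality is the delicate part: one inclusion is a direct evaluation of the boundary on the relevant generators, and the reverse inclusion requires rewriting an arbitrary element of $\mathfrak{r}\cap\mathfrak{s}\cap[\mathfrak{f},\mathfrak{f}]$ modulo boundaries, repeatedly invoking the Leibniz identity together with the two vanishing identities recorded above to absorb terms of type $[\mathfrak{r}\cap\mathfrak{s},\mathfrak{f}]$ and to collapse the ambiguous ``both orders'' brackets into $[\mathfrak{r},\mathfrak{s}]$.

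The remaining checks are routine in spirit: verifying that the bisimplicial vector spaces obtained after applying $CL_*$ degreewise are aspherical (as in the proof of Theorem \ref{theorem}, using that $\mathbb{K}$ is a field) so that the spectral sequence genuinely converges to $HL_*(\mathfrak{g})$, and confirming naturality of the resulting isomorphism. A more economical alternative would bypass the bicomplex and instead feed the extension $0\to\ol{\mathfrak{s}}\to\mathfrak{p}\to\mathfrak{g}\to0$ into a low-dimensional exact sequence in Leibniz homology; since $\mathfrak{p}$ is free this would force $HL_3(\mathfrak{g})$ to be a relative second-homology term, which one then translates back through $\mathfrak{p}=\mathfrak{f}/\mathfrak{r}$ and $\ol{\mathfrak{s}}\cong\mathfrak{s}/(\mathfrak{r}\cap\mathfrak{s})$ using the same vanishing identities. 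Either way, the non-antisymmetry of the bracket is precisely what makes the denominator bookkeeping the genuinely non-formal part of the proof.
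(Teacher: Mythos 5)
A preliminary remark: the paper itself gives no proof of Theorem \ref{theorem_Hopf3}; it is stated without proof as a particular case of \cite[Theorem 15]{CaKhLa}, whose method (cotriple/simplicial resolutions and Hopf-type formulas in the line of \cite{BroEll88, DIP05}) is exactly the family of techniques you invoke, so your strategy is aimed at the right target. Your preliminary steps are also correct: $HL_2$ of a free Leibniz algebra vanishes, so Theorem \ref{theorem_Hopf2} applied to $0\to\mathfrak{r}\to\mathfrak{f}\to\mathfrak{f}/\mathfrak{r}\to 0$ and $0\to\mathfrak{s}\to\mathfrak{f}\to\mathfrak{f}/\mathfrak{s}\to 0$ does give $\mathfrak{r}\cap[\mathfrak{f},\mathfrak{f}]=[\mathfrak{r},\mathfrak{f}]$ and $\mathfrak{s}\cap[\mathfrak{f},\mathfrak{f}]=[\mathfrak{s},\mathfrak{f}]$. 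The genuine gap is exactly where you flag ``the delicate part'': the identification of the degree-two cycles of the abelianized resolution with $\mathfrak{r}\cap\mathfrak{s}\cap[\mathfrak{f},\mathfrak{f}]$ and, above all, of the degree-two boundaries with $[\mathfrak{r},\mathfrak{s}]+[\mathfrak{r}\cap\mathfrak{s},\mathfrak{f}]$ is asserted, not carried out --- and that identification \emph{is} the theorem; ``repeatedly invoking the Leibniz identity'' is not an argument. The scaffolding around it is also unverified: you never exhibit the bisimplicial object in low degrees, you do not check that its diagonal is a free simplicial resolution of $\mathfrak{g}$ (needed to invoke Remark \ref{remark1}), and freeness of the three corner algebras does not by itself collapse the spectral sequence of the abelianized bicomplex --- one must argue on homotopy groups as in the proof of Theorem \ref{theorem}. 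As it stands the proposal is a plausible plan, not a proof.

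A further caution about your ``more economical alternative.'' Inside this paper that route can indeed be made precise: with $\mathfrak{p}=\mathfrak{f}/\mathfrak{r}$ and $\overline{\mathfrak{s}}=(\mathfrak{r}+\mathfrak{s})/\mathfrak{r}$, Theorem \ref{theorem2} gives $HL_3(\mathfrak{g})\cong\Ker(\theta_{\overline{\mathfrak{s}},\mathfrak{p}})$; since $\mathfrak{f}/\mathfrak{s}$ is free the surjection $\mathfrak{f}\to\mathfrak{f}/\mathfrak{s}$ splits, so Lemma \ref{lemma2} together with Proposition \ref{proposition1} identifies $\mathfrak{s}\curlywedge\mathfrak{f}$ with $[\mathfrak{s},\mathfrak{f}]$, and the exact sequence (\ref{sequence1}) then yields $\overline{\mathfrak{s}}\curlywedge\mathfrak{p}\cong[\mathfrak{s},\mathfrak{f}]/\bigl([\mathfrak{r},\mathfrak{s}]+[\mathfrak{r}\cap\mathfrak{s},\mathfrak{f}]\bigr)$, whence the formula after rewriting the numerator via $[\mathfrak{s},\mathfrak{f}]=\mathfrak{s}\cap[\mathfrak{f},\mathfrak{f}]$. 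But this shortcut is circular in the paper's logical structure: the proof of Theorem \ref{theorem2} already uses Theorem \ref{theorem_Hopf3} itself (it is precisely the displayed ``Hopf formula'' equality for $\Ker d^2_0\cap\Ker d^2_1\cap[\mathfrak{f}_2,\mathfrak{f}_2]$). So either you carry the bisimplicial computation through honestly --- the part your proposal leaves open --- or you first reprove Theorem \ref{theorem2} without appealing to the very formula you are trying to establish.
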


\section{Non-abelian tensor and exterior product of Leibniz algebras}\label{S:Ten}

\subsection{Leibniz actions and crossed modules.}
\begin{definition}
Let $\mm$ and $\nn$ be Leibniz algebras. A \emph{Leibniz action} of $\mm$ on
$\nn$ is a couple of bilinear maps $\mm \times \nn \to \nn$, $(m, n)\mapsto \;^mn$, and
$\nn \times \mm \to \nn$, $(n, m)\mapsto n^m$, satisfying the following axioms:
\begin{align*}
^{[m, m']}n&=\; ^m(^{m'}n)+(^mn)^{m'},& ^m[n, n']&=[^mn, n']-[^mn', n],\\
n^{[m, m']}&=(n^m)^{m'}-(n^{m'})^{m}, & [n, n']^m&=[n^m, n']+[n, n'^m],\\
^m(^{m'}n)&=-^m(n^{m'}), & [n, \;^mn']&=-[n, n'^{m}],
\end{align*}
for each $m, m'\in \mm$, $n, n'\in \nn$. For example, if $\mm$  is a subalgebra of a Leibniz algebra $\mathfrak{g}$  (maybe
  $\mathfrak{g}= \mathfrak{m}$) and $\nn$ is an ideal of $\mathfrak{g}$, then Leibniz bracket in $\mathfrak{g}$ yields a Leibniz action of $\mm$ on $\nn$.
\end{definition}


\begin{definition}
A \emph{Leibniz crossed module} $(\mm, \Gg, \eta)$ is a homomorphism of Leibniz algebras $\eta \colon \mathfrak{m} \to \mathfrak{g}$ together with an action of $\mathfrak{g}$ on $\mathfrak{m}$ such that
\begin{align*}
	& \eta(^xm)=[g,\eta(m)],   \qquad \eta(m^x)=[\eta(m),x],\\
	& ^{\eta(m_1)}m_2=[m_1,m_2]=m_1^{\eta (m_2)},
\end{align*}
where $x \in \Gg$ and $m, m_1, m_2 \in \mm$.
\end{definition}

\begin{example}\label{E:CM}
	Let $\mathfrak{a}$ be an ideal of a Leibniz algebra $\Gg$. Then the inclusion $i \colon \Aa \to \Gg$ is a crossed module where the action of $\Gg$ on $\Aa$ is given by the bracket in $\Gg$. In particular, a Leibniz algebra can be seen as a crossed module by $1_{\Gg} \colon \Gg \to \Gg$.
\end{example}

\subsection{Non-abelian tensor product.}\label{S:tensor} Let $\mathfrak{m}$ and $\mathfrak{n}$ be Leibniz algebras with mutual actions on one another.
The \emph{non-abelian tensor product} of $\mathfrak{m}$ and $\mathfrak{n}$, denoted by $\mathfrak{m}\star \mathfrak{n}$, is defined in \cite{Gn99} to be
the Leibniz algebra generated by the symbols $m*n$ and $n*m$, for all $m\in \mathfrak{m}$ and $n\in \mathfrak{n}$, subject to the
following relations:
\begin{align*}
(1\textrm{a})\quad&k(m*n)=km*n=m*kn, & (1\textrm{b})\quad&k(n*m)=kn*m=n*km,\\
(2\textrm{a})\quad&(m+m')*n=m*n+m'*n, &  (2\textrm{b})\quad&(n+n')*m=n*m+n'*m,\\
(2\textrm{c})\quad&m*(n+n')=m*n+m*n', & (2\textrm{d})\quad&n*(m+m')=n*m+n*m',\\
(3\textrm{a})\quad&m*[n, n']=m^n*n'-m^{n'}*n, & (3\textrm{b})\quad&n*[m, m']=n^m*m'-n^{m'}*m,\\
(3\textrm{c})\quad&[m, m']*n=\;^mn*m'-m*n^{m'}, & (3\textrm{d})\quad&[n, n']*m=\;^nm*n'-n*m^{n'},\\
(4\textrm{a})\quad&m*\;^{m'}n=-m*n^{m'}, & (4\textrm{b})\quad&n*\;^{n'}m=-n*m^{n'},\\
(5\textrm{a})\quad&m^n*\;^{m'}n'=[m*n, m'*n']=\;^mn*m'^{n'}, & (5\textrm{b})\quad&^nm*n'^{m'}=[n*m, n'*m']=n^m*\;^{n'}m',\\
(5\textrm{c})\quad&m^n*n'^{m'}=[m*n, n'*m']=\;^mn*\;^{n'}m', & (5\textrm{d})\quad&^nm*\;^{m'}n'=[n*m, m'*n']=n^m* m'^{n'},
\end{align*}
for each $k\in \mathbb{K}$, $m, m'\in \mathfrak{m}$, $n, n'\in \mathfrak{n}$.

There are induced homomorphisms of Leibniz algebras $\tau_{\mm} \colon \mm \star \nn \to \mm$ and $\tau_{\nn} \colon \mm \star \nn \to \nn$ where $\tau_{\mm}(m * n) = m^n$, $\tau_{\mm}(n * m) = \;^nm$, $\tau_{\nn}(m * n) = \;^mn$ and $\tau_{\nn}(n * m) = n^m$.


\subsection{Non-abelian exterior product.}\label{exterior}
Let us consider two Leibniz crossed modules $\eta \colon \mm \to \Gg$ and $\mu \colon \nn \to \Gg$. Then there are induced actions of $\mm$ and $\nn$ on each other via the action of $\Gg$. Therefore, we can consider the non-abelian tensor product $\mm \star \nn$. We define $\mm \square \nn$ as the vector subspace of $\mm \star \nn$ generated by the elements $m * n' - n * m'$ such that $\eta(m) = \mu(n)$ and $\eta(m') = \mu(n')$.
\begin{proposition}\label{P:cen}
	The vector subspace $\mm \square \nn$ is contained in the center of $\mm \star \nn$, so in particular it is an ideal of $\mm \star \nn$.
\end{proposition}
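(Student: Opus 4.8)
The plan is to reduce everything to a computation on generators. By definition $\mm \square \nn$ is spanned by the elements $z = m * n' - n * m'$ with $\eta(m) = \mu(n)$ and $\eta(m') = \mu(n')$, while $\mm \star \nn$ is generated by the symbols $p * q$ and $q * p$ with $p \in \mm$, $q \in \nn$. Since the Leibniz bracket is bilinear, to prove that $z$ lies in the center of $\mm \star \nn$ it suffices to check that $[z, w] = 0 = [w, z]$ whenever $w$ runs through these generators. Once centrality is established, the fact that $\mm \square \nn$ is an ideal is immediate: for a central element $z$ one has $[z, w] = 0 \in \mm \square \nn$ and $[w, z] = 0 \in \mm \square \nn$ for every $w$, so the defining condition of an ideal holds trivially.

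The heart of the argument is a pair of identities, one in $\mm$ and one in $\nn$. Recall that the actions of $\mm$ and $\nn$ on each other are induced through $\Gg$, so that $m^{n'} = m^{\mu(n')}$, ${}^{n}m' = {}^{\mu(n)}m'$, ${}^{m}n' = {}^{\eta(m)}n'$ and $n^{m'} = n^{\eta(m')}$. Applying the crossed-module identity ${}^{\eta(a)}b = [a,b] = a^{\eta(b)}$ for $\eta \colon \mm \to \Gg$ together with the hypotheses $\mu(n') = \eta(m')$ and $\mu(n) = \eta(m)$, I obtain
$$
m^{n'} = m^{\eta(m')} = [m, m'] = {}^{\eta(m)}m' = {}^{n}m'
$$
as elements of $\mm$. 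Symmetrically, the crossed-module identity for $\mu \colon \nn \to \Gg$ gives
$$
{}^{m}n' = {}^{\mu(n)}n' = [n, n'] = n^{\mu(n')} = n^{m'}
$$
as elements of $\nn$. These two equalities are precisely what is needed to make the two summands of $z$ agree after bracketing.

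With these in hand the verification is a bookkeeping exercise using relations (5a)--(5d). For $w = p * q$ one computes $[m * n', p * q] = m^{n'} * {}^{p}q$ by (5a) and $[n * m', p * q] = {}^{n}m' * {}^{p}q$ by (5d), which coincide since $m^{n'} = {}^{n}m'$; hence $[z, p * q] = 0$. For $w = q * p$, relations (5c) and (5b) give $[m * n', q * p] = m^{n'} * q^{p}$ and $[n * m', q * p] = {}^{n}m' * q^{p}$, again equal. The opposite brackets are handled identically: (5a) and (5c) yield $[p * q, m * n'] = p^{q} * {}^{m}n'$ and $[p * q, n * m'] = p^{q} * n^{m'}$, equal because ${}^{m}n' = n^{m'}$, while (5d) and (5b) treat $[q * p, z]$ in the same fashion. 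Thus each of the four brackets $[z, p * q]$, $[z, q * p]$, $[p * q, z]$ and $[q * p, z]$ vanishes, so $z$ is central. The only real obstacle is organisational: one must select the correct relation among (5a)--(5d) according to the $\mm * \nn$ or $\nn * \mm$ type of each of the two generators being multiplied, but no genuine difficulty arises beyond this bookkeeping.
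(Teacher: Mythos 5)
Your proof is correct and follows essentially the same route as the paper's: both reduce centrality to a computation on generators via relations (5a)--(5d), and both hinge on the identities $m^{n'} = [m,m'] = {}^{n}m'$ and ${}^{m}n' = [n,n'] = n^{m'}$ obtained from the crossed-module (Peiffer) conditions combined with $\eta(m)=\mu(n)$ and $\eta(m')=\mu(n')$. The only difference is thoroughness: the paper displays just one of the four bracket computations and declares the rest routine, whereas you carry out all four cases explicitly.
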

\begin{proof}
	Everything can be readily checked by using defining relations  (5\textrm{a})-(5\textrm{d}) of $\mm \star \nn$. For instance, for any $m''\in \mm$ and $n''\in \nn$, we have
	\begin{align*}
	[m * n' - n * m', m'' * n''] &= m^{n'} * \; ^{m''}n'' - \; ^nm' * \; ^{m''}n'' \\
	{} &= m^{\mu(n')} * \; ^{m''}n'' - \; ^{\mu(n)}m' * \; ^{m''}n'' \\
	{} &= m^{\eta(m)} * \; ^{m''}n'' - \; ^{\eta(m)}m' * \; ^{m''}n'' \\
	{} &= [m, m'] * \; ^{m''}n'' - [m, m'] * \; ^{m''}n'' \\
	{} &= 0.
	\end{align*}
\end{proof}

\begin{definition}
	Let $\eta \colon \mm \to \Gg$ and $\mu \colon \nn \to \Gg$ be two Leibniz crossed modules in the previous setting. We define the \emph{non-abelian exterior product} $\mm \curlywedge \nn$ of $\mm$ and $\nn$ by
	\[
	\mm \curlywedge \nn = \dfrac{\mm \star \nn} {\mm \square \nn}.
	\]
	The cosets of $m * n$ and $n * m$ will be denoted by $m \curlywedge n$ and $n \curlywedge m$, respectively.
\end{definition}

\begin{remark}\label{R: ring} Definitions of the non-abelian tensor and exterior products do not require $\mathbb{K}$ to be necessarily a field.
It is clear that one can do the same for a commutative ring with identity.
\end{remark}

There is an epimorphism of Leibniz algebras $\pi\colon \mathfrak{m}\star \mathfrak{n} \to \mathfrak{m}\curlywedge \mathfrak{n}$ sending $m * n$ and $n * m$ to $m \curlywedge n$ and $n \curlywedge m$, respectively.

To avoid any confusion, let us note that, given a Leibniz algebra $\mathfrak{g}$, for each $x,y\in \mathfrak{g}$, the non-abelian tensor square $\mathfrak{g}\star \mathfrak{g}$ has two copies of generators of the form $x *y$,  and exactly these generators are identified in the non-abelian exterior square $\mathfrak{g}\curlywedge \mathfrak{g}$. Thus we need to distinguish two copies of inclusions (identity maps) $i_1, i_2\colon \mathfrak{g}\to \mathfrak{g}$, $i_i=i_2=1_{\mathfrak{g}}$ and take $\mathfrak{g}\curlywedge \mathfrak{g}$ to be the quotient of $\mathfrak{g}\star \mathfrak{g}$ by the relation
$i_1(x)*i_2(y)=i_2(x)*i_1(y)$  for each $x, y\in \mathfrak{g}$.

In the case of $\mathfrak{a}$ and $\mathfrak{b}$ being two ideals of a Leibniz algebra $\mathfrak{g}$ seen as crossed modules, the non-abelian exterior product $\Aa \curlywedge \bb$ is just $\Aa \star \bb$ quotient by the elements of the form $i_1(c) \curlywedge i_2(c') - i_2(c) \curlywedge i_1(c')$, where $c, c' \in \mathfrak{a}\cap \mathfrak{b}$; $i_1\colon \mathfrak{a}\cap \mathfrak{b} \to \mathfrak{a}$ and $i_2\colon \mathfrak{a}\cap \mathfrak{b} \to \mathfrak{b}$
are the natural inclusions.

The proof of the following proposition is immediate.
\begin{proposition}
Let $\mathfrak{a}$ and $\mathfrak{b}$ be two ideals of a Leibniz algebra. There is a homomorphism of Leibniz algebras
\[
\theta_{\mathfrak{a}, \mathfrak{b}} \colon
\mathfrak{a}\curlywedge \mathfrak{b} \to \mathfrak{a}\cap \mathfrak{b}
\]
 defined on generators by
$\theta_{\mathfrak{a}, \mathfrak{b}}(a\curlywedge b)= [a, b]$ and $\theta_{\mathfrak{a}, \mathfrak{b}}(b\curlywedge a)=[b, a]$,
for all $a\in \mathfrak{a}$ and $b\in \mathfrak{b}$. Moreover, $\theta_{\mathfrak{a}, \mathfrak{b}}$ is a crossed module of Leibniz algebras (c.f. \cite[Proposition 4.3]{Gn99}).
\end{proposition}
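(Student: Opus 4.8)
The plan is to realise $\theta_{\mathfrak{a}, \mathfrak{b}}$ as a quotient of the canonical homomorphism $\tau_{\mathfrak{a}}\colon\mathfrak{a}\star\mathfrak{b}\to\mathfrak{a}$. First I would note that, since $\mathfrak{a}$ and $\mathfrak{b}$ are ideals of the ambient algebra $\mathfrak{g}$ and the mutual actions are given by the bracket, one has $\tau_{\mathfrak{a}}(a*b)=a^{b}=[a,b]$ and $\tau_{\mathfrak{a}}(b*a)={}^{b}a=[b,a]$, and both values lie in $\mathfrak{a}\cap\mathfrak{b}$ (they lie in $\mathfrak{a}$ because $a\in\mathfrak{a}$ and $\mathfrak{a}$ is an ideal, and in $\mathfrak{b}$ for the symmetric reason). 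Thus $\tau_{\mathfrak{a}}$ co-restricts to a homomorphism $\tau\colon\mathfrak{a}\star\mathfrak{b}\to\mathfrak{a}\cap\mathfrak{b}$, which coincides with the co-restriction of $\tau_{\mathfrak{b}}$. Evaluating $\tau$ on a generator $i_1(c)*i_2(c')-i_2(c)*i_1(c')$ of $\mathfrak{a}\square\mathfrak{b}$ (with $c,c'\in\mathfrak{a}\cap\mathfrak{b}$) gives $[c,c']-[c,c']=0$, so $\tau$ vanishes on $\mathfrak{a}\square\mathfrak{b}$ and factors through $\mathfrak{a}\curlywedge\mathfrak{b}=(\mathfrak{a}\star\mathfrak{b})/(\mathfrak{a}\square\mathfrak{b})$. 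The resulting map is exactly $\theta_{\mathfrak{a}, \mathfrak{b}}$, which settles the first assertion.

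For the crossed-module statement I would invoke \cite[Proposition 4.3]{Gn99}, by which $\tau_{\mathfrak{a}}\colon\mathfrak{a}\star\mathfrak{b}\to\mathfrak{a}$ is a crossed module for a suitable action of $\mathfrak{a}$ on $\mathfrak{a}\star\mathfrak{b}$. Since the image of $\tau_{\mathfrak{a}}$ sits inside the subalgebra $\mathfrak{a}\cap\mathfrak{b}\leq\mathfrak{a}$, I would restrict this action along $\mathfrak{a}\cap\mathfrak{b}\hookrightarrow\mathfrak{a}$ and co-restrict the codomain: the equivariance identity $\tau({}^{c}w)=[c,\tau(w)]$ and the Peiffer identities ${}^{\tau(w_1)}w_2=[w_1,w_2]=w_1^{\tau(w_2)}$ are then just the ones supplied by \cite{Gn99}, now read inside $\mathfrak{a}\cap\mathfrak{b}$. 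Hence $\tau\colon\mathfrak{a}\star\mathfrak{b}\to\mathfrak{a}\cap\mathfrak{b}$ is a crossed module over $\mathfrak{a}\cap\mathfrak{b}$.

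It remains to push this structure down to $\mathfrak{a}\curlywedge\mathfrak{b}$, and here lies the only delicate point. I would check that the action of $\mathfrak{a}\cap\mathfrak{b}$ preserves the subspace $\mathfrak{a}\square\mathfrak{b}$, so that it descends to an action on the quotient; once this is granted, the crossed-module axioms for $\theta_{\mathfrak{a}, \mathfrak{b}}$ follow at once by projecting those for $\tau$, since $\tau$ kills $\mathfrak{a}\square\mathfrak{b}$ and the bracket passes to the quotient. The subtlety is that, because the Leibniz bracket is not antisymmetric, the action of $d\in\mathfrak{a}\cap\mathfrak{b}$ is not the naive derivation ${}^{d}a\curlywedge b+a\curlywedge{}^{d}b$ but the twisted rule ${}^{d}(a\curlywedge b)=[d,a]\curlywedge b-[d,b]\curlywedge a$ forced by the defining relations of the tensor product (together with its analogue on the generators $b\curlywedge a$). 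Substituting this rule, and using that $[d,c]$ and $[d,c']$ again lie in $\mathfrak{a}\cap\mathfrak{b}$, into a generator of $\mathfrak{a}\square\mathfrak{b}$ exhibits its image once more as a combination of $\square$-generators; this short computation on generators is the heart of the argument and completes the proof.
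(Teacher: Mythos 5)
Your proposal is correct and is essentially the paper's own (unwritten) argument: the paper declares the proof immediate, relying exactly as you do on the canonical map $\tau_{\mathfrak{a}}$ co-restricted to $\mathfrak{a}\cap\mathfrak{b}$, its vanishing on the generators $i_1(c)*i_2(c')-i_2(c)*i_1(c')$ of $\mathfrak{a}\square\mathfrak{b}$, and the citation of \cite[Proposition 4.3]{Gn99} for the crossed-module structure, which then descends to the quotient as you describe. Your twisted action formula ${}^{d}(a\curlywedge b)=[d,a]\curlywedge b-[d,b]\curlywedge a$ is indeed the correct one (a naive derivation-style rule would violate equivariance with the Leibniz identity), and the observation that $[d,c],\,[d,c']$ again lie in $\mathfrak{a}\cap\mathfrak{b}$ is precisely what makes the preservation of $\mathfrak{a}\square\mathfrak{b}$, and hence the descent, go through.
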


\begin{proposition}
 Let $\mathfrak{g}$ be a perfect Leibniz algebra, that is, $\mathfrak{g}=[\mathfrak{g},\mathfrak{g}]$. Then
 the homomorphism $\theta_{\mathfrak{g}, \mathfrak{g}} \colon \mathfrak{g}\curlywedge \mathfrak{g} \twoheadrightarrow \mathfrak{g}$ is the universal central extension of $\mathfrak{g}$.
\end{proposition}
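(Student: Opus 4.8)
The plan is to check directly the three things that make $\theta := \theta_{\Gg,\Gg}$ a universal central extension: that it is a central extension, that $\Gg\curlywedge\Gg$ is perfect, and that every central extension of $\Gg$ receives from it a unique morphism. Surjectivity of $\theta$ is immediate, since its image is spanned by the brackets $[x,y]=\theta(x\curlywedge y)$ and $\Gg=[\Gg,\Gg]$. For the centrality of $\ker\theta$ I would invoke the preceding proposition, by which $\theta$ is a crossed module: its Peiffer identity ${}^{\theta(w_1)}w_2=[w_1,w_2]=w_1^{\theta(w_2)}$, specialised to $w_1=z$ or $w_2=z$ with $z\in\ker\theta$, yields $[z,w]={}^{0}w=0$ and $[w,z]=w^{0}=0$ for all $w$ by bilinearity of the action. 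Thus $0\to\ker\theta\to\Gg\curlywedge\Gg\xrightarrow{\theta}\Gg\to 0$ is a central extension.

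Next I would show $\Gg\curlywedge\Gg$ is perfect. As $\Gg=[\Gg,\Gg]$, every generator is, by bilinearity, a sum of generators of the form $[m,n]*[m',n']$ or $[n,m]*[n',m']$. Reading relations (5a) and (5b) with both actions equal to the bracket gives $[m,n]*[m',n']=[\,m*n,\,m'*n'\,]$ and $[n,m]*[n',m']=[\,n*m,\,n'*m'\,]$, displaying every generator of $\Gg\star\Gg$ as a commutator. Hence $\Gg\star\Gg$ is perfect, and so is its quotient $\Gg\curlywedge\Gg$.

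The heart of the proof is the universal property. Given a central extension $q\colon\mathfrak{e}\to\Gg$ with (central) kernel $A$, and writing $\tilde a$ for any $q$-preimage of $a$, I would define $\psi\colon\Gg\star\Gg\to\mathfrak{e}$ on a generator $a*b$ of either type by $\psi(a*b)=[\tilde a,\tilde b]$. Centrality of $A$ makes this independent of the chosen preimages, and it annihilates $\Gg\square\Gg$ because $i_1(c)*i_2(c')$ and $i_2(c)*i_1(c')$ both map to $[\tilde c,\tilde{c'}]$; so $\psi$ descends to $\psi\colon\Gg\curlywedge\Gg\to\mathfrak{e}$ with $q\psi=\theta$. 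For uniqueness, if $\psi'$ is another lift of $\theta$ then $\psi-\psi'$ is $A$-valued, and evaluating on a commutator $[u,v]$—which is enough since $\Gg\curlywedge\Gg$ is perfect—the central correction terms cancel, giving $\psi([u,v])=\psi'([u,v])$ and hence $\psi=\psi'$.

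The main obstacle is verifying that $\psi$ respects every defining relation (1a)--(5d) together with the exterior relation. The point I would stress is that, under $\psi$, each relation becomes an identity among iterated brackets in $\mathfrak{e}$ that is a consequence solely of bilinearity and the Leibniz identity there; for instance (4a), $m*[m',n]=-m*[n,m']$, translates into $[\tilde m,[\tilde{m'},\tilde n]]=-[\tilde m,[\tilde n,\tilde{m'}]]$, which holds because the Leibniz identity gives $[\tilde m,[\tilde{m'},\tilde n]]+[\tilde m,[\tilde n,\tilde{m'}]]=0$, while relation (5) simultaneously shows $\psi$ is an algebra homomorphism (both sides map to $[[\tilde m,\tilde n],[\tilde{m'},\tilde{n'}]]$). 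A less computational alternative would be to combine the first paragraph with the recognition criterion that a central extension of a perfect Leibniz algebra whose total algebra is perfect with $HL_2=0$ is universal, but I would favour the direct verification, as it rests only on the propositions already available.
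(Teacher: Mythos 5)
Your proof is correct, but it takes a genuinely different route from the paper's. The paper's proof is two lines: since $\mathfrak{g}$ is perfect, every element is a sum of brackets, so bilinearity together with the defining relations (5a)--(5d) identifies the two copies of each generator, i.e.\ $\mathfrak{g}\square\mathfrak{g}=0$ and hence $\mathfrak{g}\star\mathfrak{g}=\mathfrak{g}\curlywedge\mathfrak{g}$; the statement then follows by citing \cite[Theorem 6.5]{Gn99}, which asserts that $\theta_{\mathfrak{g},\mathfrak{g}}\colon\mathfrak{g}\star\mathfrak{g}\to\mathfrak{g}$ is the universal central extension of a perfect Leibniz algebra. You instead reprove that theorem from scratch for the exterior square: centrality of $\Ker\theta_{\mathfrak{g},\mathfrak{g}}$ via the Peiffer identity, perfectness of $\mathfrak{g}\curlywedge\mathfrak{g}$ via (5a)--(5b), construction of the comparison map $\psi(a*b)=[\tilde a,\tilde b]$ into an arbitrary central extension (well defined because the kernel is central, and respecting every defining relation because each becomes an instance of bilinearity or of the Leibniz identity in the target --- your sample verifications, including the derived identity $[x,[y,z]]+[x,[z,y]]=0$ for relation (4a), are exactly the right ones), and uniqueness from perfectness of $\mathfrak{g}\curlywedge\mathfrak{g}$. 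The trade-off: the paper's argument is short and delegates the universal property to the literature, but only after observing that the exterior and tensor squares coincide in the perfect case; yours is self-contained, in effect containing a proof of Gnedbaye's theorem, at the cost of the relation-by-relation check. Note that in your approach the identification $\mathfrak{g}\star\mathfrak{g}=\mathfrak{g}\curlywedge\mathfrak{g}$ never needs to be made explicit --- you only need that $\psi$ kills $\mathfrak{g}\square\mathfrak{g}$, which is immediate since both types of generators map by the same formula --- whereas the paper proves the stronger statement that $\mathfrak{g}\square\mathfrak{g}$ vanishes outright.
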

\begin{proof}The last four identities of the non-abelian tensor product immediately imply that $\mathfrak{g}\star\mathfrak{g}=\mathfrak{g}\curlywedge \mathfrak{g}$. Hence,  by \cite[Theorem 6.5]{Gn99} the homomorphism $\theta_{\mathfrak{g}, \mathfrak{g}} \colon \mathfrak{g}\curlywedge \mathfrak{g} \twoheadrightarrow \mathfrak{g}$ is the universal central extension of the perfect Leibniz algebra $\mathfrak{g}$.
\end{proof}

\subsection{Relationship to the second homology}

Let $\gamma\colon\mathfrak{g}\to \mathfrak{h}$ be a homomorphism of Leibniz algebras, $\mathfrak{a}$ and $\mathfrak{a}'$ (resp. $\mathfrak{b}$ and $\mathfrak{b}'$) be two ideals of $\mathfrak{g}$ (resp. $\mathfrak{h}$) such that $\gamma(\mathfrak{a})\subseteq \mathfrak{b}$ and $\gamma(\mathfrak{a}')\subseteq \mathfrak{b}'$. Since $\gamma(\mathfrak{a}\cap \mathfrak{a}')\subseteq \mathfrak{b}\cap \mathfrak{b}'$, it is easy to see that $\gamma$ induces a homomorphism of Leibniz algebras $\mathfrak{a}\curlywedge \mathfrak{a}'\to \mathfrak{b}\curlywedge \mathfrak{b}'$ in the natural way: $a \curlywedge a'\mapsto \gamma(a)\curlywedge \gamma(a')$ and $a' \curlywedge a\mapsto \gamma(a')\curlywedge \gamma(a)$, for all $a\in \mathfrak{a}$, $a'\in \mathfrak{a}'$.

Now suppose that $0\to \mathfrak{a}\to \mathfrak{g}\to \mathfrak{h} \to 0$ and $0\to \mathfrak{a}'\to \mathfrak{g}'\to \mathfrak{h}' \to 0$
are extensions of Leibniz algebras, where $\mathfrak{a}'$ and $\mathfrak{g}'$ are ideals of $\mathfrak{g}$, while $\mathfrak{h}'$ is an ideal of $\mathfrak{h}$. Then the following naturally induced map $\mathfrak{g}\curlywedge \mathfrak{a}' \times \mathfrak{a}\curlywedge \mathfrak{g}' \to \mathfrak{g}\curlywedge \mathfrak{g}'$
is not in general a homomorphism of Leibniz algebras, but the following sequence
\begin{align}\label{sequence1}
\mathfrak{g}\curlywedge \mathfrak{a}' \times \mathfrak{a}\curlywedge \mathfrak{g}' \to \mathfrak{g}\curlywedge \mathfrak{g}'
\to \mathfrak{h}\curlywedge \mathfrak{h}' \to 0
\end{align}
is exact, in the sense that
$\Img \big(\mathfrak{g}\curlywedge \mathfrak{a}' \times \mathfrak{a}\curlywedge \mathfrak{g}' \to \mathfrak{g}\curlywedge \mathfrak{g}' \big)=
\Ker \big( \mathfrak{g}\curlywedge \mathfrak{g}' \to \mathfrak{h}\curlywedge \mathfrak{h}'\big)$.

\begin{lemma} \label{lemma1} Let $0\to \mathfrak{a}\to \mathfrak{g}\to \mathfrak{h} \to 0$ be an extension of Leibniz algebras.
Then, the following induced sequence of Leibniz algebras $\mathfrak{a}\curlywedge \mathfrak{g}\to \mathfrak{g}\curlywedge \mathfrak{g}\to
\mathfrak{h} \curlywedge \mathfrak{h}\to 0$ is exact.
\end{lemma}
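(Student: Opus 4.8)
The plan is to obtain this lemma as a direct specialisation of the exact sequence \eqref{sequence1}. I would apply that sequence to the case where both extensions are taken to be one and the same extension $0\to \mathfrak{a}\to \mathfrak{g}\to \mathfrak{h}\to 0$, i.e. with $\mathfrak{a}'=\mathfrak{a}$, $\mathfrak{g}'=\mathfrak{g}$ and $\mathfrak{h}'=\mathfrak{h}$. The hypotheses of \eqref{sequence1} are satisfied: $\mathfrak{a}$ is an ideal of $\mathfrak{g}$ (being the kernel of $\mathfrak{g}\to\mathfrak{h}$), $\mathfrak{g}$ is an ideal of itself, and $\mathfrak{h}$ is an ideal of itself. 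This yields the exact sequence
\[
\mathfrak{g}\curlywedge \mathfrak{a}\times \mathfrak{a}\curlywedge \mathfrak{g}\to \mathfrak{g}\curlywedge \mathfrak{g}\to \mathfrak{h}\curlywedge \mathfrak{h}\to 0,
\]
that is, $\Img\big(\mathfrak{g}\curlywedge \mathfrak{a}\times \mathfrak{a}\curlywedge \mathfrak{g}\to \mathfrak{g}\curlywedge \mathfrak{g}\big)=\Ker\big(\mathfrak{g}\curlywedge \mathfrak{g}\to \mathfrak{h}\curlywedge \mathfrak{h}\big)$.

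The next step is to identify the image of the left-hand term with the image of $\mathfrak{a}\curlywedge\mathfrak{g}$ alone. The naturally induced map out of the product has image equal to the sum $\Img(\mathfrak{g}\curlywedge \mathfrak{a}\to \mathfrak{g}\curlywedge \mathfrak{g})+\Img(\mathfrak{a}\curlywedge \mathfrak{g}\to \mathfrak{g}\curlywedge \mathfrak{g})$. The key observation is that these two summands coincide: unwinding the definitions, $\mathfrak{g}\curlywedge \mathfrak{a}$ is generated by the symbols $g\curlywedge a$ and $a\curlywedge g$ (with $g\in\mathfrak{g}$, $a\in\mathfrak{a}$), which map to $g\curlywedge a$ and $a\curlywedge g$ in $\mathfrak{g}\curlywedge \mathfrak{g}$, while $\mathfrak{a}\curlywedge \mathfrak{g}$ is generated by $a\curlywedge g$ and $g\curlywedge a$, which map to the very same elements of $\mathfrak{g}\curlywedge \mathfrak{g}$. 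Hence both maps hit precisely the subspace of $\mathfrak{g}\curlywedge \mathfrak{g}$ spanned by all $a\curlywedge g$ and $g\curlywedge a$ with $a\in\mathfrak{a}$, $g\in\mathfrak{g}$, so their images agree and equal $\Img(\mathfrak{a}\curlywedge \mathfrak{g}\to \mathfrak{g}\curlywedge \mathfrak{g})$.

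Combining the two steps gives $\Img(\mathfrak{a}\curlywedge \mathfrak{g}\to \mathfrak{g}\curlywedge \mathfrak{g})=\Ker(\mathfrak{g}\curlywedge \mathfrak{g}\to \mathfrak{h}\curlywedge \mathfrak{h})$, which is exactly exactness at $\mathfrak{g}\curlywedge \mathfrak{g}$. Surjectivity of $\mathfrak{g}\curlywedge \mathfrak{g}\to \mathfrak{h}\curlywedge \mathfrak{h}$ is already part of \eqref{sequence1}, and is in any case transparent since $\mathfrak{g}\to\mathfrak{h}$ is onto, so every generator $h\curlywedge h'$ of $\mathfrak{h}\curlywedge\mathfrak{h}$ lifts. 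This completes the argument. I expect the only genuinely delicate point to be the image-identification step, where one must verify that swapping the roles of the two factors in $\mathfrak{g}\curlywedge\mathfrak{a}$ versus $\mathfrak{a}\curlywedge\mathfrak{g}$ produces the same subspace of $\mathfrak{g}\curlywedge\mathfrak{g}$; everything else is a formal consequence of the previously established exact sequence.
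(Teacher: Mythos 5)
Your proof is correct and is essentially the paper's own argument: the paper likewise specialises the exact sequence \eqref{sequence1} to the given extension and concludes by observing that the images of the induced homomorphisms $\mathfrak{a}\curlywedge \mathfrak{g}\to \mathfrak{g}\curlywedge \mathfrak{g}$ and $\mathfrak{g}\curlywedge \mathfrak{a}\to \mathfrak{g}\curlywedge \mathfrak{g}$ coincide. Your expanded justification of that image-identification step (both maps send their generators to the same elements $a\curlywedge g$ and $g\curlywedge a$ of $\mathfrak{g}\curlywedge \mathfrak{g}$) is precisely the detail the paper leaves implicit.
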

\begin{proof} Since the images of the induced homomorphisms $\mathfrak{a}\curlywedge \mathfrak{g}\to \mathfrak{g}\curlywedge \mathfrak{g}$ and $\mathfrak{g}\curlywedge \mathfrak{a}\to \mathfrak{g}\curlywedge \mathfrak{g}$ are the same, the statement follows immediately from the exactness of (\ref{sequence1}).
\end{proof}

\begin{proposition}\label{proposition1} Let $\mathfrak{f}$ be a free Leibniz algebra over a set $X$.
Then $\theta_{\mathfrak{f}, \mathfrak{f}}$ is injective.
\end{proposition}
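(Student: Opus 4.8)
The plan is to prove injectivity by a direct combinatorial argument, exploiting the explicit monomial basis of a free Leibniz algebra. In particular I will not invoke any identification of $\Ker\theta_{\mathfrak{f},\mathfrak{f}}$ with a homology group, so the argument remains self-contained and non-circular. Throughout, recall that $\theta_{\mathfrak{f},\mathfrak{f}}\colon \mathfrak{f}\curlywedge\mathfrak{f}\to\mathfrak{f}$ is already known to be surjective onto $[\mathfrak{f},\mathfrak{f}]$, so the content is the vanishing of its kernel.

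First I would fix the standard homogeneous basis of $\mathfrak{f}=\mathfrak{F}(X)$. Letting $V$ be the vector space with basis $X$, the free Leibniz algebra is graded, $\mathfrak{f}=\bigoplus_{n\ge 1}\mathfrak{f}_n$ with $\mathfrak{f}_n\cong V^{\otimes n}$, and the left-normed monomials $[x_{i_1},\dots,x_{i_n}]:=[\cdots[[x_{i_1},x_{i_2}],x_{i_3}]\cdots,x_{i_n}]$ (with $x_{i_j}\in X$) form a basis: the Leibniz identity rewrites every bracketing into left-normed form, and a dimension count shows there are no further relations. Consequently $[\mathfrak{f},\mathfrak{f}]=\bigoplus_{n\ge 2}\mathfrak{f}_n$ has as basis the set $B$ of left-normed monomials of length $\ge 2$, and each such monomial is \emph{uniquely} of the form $[w,x]$ with $x\in X$ and $w$ a left-normed monomial of length $\ge 1$ (read off, resp. drop, the last letter). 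Since $\theta_{\mathfrak{f},\mathfrak{f}}(w\curlywedge x)=[w,x]$, the assignment $(w,x)\mapsto w\curlywedge x$ produces a set $S\seq\mathfrak{f}\curlywedge\mathfrak{f}$ that $\theta_{\mathfrak{f},\mathfrak{f}}$ maps \emph{bijectively} onto the basis $B$.

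The heart of the argument is to show that $S$ \emph{spans} $\mathfrak{f}\curlywedge\mathfrak{f}$. For this I would prove, by strong induction on the length of the right-hand factor, that every generator $u\curlywedge v$ (with $u,v\in\mathfrak{f}$) is a linear combination of elements $w\curlywedge x$ with $x\in X$. By bilinearity it suffices to treat basis monomials $u,v$. The base case $v\in X$ is immediate; for the inductive step write $v=[v',x]$ with $x\in X$ and $v'$ of strictly smaller length, and apply the defining relation (3a) of the tensor product — which survives in the quotient $\mathfrak{f}\curlywedge\mathfrak{f}=\mathfrak{f}\star\mathfrak{f}/(\mathfrak{f}\,\square\,\mathfrak{f})$ — in the form
\[
u\curlywedge[v',x]=[u,v']\curlywedge x-[u,x]\curlywedge v'.
\]
The first term already has a generator on the right, while the second has a strictly shorter right factor, so the induction closes; expanding the left factors $[u,v']$ and $[u,x]$ in the monomial basis then rewrites $u\curlywedge v$ as a combination of elements of $S$.

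Finally I would conclude formally: given $\xi\in\Ker\theta_{\mathfrak{f},\mathfrak{f}}$, write $\xi=\sum_s c_s\,s$ over $S$ (possible by the spanning step); applying $\theta_{\mathfrak{f},\mathfrak{f}}$ yields $\sum_s c_s\,\theta_{\mathfrak{f},\mathfrak{f}}(s)=0$, which is a linear relation among the distinct basis elements of $B$, forcing all $c_s=0$ and hence $\xi=0$. Thus $\theta_{\mathfrak{f},\mathfrak{f}}$ is injective (indeed an isomorphism onto $[\mathfrak{f},\mathfrak{f}]$). I expect the main obstacle to be the spanning step: one must ensure that the reductions via (3a) terminate and never reintroduce longer right factors, which is precisely what the induction on the length of the right argument, together with the uniqueness of the left-normed form, guarantees. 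A secondary point to check carefully is that (3a) is the relation governing the mutual bracket actions of $\mathfrak{f}$ on itself and that it is genuinely available in $\mathfrak{f}\curlywedge\mathfrak{f}$.
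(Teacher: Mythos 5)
Your proof is correct, but it reaches the conclusion by a genuinely different route than the paper. The paper never uses the left-normed basis of $\mathfrak{f}$: working directly with the presentation $\mathfrak{f}=\mathbb{K}(M_X)/\mathcal{N}(S_X)$ of Subsection \ref{free}, it writes $[\mathfrak{f},\mathfrak{f}]=\mathbb{K}[M_X,M_X]/\mathcal{N}(S_X)$ and uses the unique factorization $m=[x,y]$ of elements of the free \emph{magma} to define a linear map $\tau\colon\mathbb{K}[M_X,M_X]\to\mathfrak{f}\curlywedge\mathfrak{f}$, $[x,y]\mapsto x\curlywedge y$; the substance of the proof is then checking that $\tau$ kills the relator subspace $\mathcal{N}(S_X)$ (a computation with the same relations you invoke), after which the induced map $\tau^*$ is visibly a two-sided inverse of $\theta_{\mathfrak{f},\mathfrak{f}}$. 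You go in the opposite direction: you take as input the Loday--Pirashvili normal form (left-normed monomials form a basis, i.e.\ $\mathfrak{F}(X)\cong\bigoplus_{n\ge 1}(\mathbb{K}X)^{\otimes n}$), rewrite every generator $u\curlywedge v$ into the span of $S=\{w\curlywedge x \mid w \text{ a monomial},\ x\in X\}$ by induction on the length of the right factor via relation (3a), and conclude because $\theta_{\mathfrak{f},\mathfrak{f}}$ carries $S$ bijectively onto a linearly independent subset of $[\mathfrak{f},\mathfrak{f}]$. The two arguments thus share their essential ingredients---a unique-factorization normal form for bracket monomials plus relation (3a) in the exterior square---but deploy them dually: the paper verifies well-definedness of a backwards (section) map, while you verify spanning of a forwards rewriting; both inductions are sound, and your linear-algebra conclusion is valid. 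The trade-off is this: the paper's proof is self-contained, since unique factorization in a free magma is immediate from its construction, whereas your basis theorem is a genuine external input---and your one-line justification of it (a ``dimension count'') is not a proof, since rewriting only gives spanning, while linear independence requires constructing the Leibniz structure on the reduced tensor module and invoking the universal property; you should cite \cite{LoPi} for this. In compensation, your argument yields a bit more than the paper's: it exhibits $S$ as an explicit basis of $\mathfrak{f}\curlywedge\mathfrak{f}$, identifying it with $\bigoplus_{n\ge 2}(\mathbb{K}X)^{\otimes n}$, and you were right to avoid any appeal to $HL_2$, since Theorem \ref{theorem1} depends on Proposition \ref{proposition1} and such an argument would be circular.
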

\begin{proof} We will consider $\theta_{\mathfrak{f}, \mathfrak{f}}$ as an epimorphism $\mathfrak{f}\curlywedge \mathfrak{f} \twoheadrightarrow [\mathfrak{f}, \mathfrak{f}]$ and show that it is an isomorphism.
Using the same notations as in Subsection \ref{free}, suppose $[M_X, M_X]$ denotes the subset $\{[x, y] \mid x, y\in M_X\}$ of $M_X$ and
$\mathbb{K}[M_X, M_X]$ denotes the free vector space over the set $[M_X, M_X]$. Then,
\[
[\mathfrak{f}, \mathfrak{f}]=\frac{\mathbb{K}[M_X, M_X]}{\mathcal{N}(S_X)}.
\]
Note that for each element $m\in [M_X, M_X]$ there are unique $x$ and $y$ in $M_X$ such that $m=[x, y]$.
Therefore, the following map $\tau \colon \mathbb{K}[
M_X, M_X] \to \mathfrak{f}\curlywedge \mathfrak{f}$, given by
$[x, y]\mapsto x \curlywedge y$ for each $x, y\in M(X)$, is well-defined. We have
$$
[x, [y, z]]-[[x,y], z]+[[x,z],y]\overset{\tau}{\mapsto} x\curlywedge [y, z]-[x,y]\curlywedge z+[x,z]\curlywedge y= 0,
$$
for each $x, y, z\in M_X$. Moreover, if $m=\underset{i=1}{\overset{n}{\sum}} k_i x_i \in S_X$ with $k_1, \ldots, k_n\in \mathbb{K}$
and $x_1, \ldots, x_n\in M_X$, then we have:
\[
[x, m]\overset{\tau}{\mapsto} \overset{n}{\underset{i=1}{\sum}}k_i (x\curlywedge x_i)=x\curlywedge \Big(\overset{n}{\underset{i=1}{\sum}}k_i x_i\Big)=0,
\]
\[
[m, x]\overset{\tau}{\mapsto} \overset{n}{\underset{i=1}{\sum}}k_i (x_i\curlywedge x)=\Big(\overset{n}{\underset{i=1}{\sum}}k_i x_i\Big)\curlywedge x =0,
\]
for each $x\in M_X$. As a result we have that $\tau (\mathcal{N}(S_X))=0$. Thus, $\tau$ induces a well-defined
linear map $\tau^*\colon [\mathfrak{f}, \mathfrak{f}]\to \mathfrak{f}\curlywedge \mathfrak{f}$. Furthermore,
$\tau^* \circ {\theta_{\mathfrak{f}, \mathfrak{f}}}=1_{\mathfrak{f}\curlywedge \mathfrak{f}}$ and
${\theta_{\mathfrak{f}, \mathfrak{f}}}\circ\tau^* =1_{[\mathfrak{f}, \mathfrak{f}]}$. This completes the proof.
\end{proof}

\begin{corollary} \label{corollary1} Let $0\to \mathfrak{r}\to \mathfrak{f}\to \mathfrak{g} \to 0$ be a free presentation of a Leibniz algebra
$\mathfrak{g}$. Then there is an isomorphism
\[
\mathfrak{g}\curlywedge \mathfrak{g} \cong [\mathfrak{f}, \mathfrak{f}]/[\mathfrak{r}, \mathfrak{f}].
\]
\end{corollary}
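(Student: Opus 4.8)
The plan is to combine the two preceding results---Lemma~\ref{lemma1} and Proposition~\ref{proposition1}---using the map $\theta_{\mathfrak{f},\mathfrak{f}}$ as a bridge. First I would apply Lemma~\ref{lemma1} to the given free presentation $0\to\mathfrak{r}\to\mathfrak{f}\to\mathfrak{g}\to 0$, which yields the exact sequence
\[
\mathfrak{r}\curlywedge\mathfrak{f}\xrightarrow{\;\iota\;}\mathfrak{f}\curlywedge\mathfrak{f}\to\mathfrak{g}\curlywedge\mathfrak{g}\to 0 .
\]
This identifies $\mathfrak{g}\curlywedge\mathfrak{g}$ with the cokernel of $\iota$, that is, with $(\mathfrak{f}\curlywedge\mathfrak{f})/\Img\iota$, so the whole task reduces to computing $\Img\iota$ inside $\mathfrak{f}\curlywedge\mathfrak{f}$.

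Next I would invoke Proposition~\ref{proposition1}: since $\mathfrak{f}$ is free, $\theta_{\mathfrak{f},\mathfrak{f}}\colon\mathfrak{f}\curlywedge\mathfrak{f}\to[\mathfrak{f},\mathfrak{f}]$ is an isomorphism. Transporting the cokernel computation through this isomorphism reduces the corollary to the single identification $\theta_{\mathfrak{f},\mathfrak{f}}(\Img\iota)=[\mathfrak{r},\mathfrak{f}]$. Here I would argue that $\Img\iota$ is spanned by the elements $r\curlywedge x$ and $x\curlywedge r$ with $r\in\mathfrak{r}$ and $x\in\mathfrak{f}$, because the induced map $\mathfrak{r}\curlywedge\mathfrak{f}\to\mathfrak{f}\curlywedge\mathfrak{f}$ sends generators to generators; applying $\theta_{\mathfrak{f},\mathfrak{f}}$ then sends these to $[r,x]$ and $[x,r]$, whose span is exactly $[\mathfrak{r},\mathfrak{f}]$. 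As $\theta_{\mathfrak{f},\mathfrak{f}}$ is an isomorphism, it carries the subspace $\Img\iota$ bijectively onto $[\mathfrak{r},\mathfrak{f}]$ and hence induces
\[
\mathfrak{g}\curlywedge\mathfrak{g}\;\cong\;(\mathfrak{f}\curlywedge\mathfrak{f})/\Img\iota\;\cong\;[\mathfrak{f},\mathfrak{f}]/[\mathfrak{r},\mathfrak{f}] .
\]

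The main obstacle, such as it is, is bookkeeping rather than structure: one must describe the generating set of $\Img\iota$ correctly and, crucially, make sure the meaning of $[\mathfrak{r},\mathfrak{f}]$ used here matches the one in the Hopf formula of Theorem~\ref{theorem_Hopf2}, namely the span of all $[r,x]$ \emph{and} $[x,r]$. Because the Leibniz bracket is not symmetric, both orders genuinely contribute on each side, and the point is simply that the two generators $r\curlywedge x$, $x\curlywedge r$ of $\Img\iota$ account for precisely these two families. Once this is checked the argument is entirely formal, requiring no structural input beyond Lemma~\ref{lemma1} and Proposition~\ref{proposition1}.
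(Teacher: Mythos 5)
Your proof is correct and takes exactly the paper's approach: the paper's entire proof of this corollary is the remark that it follows from Lemma \ref{lemma1} and Proposition \ref{proposition1}, and your write-up just supplies the details — applying Lemma \ref{lemma1} to the presentation, transporting the cokernel of $\mathfrak{r}\curlywedge\mathfrak{f}\to\mathfrak{f}\curlywedge\mathfrak{f}$ through the isomorphism $\theta_{\mathfrak{f},\mathfrak{f}}$, and identifying its image with the (two-sided) commutator $[\mathfrak{r},\mathfrak{f}]$. Your bookkeeping point about $[\mathfrak{r},\mathfrak{f}]$ containing both $[r,x]$ and $[x,r]$ is indeed the right convention, matching the Hopf formula of Theorem \ref{theorem_Hopf2}.
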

\begin{proof} This follows from Lemma \ref{lemma1} and Proposition \ref{proposition1}.
\end{proof}

\begin{theorem} \label{theorem1} Let $\mathfrak{g}$ be a Leibniz algebra. Then there is an isomorphism
\[
HL_2(\mathfrak{g}) \cong \Ker \Big(\theta_{\mathfrak{g}, \mathfrak{g}}\colon \mathfrak{g}\curlywedge \mathfrak{g}\to \mathfrak{g}\Big).
\]
\end{theorem}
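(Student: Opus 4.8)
The plan is to reduce the statement to a free presentation and then combine the Hopf formula of Theorem \ref{theorem_Hopf2} with the identification of the exterior square from Corollary \ref{corollary1}. So I would fix a free presentation $0\to\mathfrak{r}\to\mathfrak{f}\to\mathfrak{g}\to 0$ with projection $p\colon\mathfrak{f}\to\mathfrak{g}$ and $\mathfrak{f}$ free; such a presentation exists for every Leibniz algebra, and both $HL_2(\mathfrak{g})$ and $\Ker\theta_{\mathfrak{g},\mathfrak{g}}$ are defined intrinsically, so the choice will not matter.

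The first step is to record the naturality square relating the two $\theta$-maps along $p$. The homomorphism $p$ induces $p_\ast\colon\mathfrak{f}\curlywedge\mathfrak{f}\to\mathfrak{g}\curlywedge\mathfrak{g}$ and the restriction $p|\colon[\mathfrak{f},\mathfrak{f}]\to[\mathfrak{g},\mathfrak{g}]$, and on a generator $f\curlywedge f'$ both composites $\theta_{\mathfrak{g},\mathfrak{g}}\circ p_\ast$ and $p|\circ\theta_{\mathfrak{f},\mathfrak{f}}$ send $f\curlywedge f'$ to $[p(f),p(f')]=p([f,f'])$; hence the square commutes. This is a routine check on generators, but it is the pivotal point of the argument.

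Next I would invoke Corollary \ref{corollary1} to identify $\mathfrak{g}\curlywedge\mathfrak{g}\cong[\mathfrak{f},\mathfrak{f}]/[\mathfrak{r},\mathfrak{f}]$, recalling that this isomorphism arises precisely because $\theta_{\mathfrak{f},\mathfrak{f}}$ is an isomorphism onto $[\mathfrak{f},\mathfrak{f}]$ by Proposition \ref{proposition1} while $\mathfrak{g}\curlywedge\mathfrak{g}$ is the cokernel of $\mathfrak{r}\curlywedge\mathfrak{f}\to\mathfrak{f}\curlywedge\mathfrak{f}$ by Lemma \ref{lemma1}. Under this identification the commuting square of the previous paragraph shows that $\theta_{\mathfrak{g},\mathfrak{g}}$ becomes the map $q\colon[\mathfrak{f},\mathfrak{f}]/[\mathfrak{r},\mathfrak{f}]\to[\mathfrak{g},\mathfrak{g}]\hookrightarrow\mathfrak{g}$ induced by $p$. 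Its kernel I would then compute directly: $q$ is well defined since $p([\mathfrak{r},\mathfrak{f}])=[0,\mathfrak{g}]=0$, and an element $w\in[\mathfrak{f},\mathfrak{f}]$ maps to $0$ in $[\mathfrak{g},\mathfrak{g}]$ exactly when $w\in\Ker p\cap[\mathfrak{f},\mathfrak{f}]=\mathfrak{r}\cap[\mathfrak{f},\mathfrak{f}]$. Since $[\mathfrak{r},\mathfrak{f}]\subseteq\mathfrak{r}\cap[\mathfrak{f},\mathfrak{f}]$, this yields $\Ker\theta_{\mathfrak{g},\mathfrak{g}}\cong\Ker q=\frac{\mathfrak{r}\cap[\mathfrak{f},\mathfrak{f}]}{[\mathfrak{r},\mathfrak{f}]}$.

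Finally, Theorem \ref{theorem_Hopf2} identifies this last subquotient with $HL_2(\mathfrak{g})$, completing the proof. The part demanding the most care — the ``hard part'' in an otherwise formal argument — is the compatibility of the isomorphism of Corollary \ref{corollary1} with the two $\theta$-maps, i.e.\ the commuting square above: one must be sure that the kernel of $\theta_{\mathfrak{g},\mathfrak{g}}$ corresponds to the Hopf-formula subquotient \emph{on the nose}, and not merely to an abstractly isomorphic space, since only then does the chain of identifications transport $\Ker\theta_{\mathfrak{g},\mathfrak{g}}$ onto $HL_2(\mathfrak{g})$.
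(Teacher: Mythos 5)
Your proof is correct and is essentially the paper's own argument: the paper likewise takes a free presentation, invokes the Hopf formula of Theorem \ref{theorem_Hopf2} to write $HL_2(\mathfrak{g})$ as $\Ker\big([\mathfrak{f},\mathfrak{f}]/[\mathfrak{r},\mathfrak{f}]\to\mathfrak{g}\big)$, and then applies Corollary \ref{corollary1}. The only difference is that you make explicit the commuting square showing that the identification $\mathfrak{g}\curlywedge\mathfrak{g}\cong[\mathfrak{f},\mathfrak{f}]/[\mathfrak{r},\mathfrak{f}]$ intertwines $\theta_{\mathfrak{g},\mathfrak{g}}$ with the map induced by the projection, a verification the paper leaves implicit.
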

\begin{proof} Let $0\to \mathfrak{r}\to \mathfrak{f}\to \mathfrak{g} \to 0$ be a free presentation of $\mathfrak{g}$. By the Hopf
formula we have
\[
HL_2(\mathfrak{g}) \cong \Ker \Big([\mathfrak{f}, \mathfrak{f}]/[\mathfrak{r}, \mathfrak{f}]\to \mathfrak{g}\Big).
\]
Thus, Corollary \ref{corollary1} completes the proof.
\end{proof}

\begin{proposition} \label{proposition2} Let $0\to \mathfrak{a}\to \mathfrak{g}\to \mathfrak{h} \to 0$ be an extension of Leibniz
algebras. Then we have the following exact sequence
\begin{align*}
&\Ker \big( \theta_\mathfrak{a, g} \colon \mathfrak{a}\curlywedge \mathfrak{g} \to \mathfrak{a}\big) \to HL_2(\mathfrak{g})\to HL_2(\mathfrak{h})\to
\mathfrak{a}/[\mathfrak{a}, \mathfrak{g}] \to HL_1(\mathfrak{g})
\to HL_1(\mathfrak{h})\to 0 .
\end{align*}
\end{proposition}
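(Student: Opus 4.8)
The plan is to exhibit the asserted sequence as the snake-lemma sequence of a commutative ladder whose vertical arrows are the commutator maps $\theta$, whose top row is supplied by Lemma \ref{lemma1}, and whose bottom row is the given extension. Concretely, I would assemble the commutative diagram
\[
\CD
\mathfrak{a}\curlywedge \mathfrak{g} @>>> \mathfrak{g}\curlywedge \mathfrak{g} @>>> \mathfrak{h}\curlywedge \mathfrak{h} @>>> 0\\
@V\theta_{\mathfrak{a},\mathfrak{g}}VV @V\theta_{\mathfrak{g},\mathfrak{g}}VV @V\theta_{\mathfrak{h},\mathfrak{h}}VV @.\\
\mathfrak{a} @>>> \mathfrak{g} @>>> \mathfrak{h} @>>> 0
\endCD
\]
in which the top horizontal maps are those of Lemma \ref{lemma1} and the bottom ones are the inclusion $\mathfrak{a}\hookrightarrow\mathfrak{g}$ and the surjection $\mathfrak{g}\to\mathfrak{h}$. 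Commutativity of both squares is immediate from the rule $\theta(x\curlywedge y)=[x,y]$: on the left it records that $\theta_{\mathfrak{a},\mathfrak{g}}$ takes values in $\mathfrak{a}$ and is compatible with the bracket, and on the right it records that $\mathfrak{g}\to\mathfrak{h}$ is a homomorphism.

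Next I would identify the kernels and cokernels of the three vertical maps. Theorem \ref{theorem1} gives $\Ker\theta_{\mathfrak{g},\mathfrak{g}}\cong HL_2(\mathfrak{g})$ and $\Ker\theta_{\mathfrak{h},\mathfrak{h}}\cong HL_2(\mathfrak{h})$, while $\Ker\theta_{\mathfrak{a},\mathfrak{g}}$ stays as it is. Since $\theta_{\mathfrak{g},\mathfrak{g}}$ sends $g\curlywedge g'$ to $[g,g']$, its image is $[\mathfrak{g},\mathfrak{g}]$, so $\Coker\theta_{\mathfrak{g},\mathfrak{g}}=\mathfrak{g}/[\mathfrak{g},\mathfrak{g}]=HL_1(\mathfrak{g})$, and likewise $\Coker\theta_{\mathfrak{h},\mathfrak{h}}=HL_1(\mathfrak{h})$. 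Finally $\Img\theta_{\mathfrak{a},\mathfrak{g}}$ is the two-sided bracket $[\mathfrak{a},\mathfrak{g}]$, whence $\Coker\theta_{\mathfrak{a},\mathfrak{g}}=\mathfrak{a}/[\mathfrak{a},\mathfrak{g}]$.

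Then I would invoke the snake lemma. Its hypotheses hold exactly: by Lemma \ref{lemma1} the top row is exact at $\mathfrak{g}\curlywedge\mathfrak{g}$ and surjects onto $\mathfrak{h}\curlywedge\mathfrak{h}$, while the bottom row is exact at $\mathfrak{a}$ (the inclusion is injective) and at $\mathfrak{g}$ (exactness of the extension). The resulting six-term exact sequence
\[
\Ker\theta_{\mathfrak{a},\mathfrak{g}}\to\Ker\theta_{\mathfrak{g},\mathfrak{g}}\to\Ker\theta_{\mathfrak{h},\mathfrak{h}}\to\Coker\theta_{\mathfrak{a},\mathfrak{g}}\to\Coker\theta_{\mathfrak{g},\mathfrak{g}}\to\Coker\theta_{\mathfrak{h},\mathfrak{h}}
\]
becomes, after the identifications above, precisely the claimed sequence. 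The terminal arrow to $0$ is obtained because $\mathfrak{g}\to\mathfrak{h}$ is onto, so the induced map $HL_1(\mathfrak{g})\to HL_1(\mathfrak{h})$ of abelianizations is onto as well.

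I do not anticipate a serious obstacle: the argument is essentially bookkeeping that reduces the claim to the snake lemma, and its only genuine inputs are Lemma \ref{lemma1}, which furnishes exactly the right-exactness of the top row, and Theorem \ref{theorem1}, which reinterprets the two leftmost kernels as second homologies. The one point deserving care is checking that the snake-lemma hypotheses are met verbatim and that the connecting and induced arrows are indeed the natural maps; but since the statement only asserts the existence of an exact sequence with these terms, the canonical maps produced by the snake lemma suffice.
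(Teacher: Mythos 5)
Your proof is correct and follows exactly the paper's argument: the same commutative ladder with top row from Lemma \ref{lemma1} and vertical commutator maps $\theta$, the Snake Lemma, and Theorem \ref{theorem1} to identify the kernels, with the cokernels recognized as $\mathfrak{a}/[\mathfrak{a},\mathfrak{g}]$, $HL_1(\mathfrak{g})$ and $HL_1(\mathfrak{h})$. The paper merely states this in one line, while you spell out the identifications and the final surjectivity, which is fine.
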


\begin{proof} By Lemma \ref{lemma1} we have the following commutative diagram with exact rows
\begin{equation*}
\xymatrix@+20pt{
&\mathfrak{a}\curlywedge \mathfrak{g}\ \ar@{->}[r]
\ar@{->}[d]_{\theta_\mathfrak{a, g}}
 &\mathfrak{g}\curlywedge \mathfrak{g}\ar@{->}[r]
\ar@{->}[d]_{\theta_\mathfrak{g, g}}
&\mathfrak{h}\curlywedge \mathfrak{h}\ar@{->}[r]
\ar@{->}[d]_{\theta_\mathfrak{h, h}}
&0 \\
0\ \ar@{->}[r]
&\mathfrak{a}\ \ar@{->}[r]
 &\mathfrak{g}\ar@{->}[r]
&\mathfrak{h}\ar@{->}[r]
&0 .
}\end{equation*}
Now the Snake Lemma and Theorem \ref{theorem1} yield the exact sequence.
\end{proof}

Let $\mathfrak{g} \bullet \mathfrak{g}$ denote the vector space
$\Coker (\mathfrak{g\otimes }\mathfrak{g}\otimes \mathfrak{g} \overset{d}{\to } \mathfrak{g}\otimes \mathfrak{g})$, where
$d$ is the boundary map in $CL_*(\mathfrak{g})$. Let $\de \colon \mathfrak{g} \bullet \mathfrak{g} \to \mathfrak{g} \curlywedge \mathfrak{g}$
be a linear map given by $x \bullet y \mapsto x \curlywedge y$, where $x\bullet y$ denotes the coset of $x\otimes y \in
\mathfrak{g}\otimes \mathfrak{g}$ into $\mathfrak{g}\bullet \mathfrak{g}$. It is easy to check that $\de$ is well-defined.

\begin{proposition}\label{} The linear map $\de \colon \mathfrak{g} \bullet \mathfrak{g} \to \mathfrak{g} \curlywedge \mathfrak{g}$ is
an isomorphism of vector spaces.
\end{proposition}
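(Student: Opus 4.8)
The plan is to prove that $\delta$ is an isomorphism by reducing to the case of a free Leibniz algebra, where both sides are identified with the commutator subalgebra, and then transporting the result along a free presentation. Throughout, write $\bar d\colon \mathfrak{g}\bullet\mathfrak{g}\to\mathfrak{g}$ for the linear map induced by the boundary $d\colon\mathfrak{g}^{\otimes 2}\to\mathfrak{g}$, so that $\bar d(x\bullet y)=[x,y]$; since $\Img(d\colon\mathfrak{g}^{\otimes 3}\to\mathfrak{g}^{\otimes 2})\subseteq\Ker(d\colon\mathfrak{g}^{\otimes 2}\to\mathfrak{g})$, the map $\bar d$ is well-defined, $\Ker\bar d=HL_2(\mathfrak{g})$, and $\Img\bar d=[\mathfrak{g},\mathfrak{g}]$. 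By construction $\theta_{\mathfrak{g},\mathfrak{g}}\circ\delta=\bar d$, and $\delta$ is visibly natural in $\mathfrak{g}$. First I note that $\delta$ is surjective: the relations $(5\textrm{a})$–$(5\textrm{d})$ show that every bracket of generators of $\mathfrak{g}\star\mathfrak{g}$ is again a generator, so $\mathfrak{g}\curlywedge\mathfrak{g}$ is spanned as a vector space by the elements $x\curlywedge y$, each of which lies in $\Img\delta$.

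Next I would treat a free Leibniz algebra $\mathfrak{f}$. Here $HL_2(\mathfrak{f})=0$, which is immediate from the Hopf formula of Theorem \ref{theorem_Hopf2} applied to the presentation $0\to 0\to\mathfrak{f}\to\mathfrak{f}\to 0$. Hence $\bar d\colon\mathfrak{f}\bullet\mathfrak{f}\to\mathfrak{f}$ is injective and restricts to an isomorphism $\mathfrak{f}\bullet\mathfrak{f}\to[\mathfrak{f},\mathfrak{f}]$. On the other hand, Proposition \ref{proposition1} gives that $\theta_{\mathfrak{f},\mathfrak{f}}\colon\mathfrak{f}\curlywedge\mathfrak{f}\to[\mathfrak{f},\mathfrak{f}]$ is also an isomorphism. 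Since $\theta_{\mathfrak{f},\mathfrak{f}}\circ\delta_{\mathfrak{f}}=\bar d$ with both outer maps invertible, $\delta_{\mathfrak{f}}=\theta_{\mathfrak{f},\mathfrak{f}}^{-1}\circ\bar d$ is an isomorphism. This settles the free case cheaply, without touching the defining relations of $\star$.

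Finally, for arbitrary $\mathfrak{g}$ I would choose a free presentation $0\to\mathfrak{r}\to\mathfrak{f}\xrightarrow{\pi}\mathfrak{g}\to 0$ and compare kernels through the naturality square relating $\delta_{\mathfrak{f}}$ and $\delta_{\mathfrak{g}}$. On the exterior side, Lemma \ref{lemma1} shows that the kernel $K_{\curlywedge}$ of $\mathfrak{f}\curlywedge\mathfrak{f}\twoheadrightarrow\mathfrak{g}\curlywedge\mathfrak{g}$ is the image of $\mathfrak{r}\curlywedge\mathfrak{f}$, hence is spanned by the images of $r\curlywedge x$ and $x\curlywedge r$ with $r\in\mathfrak{r}$, $x\in\mathfrak{f}$. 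On the $\bullet$-side, since $\mathbb{K}$ is a field the functor $-\otimes-$ is exact, so $\Ker(\pi\otimes\pi)=\mathfrak{r}\otimes\mathfrak{f}+\mathfrak{f}\otimes\mathfrak{r}$, and using the naturality identity $(\pi\otimes\pi)\circ d^{\mathfrak{f}}=d^{\mathfrak{g}}\circ\pi^{\otimes 3}$ together with surjectivity of $\pi^{\otimes 3}$, one gets $\Img(d^{\mathfrak{g}})=(\pi\otimes\pi)\big(\Img(d^{\mathfrak{f}})\big)$. It follows that the kernel $K_{\bullet}$ of $\mathfrak{f}\bullet\mathfrak{f}\twoheadrightarrow\mathfrak{g}\bullet\mathfrak{g}$ is exactly the image of $\mathfrak{r}\otimes\mathfrak{f}+\mathfrak{f}\otimes\mathfrak{r}$ in $\mathfrak{f}\bullet\mathfrak{f}$, i.e. is spanned by the $r\bullet x$ and $x\bullet r$. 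Since $\delta_{\mathfrak{f}}(r\bullet x)=r\curlywedge x$ and $\delta_{\mathfrak{f}}(x\bullet r)=x\curlywedge r$, the isomorphism $\delta_{\mathfrak{f}}$ carries $K_{\bullet}$ onto $K_{\curlywedge}$; passing to the quotients $\mathfrak{g}\bullet\mathfrak{g}=(\mathfrak{f}\bullet\mathfrak{f})/K_{\bullet}$ and $\mathfrak{g}\curlywedge\mathfrak{g}=(\mathfrak{f}\curlywedge\mathfrak{f})/K_{\curlywedge}$ shows that the induced map $\delta_{\mathfrak{g}}=\delta$ is an isomorphism.

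I expect the main obstacle to be the identification of $K_{\bullet}$: unlike the exterior product, the functor $-\bullet-$ comes with no analogue of Lemma \ref{lemma1} supplied in advance, so the right-exact sequence $\mathfrak{r}\otimes\mathfrak{f}+\mathfrak{f}\otimes\mathfrak{r}\to\mathfrak{f}\bullet\mathfrak{f}\to\mathfrak{g}\bullet\mathfrak{g}\to 0$ must be extracted directly from the cokernel definition. Verifying that $\Ker(\pi\otimes\pi)$ together with $\Img(d^{\mathfrak{f}})$ accounts for everything that dies in $\mathfrak{g}\bullet\mathfrak{g}$ rests on the exactness of $\otimes$ over the field $\mathbb{K}$ and on the naturality square above; this flatness input is precisely where the hypothesis that $\mathbb{K}$ is a field is genuinely used.
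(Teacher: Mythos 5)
Your proof is correct, but it takes a genuinely different route from the paper's. The paper stays at the level of $\mathfrak{g}$ itself: it forms the commutative diagram whose rows are $0\to\Ker d'\to\mathfrak{g}\bullet\mathfrak{g}\xrightarrow{d'}\mathfrak{g}\to\mathfrak{g}/[\mathfrak{g},\mathfrak{g}]$ and $0\to\Ker\theta_{\mathfrak{g},\mathfrak{g}}\to\mathfrak{g}\curlywedge\mathfrak{g}\xrightarrow{\theta_{\mathfrak{g},\mathfrak{g}}}\mathfrak{g}\to\mathfrak{g}/[\mathfrak{g},\mathfrak{g}]$, observes that $\Ker d'=HL_2(\mathfrak{g})$, invokes Theorem \ref{theorem1} to get an isomorphism $\Ker d'\cong\Ker\theta_{\mathfrak{g},\mathfrak{g}}$, checks that this isomorphism is induced by $\delta$, and concludes by a five-lemma style chase. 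You instead prove the free case directly (your $\bar d$ and $\theta_{\mathfrak{f},\mathfrak{f}}$ are both isomorphisms onto $[\mathfrak{f},\mathfrak{f}]$, whence $\delta_{\mathfrak{f}}=\theta_{\mathfrak{f},\mathfrak{f}}^{-1}\circ\bar d$ is an isomorphism) and then transport the result along a free presentation, matching the kernel $K_{\bullet}$, computed via flatness of $\otimes$ over the field, against the kernel $K_{\curlywedge}$, computed via Lemma \ref{lemma1}. The underlying ingredients largely coincide --- Proposition \ref{proposition1}, Lemma \ref{lemma1}, and the Hopf formula --- since Theorem \ref{theorem1} is itself deduced from exactly these; but the architectures differ. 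The paper's argument is shorter because Theorem \ref{theorem1} is already in place, though it buries a nontrivial compatibility check in ``it is easy to verify that this isomorphism is induced by $\delta$'': unwinding that claim means tracing $\delta$ through the Hopf-formula identification, which is essentially the computation you carry out explicitly. Your version makes that verification transparent (naturality of $\delta$ on generators is immediate, and $\delta_{\mathfrak{f}}$ visibly maps $K_{\bullet}$ onto $K_{\curlywedge}$) and isolates precisely where the hypothesis that $\mathbb{K}$ is a field enters, namely in $\Ker(\pi\otimes\pi)=\mathfrak{r}\otimes\mathfrak{f}+\mathfrak{f}\otimes\mathfrak{r}$; the cost is that you redo, by hand, the presentation-transport step that the paper has already packaged into Theorem \ref{theorem1}.
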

\begin{proof} We have the following commutative diagram with exact rows:

\begin{equation*}
\xymatrix@+20pt{
0\ \ar@{->}[r] &\Ker d'\ \ar@{->}[r]
\ar@{->}[d]_{}
 &\mathfrak{g}\bullet \mathfrak{g}\ar@{->}[r]_{d'}
\ar@{->}[d]_{\de}
& \mathfrak{g}\ar@{->}[r]
\ar@{=}[d]
&\mathfrak{g}/[\mathfrak{g}, \mathfrak{g}] \ar@{=}[d] \\
0\ \ar@{->}[r]
&\Ker \theta_{\mathfrak{g}, \mathfrak{g}}\ \ar@{->}[r]
 &\mathfrak{g}\curlywedge \mathfrak{g}\ar@{->}[r]_{\theta_\mathfrak{g, g}}
&\mathfrak{g}\ar@{->}[r]
&\mathfrak{g}/[\mathfrak{g}, \mathfrak{g}] ,
}\end{equation*}

where $d'$ is given by $x\bullet y \mapsto [x, y]$ for each $x, y \in \mathfrak{g}$. Since $HL_2(\mathfrak{g})=\Ker d'$,
by Theorem \ref{theorem1} we have an isomorphism $\Ker d' \cong \Ker \theta_{\mathfrak{g, g}}$. It is easy to verify that this isomorphism
is induced by $\de$. Hence, the above diagram proves the proposition.
\end{proof}

\begin{remark}  It is shown in  \cite{KuPi} that the vector space $\mathfrak{g}\bullet \mathfrak{g}$ has the Leibniz algebra structure
given by
$$
[x\bullet y , x' \bullet y'] = [x, y]\bullet [x', y'],
$$
for each $x, y, x', y' \in \mathfrak{g}$. This fact results from the previous proposition, because in $\mathfrak{g}\curlywedge \mathfrak{g}$ we have that $[x\curlywedge y , x' \curlywedge y'] = [x, y]\curlywedge [x', y']$.
\end{remark}

\

\section{Third homology and the eight term exact sequence}\label{S:H3}

In this section we will use the method developed in \cite{DIL16} to prove the eight term exact sequence.

\begin{lemma} \label{lemma2} Let $0\to \mathfrak{a}\to \mathfrak{g}\overset{\tau}{\to} \mathfrak{h} \to 0$ be a split extension of Leibniz
algebras, i. e. there is a homomorphism of Leibniz algebras $\sigma \colon \mathfrak{h}\to \mathfrak{g}$ such that $\tau\circ \sigma =1_\mathfrak{h}$.
Then the induced homomorphism of Leibniz algebras $\mathfrak{a}\curlywedge \mathfrak{g}\to \mathfrak{g}\curlywedge \mathfrak{g}$ is injective.
\end{lemma}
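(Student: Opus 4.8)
The plan is to use the splitting to construct an explicit linear left inverse of the induced map $\iota\colon\mathfrak{a}\curlywedge\mathfrak{g}\to\mathfrak{g}\curlywedge\mathfrak{g}$; since all objects are $\mathbb{K}$–vector spaces, producing a linear $\Phi$ with $\Phi\circ\iota=\id$ forces $\iota$ to be injective. The section $\sigma$ furnishes the idempotent Leibniz endomorphism $s=\sigma\tau\colon\mathfrak{g}\to\mathfrak{g}$ with image the subalgebra $\sigma(\mathfrak{h})$, together with the complementary linear projection $\rho=1_{\mathfrak{g}}-\sigma\tau$. From $\tau\sigma=1_{\mathfrak{h}}$ one gets $\tau\rho=0$, hence $\Img\rho\subseteq\Ker\tau=\mathfrak{a}$, so I regard $\rho$ as a linear map $\rho\colon\mathfrak{g}\to\mathfrak{a}$; note that $\rho$ is the identity on $\mathfrak{a}$ and $s$ vanishes on $\mathfrak{a}$. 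The crucial asymmetry that the computation must absorb is that $s$ is a homomorphism while $\rho$ is merely linear.

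First I would define $\Phi\colon\mathfrak{g}\curlywedge\mathfrak{g}\to\mathfrak{a}\curlywedge\mathfrak{g}$ on generators by
\[
\Phi(x\curlywedge y)=\rho(x)\curlywedge y+s(x)\curlywedge\rho(y)=\rho(x)\curlywedge\rho(y)+\rho(x)\curlywedge s(y)+s(x)\curlywedge\rho(y),
\]
where $\rho(x)\curlywedge y$ is read in $\mathfrak{a}\curlywedge\mathfrak{g}$ and $s(x)\curlywedge\rho(y)$ in $\mathfrak{g}\curlywedge\mathfrak{a}$, the same exterior product. Morally, $\Phi$ records $x\curlywedge y$ with its $\sigma(\mathfrak{h})\curlywedge\sigma(\mathfrak{h})$–component deleted. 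Granting well-definedness, the retraction identity is immediate: for $a\in\mathfrak{a}$ one has $\rho(a)=a$ and $s(a)=0$, so $\Phi(a\curlywedge y)=a\curlywedge y$, while $\Phi(y\curlywedge a)=\rho(y)\curlywedge a+s(y)\curlywedge a=(\rho(y)+s(y))\curlywedge a=y\curlywedge a$ by bilinearity. Hence $\Phi\circ\iota=\id$ and $\iota$ is injective.

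The main obstacle is showing that $\Phi$ respects every defining relation of $\mathfrak{g}\curlywedge\mathfrak{g}$, that is, relations (1a)--(5d) together with the exterior identification. The scalar and additivity relations (1),(2) hold automatically since $\rho,s$ are linear and $\curlywedge$ is bilinear. The substance lies in the action and bracket relations (3)--(5), where $\rho$ is not multiplicative. The uniform mechanism is to write $\rho=1_{\mathfrak{g}}-s$ and use that $s$ \emph{is} a homomorphism, so that $\rho[x,y]=[x,y]-[s(x),s(y)]$ and the purely $\sigma(\mathfrak{h})$–valued brackets cancel; the ideal property $[\mathfrak{a},\mathfrak{g}],[\mathfrak{g},\mathfrak{a}]\subseteq\mathfrak{a}$ then keeps the surviving brackets inside $\mathfrak{a}$, where the corresponding relation of $\mathfrak{a}\curlywedge\mathfrak{g}$ applies. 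For instance, verifying $x\curlywedge[x',y]=-x\curlywedge[y,x']$ splits, under $\Phi$, into a $\rho(x)\curlywedge(-)$ part that cancels by the consequence $m\curlywedge[n,n']=-m\curlywedge[n',n]$ of (3a), and, after deleting the $s$–brackets, an $s(x)\curlywedge(-)$ part which breaks into summands $s(x)\curlywedge\big({}^{u}v+v^{u}\big)$ with $v\in\mathfrak{a}$, each of which is zero by (4b). The remaining relations, and the compatibility with the exterior identification, are dispatched by identical manipulations, after which $\Phi$ is a well-defined linear retraction and the lemma follows.
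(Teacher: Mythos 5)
Your splitting map is exactly the one the paper uses (your $\Phi(x\curlywedge y)=\rho(x)\curlywedge\rho(y)+\rho(x)\curlywedge s(y)+s(x)\curlywedge\rho(y)$ is the paper's $\beta$ written in terms of $\rho=1_{\mathfrak{g}}-\sigma\tau$ and $s=\sigma\tau$), and the retraction identity $\Phi\circ\iota=\id$ is fine once $\Phi$ exists. The genuine gap is in your well-definedness argument. The exterior square $\mathfrak{g}\curlywedge\mathfrak{g}$ is not presented \emph{as a vector space} by the symbols $x\curlywedge y$ and the relations (1a)--(5d): it is presented \emph{as a Leibniz algebra}, i.e., it is the quotient of the free Leibniz algebra on those symbols by the two-sided \emph{ideal} generated by the relations and the exterior identification. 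A map that is merely $\mathbb{K}$-linear factors through such a quotient only if it kills every element of that ideal which is a linear combination of generators, and this subspace is a priori larger than the span of the listed relations: the ideal also contains all brackets of relations with arbitrary elements, which via (5a)--(5d) reduce to further linear combinations of generators, and one would have to prove that all such reductions already lie in the span of what you check --- a closure property your proposal never addresses. Checking the listed relations suffices only for a \emph{homomorphism} of Leibniz algebras (kernels of homomorphisms are ideals), and $\Phi$ is not one; indeed your plan to ``check (5a)--(5d)'' is not even meaningful for a map that is only linear, since $\Phi$ assigns no independent value to $[x\curlywedge y,\,x'\curlywedge y']$. So the proof has a hole exactly at its load-bearing step.

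This difficulty is precisely what the paper's proof is engineered to avoid. It defines the same formula as a bilinear map $\beta\colon\mathfrak{g}\otimes\mathfrak{g}\to\mathfrak{a}\curlywedge\mathfrak{g}$ (on the tensor product of vector spaces there is nothing to verify beyond bilinearity), checks the single condition $\beta(\Img d)=0$ for the Leibniz boundary $d\colon\mathfrak{g}^{\otimes 3}\to\mathfrak{g}^{\otimes 2}$, so that $\beta$ factors through $\mathfrak{g}\bullet\mathfrak{g}=\Coker d$, and then composes the induced map $\bar{\beta}$ with $\delta^{-1}$, where $\delta\colon\mathfrak{g}\bullet\mathfrak{g}\to\mathfrak{g}\curlywedge\mathfrak{g}$, $x\bullet y\mapsto x\curlywedge y$, is the vector-space isomorphism established at the end of Section \ref{S:Ten} (itself a consequence of Theorem \ref{theorem1}, hence ultimately of the Hopf formula). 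To repair your argument you must either take this route, or first prove the nontrivial lemma that the underlying vector space of $\mathfrak{g}\star\mathfrak{g}$ is the free vector space on the symbols modulo the span of (1a)--(4b) and the $\square$-elements alone; neither your proposal nor the paper establishes that lemma.
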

\begin{proof}  Denote the induced homomorphism of Leibniz algebras {$\mathfrak{a}\curlywedge \mathfrak{g}\to \mathfrak{g}\curlywedge \mathfrak{g}$} by $\al$.
We shall show that there exists a linear map $\mathfrak{g}\curlywedge \mathfrak{g} \to \mathfrak{a}\curlywedge \mathfrak{g}$
which is a $\mathbb{K}$-linear splitting for $\alpha$.
For each element $x\in \mathfrak{g}$ there are unique $a\in \mathfrak{a}$ and $y\in \mathfrak{h}$ such that $x=a+\tau (h)$.
Let $\be \colon \mathfrak{g}\otimes \mathfrak{g} \to \mathfrak{a}\curlywedge \mathfrak{g}$ be a linear map given by
$(a+\tau (h)) \otimes (a'+\tau (h')) \overset{\be}{\mapsto} a\curlywedge \tau (h') + a \curlywedge a' +
\tau (h) \curlywedge a'$ for each $a, a'\in \mathfrak{a}$ and $y, y' \in \mathfrak{g}$.
It is easy to check that $\be$ is well-defined $\mathbb{K}$-linear map and that $\be (\Img d) = 0$, where
$d \colon \mathfrak{g\otimes }\mathfrak{g}\otimes \mathfrak{g} \to  \mathfrak{g}\otimes \mathfrak{g}$ is the boundary map
in $CL_*(\mathfrak{g})$. Thus, $\be$ induces the linear map $\bar{\be} \colon \mathfrak{g}\bullet \mathfrak{g}\to
\mathfrak{a}\curlywedge \mathfrak{g}$. Now, let $\de \colon \mathfrak{g} \bullet \mathfrak{g} \to \mathfrak{g} \curlywedge \mathfrak{g}$
be the linear map defined as in Section \ref{S:Ten}. Then,
the linear map $\bar{\be}\de^{-1} \colon \mathfrak{g} \curlywedge \mathfrak{g} \to \mathfrak{a} \curlywedge \mathfrak{g}$
is such that $\bar{\be}\de^{-1}\alpha =1_{\mathfrak{a}\curlywedge \mathfrak{g}}$. Thus, $\al$ is injective.
\end{proof}

\begin{theorem} \label{theorem2} Let $0\to\mathfrak{r}\to \mathfrak{f}\to \mathfrak{g}\to 0$ be a free presentation
of a Leibniz algebra $\mathfrak{g}$. Then there is an isomorphism
$$
HL_3(\mathfrak{g}) \cong \Ker \Big(\theta_{\mathfrak{r}, \mathfrak{f}}\colon \mathfrak{r}\curlywedge \mathfrak{f}\to \mathfrak{r}\Big).
$$
\end{theorem}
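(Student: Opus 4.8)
The plan is to reduce the statement to a kernel inside the exterior square of $\mathfrak{f}$ and then to compute that kernel via the Hopf formula of Theorem \ref{theorem_Hopf3}. First I would exploit Proposition \ref{proposition1}. Let $\iota\colon\mathfrak{r}\hookrightarrow\mathfrak{f}$ be the inclusion and let $j\colon\mathfrak{r}\curlywedge\mathfrak{f}\to\mathfrak{f}\curlywedge\mathfrak{f}$ be the induced homomorphism. The square with vertical maps $\theta_{\mathfrak{r},\mathfrak{f}}$ and $\theta_{\mathfrak{f},\mathfrak{f}}$ and horizontal maps $j$ and $\iota$ commutes, since both composites send $r\curlywedge f$ to $[r,f]$. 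Because $\theta_{\mathfrak{f},\mathfrak{f}}$ is injective by Proposition \ref{proposition1} and $\iota$ is injective, a short diagram chase gives $\Ker\theta_{\mathfrak{r},\mathfrak{f}}=\Ker j$. Thus it suffices to prove $HL_3(\mathfrak{g})\cong\Ker\big(j\colon\mathfrak{r}\curlywedge\mathfrak{f}\to\mathfrak{f}\curlywedge\mathfrak{f}\big)$, that is, to extend the exact sequence of Lemma \ref{lemma1} one step to the left.

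Next I would manufacture a presentation to which Theorem \ref{theorem_Hopf3} applies, since a single free presentation supplies only one relation ideal. Write $\mathfrak{f}$ as free on a set $X$, choose a set $Y$ with a map onto a generating set of the ideal $\mathfrak{r}$, and let $\mathfrak{q}$ be the free Leibniz algebra on $Y$ equipped with the homomorphism $\psi\colon\mathfrak{q}\to\mathfrak{f}$ whose image generates $\mathfrak{r}$. Set $\mathfrak{e}=\mathfrak{f}\amalg\mathfrak{q}$, the coproduct, which is again free. Let $\mathfrak{s}=\Ker(\mathfrak{e}\to\mathfrak{f})$ be the ideal generated by $\mathfrak{q}$, so that $0\to\mathfrak{s}\to\mathfrak{e}\to\mathfrak{f}\to 0$ is a split extension, and let $\mathfrak{r}'$ be the ideal generated by the graph elements $y-\psi(y)$. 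Then $\mathfrak{e}/\mathfrak{s}\cong\mathfrak{f}$ and $\mathfrak{e}/\mathfrak{r}'\cong\mathfrak{f}$ are free, while $\mathfrak{e}/(\mathfrak{r}'+\mathfrak{s})\cong\mathfrak{f}/\mathfrak{r}\cong\mathfrak{g}$. Theorem \ref{theorem_Hopf3} therefore yields
\[
HL_3(\mathfrak{g})\cong\frac{\mathfrak{r}'\cap\mathfrak{s}\cap[\mathfrak{e},\mathfrak{e}]}{[\mathfrak{r}',\mathfrak{s}]+[\mathfrak{r}'\cap\mathfrak{s},\mathfrak{e}]}.
\]

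It then remains to identify this quotient with $\Ker j$. Here I would use the projection $\rho\colon\mathfrak{e}\to\mathfrak{f}$, which satisfies $\rho(\mathfrak{r}')=\mathfrak{r}$, to induce a surjection $\mathfrak{r}'\curlywedge\mathfrak{e}\twoheadrightarrow\mathfrak{r}\curlywedge\mathfrak{f}$; by the excision-type exactness of sequence (\ref{sequence1}) its kernel is generated by the terms involving $\mathfrak{s}$, and the splitting of $\rho$ together with Lemma \ref{lemma2} controls exactly these terms. Since $\mathfrak{e}$ is free, Proposition \ref{proposition1} identifies $\mathfrak{e}\curlywedge\mathfrak{e}$ with $[\mathfrak{e},\mathfrak{e}]$, so the numerator $\mathfrak{r}'\cap\mathfrak{s}\cap[\mathfrak{e},\mathfrak{e}]$ is carried into $\mathfrak{r}'\curlywedge\mathfrak{e}$, and I would trace how the denominator $[\mathfrak{r}',\mathfrak{s}]+[\mathfrak{r}'\cap\mathfrak{s},\mathfrak{e}]$ matches precisely the relations cutting $\mathfrak{r}'\curlywedge\mathfrak{e}$ down to $\mathfrak{r}\curlywedge\mathfrak{f}$ and the image $\Img j$. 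Assembling these identifications produces an isomorphism between the Hopf quotient and $\Ker j=\Ker\theta_{\mathfrak{r},\mathfrak{f}}$.

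The main obstacle is precisely this last matching step: keeping track of the correspondence of generators through the isomorphism $\mathfrak{e}\curlywedge\mathfrak{e}\cong[\mathfrak{e},\mathfrak{e}]$, and verifying that the two commutator subspaces in the denominator of the Hopf formula translate, without loss, into the defining relations of the exterior product and into $\Img j$. The splitting of $0\to\mathfrak{s}\to\mathfrak{e}\to\mathfrak{f}\to 0$ is what renders the comparison maps injective via Lemma \ref{lemma2}, so the careful bookkeeping of these subspaces, rather than any conceptual difficulty, is where the real work lies.
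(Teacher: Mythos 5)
Your proposal is correct, but it reaches the theorem by a genuinely different route than the paper. The paper argues simplicially: it takes a free simplicial resolution $\mathfrak{f}_*$ of $\mathfrak{g}$ (Remark \ref{remark1}), identifies $HL_3(\mathfrak{g})$ with the first homotopy group of $[\mathfrak{f}_*,\mathfrak{f}_*]$ via the long exact homotopy sequence of $0\to[\mathfrak{f}_*,\mathfrak{f}_*]\to\mathfrak{f}_*\to\mathfrak{Ab}(\mathfrak{f}_*)\to 0$, and invokes the Hopf formulas only as \emph{vanishing} statements for the free algebras in the resolution, arriving at the intermediate formula \eqref{E:HS3}. The configuration appearing there, $(\mathfrak{f}_1;\Ker d^1_0,\Ker d^1_1;d^1_1)$, is exactly what your coproduct construction $(\mathfrak{e};\mathfrak{r}',\mathfrak{s};\rho)$ builds by hand: both quotients are the same free algebra and $\rho(\mathfrak{r}')=\mathfrak{r}$ matches $d^1_1(\Ker d^1_0)=\mathfrak{r}$. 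So your direct application of Theorem \ref{theorem_Hopf3} to $(\mathfrak{e};\mathfrak{r}',\mathfrak{s})$ replaces the paper's entire homotopy-theoretic derivation, and from \eqref{E:HS3} onward the two arguments coincide (Proposition \ref{proposition1}, Lemma \ref{lemma2}, sequence \eqref{sequence1}). What you gain is elementarity: no simplicial machinery, only the routine verifications $\mathfrak{e}/\mathfrak{s}\cong\mathfrak{e}/\mathfrak{r}'\cong\mathfrak{f}$ and $\mathfrak{e}/(\mathfrak{r}'+\mathfrak{s})\cong\mathfrak{g}$. What the paper gains is self-containment within its derived-functor framework: it uses Theorem \ref{theorem_Hopf3} only for free algebras, where it reduces to a vanishing statement, rather than as the computational engine applied to $\mathfrak{g}$ itself.

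Two points, both available in the paper's toolkit, are needed to finish your ``bookkeeping'' step and are worth naming. First, to carry the Hopf numerator into $\mathfrak{r}'\curlywedge\mathfrak{e}$ you need $\mathfrak{r}'\cap[\mathfrak{e},\mathfrak{e}]=[\mathfrak{r}',\mathfrak{e}]$, i.e.\ Theorem \ref{theorem_Hopf2} applied to $0\to\mathfrak{r}'\to\mathfrak{e}\to\mathfrak{f}\to 0$ together with $HL_2(\mathfrak{f})=0$; this rewrites the numerator as $\mathfrak{s}\cap[\mathfrak{r}',\mathfrak{e}]$, and you never cite Theorem \ref{theorem_Hopf2}. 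Second, the isomorphism $\mathfrak{r}'\curlywedge\mathfrak{e}\cong[\mathfrak{r}',\mathfrak{e}]$ requires Lemma \ref{lemma2} for the split extension with kernel $\mathfrak{r}'$ (split by the retraction $(\id,\psi)\colon\mathfrak{e}\to\mathfrak{f}$), not the one with kernel $\mathfrak{s}$ as you state; since the single inclusion $\mathfrak{f}\hookrightarrow\mathfrak{e}$ splits both, this is a misstatement rather than an obstruction. With these in place, the images of $(\mathfrak{r}'\cap\mathfrak{s})\curlywedge\mathfrak{e}$ and $\mathfrak{r}'\curlywedge\mathfrak{s}$ in $\mathfrak{r}'\curlywedge\mathfrak{e}$ correspond exactly to the Hopf denominator, sequence \eqref{sequence1} identifies $[\mathfrak{r}',\mathfrak{e}]$ modulo that denominator with $\mathfrak{r}\curlywedge\mathfrak{f}$ compatibly with the maps to $[\mathfrak{f},\mathfrak{f}]$, and the Hopf quotient becomes $\Ker\theta_{\mathfrak{r},\mathfrak{f}}$ directly; your preliminary reduction to $\Ker j$, while correct, is then not even needed.
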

\begin{proof} According to Remark \ref{remark1}, for computing $HL_*(\mathfrak{g})$ we can use an exact augmented simplicial Leibniz algebra
\[
 \cdots\; \mathfrak{f}_2\;
\athree{d^2_0}{d^2_2} \; \mathfrak{f}_1\atwo{d^1_0}{d^1_1} \; \mathfrak{f}_0 \overset{d^0_0}{\to} \mathfrak{g}
\]
such that $\mathfrak{f}_i$ is a free Leibniz algebra over a set, for each $i\geq 0$, $\mathfrak{f}_0=\mathfrak{f}$ and $\Ker d^0_0 = \mathfrak{r}$. Then, the long exact homotopy sequence derived from the following short exact sequence of simplicial Leibniz algebras
\[
0\to [\mathfrak{f}_*,\mathfrak{f}_*]\to \mathfrak{f}_*\to \mathfrak{Ab}(\mathfrak{f}_*)\to 0,
\]
implies that $HL_3(\mathfrak{g})$ is isomorphic to the first homotopy group of the following simplicial Leibniz algebra
\[
\cdots\; [\mathfrak{f}_2, \mathfrak{f}_2]\;
\athree{d^2_0}{d^2_2} \; [\mathfrak{f}_1, \mathfrak{f}_1]\atwo{d^1_0}{d^1_1} \; [\mathfrak{f}_0, \mathfrak{f}_0].
\]
Hence,
\[
HL_3(\mathfrak{g})\cong \Ker d^1_0 \cap \Ker d^1_1 \cap [\mathfrak{f}_1, \mathfrak{f}_1] /
d^2_2\big ( \Ker d^2_0 \cap \Ker d^2_1 \cap [\mathfrak{f}_2, \mathfrak{f}_2] \big).
\]
Since $HL_2(\mathfrak{f}_0)=0$ and $HL_3(\mathfrak{f}_1)=0$, using Hopf formulas we have
\begin{align*}
&\Ker d^1_0 \cap [\mathfrak{f}_1, \mathfrak{f}_1]= [\Ker d^1_0, \mathfrak{f}_1],\\
&\Ker d^2_0 \cap \Ker d^2_1 \cap [\mathfrak{f}_2, \mathfrak{f}_2]=[\Ker d^2_0 \cap \Ker d^2_1, \mathfrak{f}_2]+
[\Ker d^2_0, \Ker d^2_1].
\end{align*}
Therefore,
\begin{align*}
HL_3(\mathfrak{g})&\cong \Ker d^1_1 \cap [\Ker d^1_0, \mathfrak{f}_1] /
d^2_2\big( [\Ker d^2_0 \cap \Ker d^2_1, \mathfrak{f}_2]+[\Ker d^2_0, \Ker d^2_1] \big) \\
&=
\Ker d^1_1 \cap [\Ker d^1_0, \mathfrak{f}_1] /
\big( [d^2_2(\Ker d^2_0 \cap \Ker d^2_1), d^2_2(\mathfrak{f}_2)]+[d^2_2(\Ker d^2_0), d^2_2(\Ker d^2_1)] \big)\\
&=
\Ker d^1_1 \cap [\Ker d^1_0, \mathfrak{f}_1] /
\big( [\Ker d^1_0 \cap \Ker d^1_1, \mathfrak{f}_1]+[\Ker d^1_0, \Ker d^1_1] \big).
\end{align*}
Since $d_1^1 \big ( [\Ker d^1_0 \cap \Ker d^1_1, \mathfrak{f}_1]+[\Ker d^1_0, \Ker d^1_1] \big)=0$, we get
\begin{equation}\label{E:HS3}
HL_3(\mathfrak{g})\cong \Ker \Bigg( \frac{[\Ker d^1_0, \mathfrak{f}_1] }{
[\Ker d^1_0 \cap \Ker d^1_1, \mathfrak{f}_1]+[\Ker d^1_0, \Ker d^1_1] } \overset{d_1^1}{\longrightarrow}[\mathfrak{f}_0, \mathfrak{f}_0]\Bigg).
\end{equation}
Furthermore, since $0\to \Ker d^1_1 \to \mathfrak{f}_1 \overset{d^1_1}{\to} \mathfrak{f}_0\to 0$ is a free presentation of $\mathfrak{f}_0$ which splits,
 by Proposition \ref{proposition1} and Lemma \ref{lemma2} the following map $\Ker d^1_1 \curlywedge \mathfrak{f}_1 \to [\Ker d^1_1, \mathfrak{f}_1]$,
  defined by $x\curlywedge y \mapsto [x,y]$, $y\curlywedge x \mapsto [y,x]$,
for all $x\in \Ker d_1^1$ and $y\in \mathfrak{f}_1$, is an isomorphism. Therefore,
\begin{equation*}
\frac{[\Ker d^1_0, \mathfrak{f}_1] }{[\Ker d^1_0 \cap \Ker d^1_1, \mathfrak{f}_1]+[\Ker d^1_0, \Ker d^1_1] }\cong
\frac{\Ker d^1_0\curlywedge \mathfrak{f}_1 }{\big((\Ker d^1_0 \cap \Ker d^1_1) \curlywedge \mathfrak{f}_1\big)+\big(\Ker d^1_0\curlywedge \Ker d^1_1\big)}\,.
\end{equation*}
Hence, the  exact sequences $0\to \Ker d^1_0 \cap \Ker d^1_1 \to \Ker d^1_0 \to \Ker d^0_0 \to 0$ and
$0\to \Ker d^1_1 \to \mathfrak{f}_1 \to \mathfrak{f}_0 \to 0$, and (\ref{sequence1}) imply that
\begin{equation}\label{E:rf}
\frac{[\Ker d^1_0, \mathfrak{f}_1] }{[\Ker d^1_0 \cap \Ker d^1_1, \mathfrak{f}_1]+[\Ker d^1_0, \Ker d^1_1] }\cong
\Ker d_0^0 \curlywedge \mathfrak{f}_0 = \mathfrak{r}\curlywedge \mathfrak{f}.
\end{equation}
Now \eqref{E:HS3} and \eqref{E:rf} complete the proof.
\end{proof}

\begin{proposition} \label{proposition3} Let $0\to \mathfrak{a}\to \mathfrak{g}\overset{\tau}{\to} \mathfrak{h} \to 0$ be an extension of Leibniz
algebras. Then we have the following exact sequence
\begin{align*}
HL_3(\mathfrak{g})&\to HL_3(\mathfrak{h})\to \Ker \big( \theta_\mathfrak{a, g} \colon \mathfrak{a}\curlywedge \mathfrak{g} \to \mathfrak{a}\big) \to HL_2(\mathfrak{g})\to HL_2(\mathfrak{h})\\
&\to
\mathfrak{a}/[\mathfrak{a}, \mathfrak{g}] \to HL_1(\mathfrak{g})\to HL_1(\mathfrak{h})\to 0 .
\end{align*}
\end{proposition}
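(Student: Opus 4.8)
The plan is to realise all three homologies through short complexes built from a single free presentation and then to compare them by a long exact sequence. First I would fix a free presentation $0\to\mathfrak{r}\to\mathfrak{f}\to\mathfrak{g}\to0$ and let $\mathfrak{s}$ be the preimage of $\mathfrak{a}$ under $\mathfrak{f}\to\mathfrak{g}$, so that $0\to\mathfrak{s}\to\mathfrak{f}\to\mathfrak{h}\to0$ is again a free presentation and $\mathfrak{a}\cong\mathfrak{s}/\mathfrak{r}$, with $\mathfrak{r}\subseteq\mathfrak{s}$ ideals of $\mathfrak{f}$. To this data I attach the two three-term complexes of vector spaces (in degrees $2,1,0$)
\[
\mathfrak{r}\curlywedge\mathfrak{f}\xrightarrow{\theta_{\mathfrak{r},\mathfrak{f}}}\mathfrak{r}\longrightarrow\mathfrak{f}^{\ab},\qquad
\mathfrak{s}\curlywedge\mathfrak{f}\xrightarrow{\theta_{\mathfrak{s},\mathfrak{f}}}\mathfrak{s}\longrightarrow\mathfrak{f}^{\ab},
\]
where the degree-$1$ maps are the composites $\mathfrak{r}\hookrightarrow\mathfrak{f}\to\mathfrak{f}^{\ab}$ and $\mathfrak{s}\hookrightarrow\mathfrak{f}\to\mathfrak{f}^{\ab}$. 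Using Theorem \ref{theorem2} for $H_2$, the Hopf formula (Theorem \ref{theorem_Hopf2}) for $H_1$, and a direct computation for $H_0$, the homology of the first complex is $HL_3(\mathfrak{g}),HL_2(\mathfrak{g}),HL_1(\mathfrak{g})$ and of the second is $HL_3(\mathfrak{h}),HL_2(\mathfrak{h}),HL_1(\mathfrak{h})$; here one uses that $\Img\theta_{\mathfrak{r},\mathfrak{f}}=[\mathfrak{r},\mathfrak{f}]$ and that the kernel of $\mathfrak{r}\to\mathfrak{f}^{\ab}$ is $\mathfrak{r}\cap[\mathfrak{f},\mathfrak{f}]$.

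The inclusion $\mathfrak{r}\hookrightarrow\mathfrak{s}$ together with $\id_{\mathfrak{f}^{\ab}}$ defines a chain map $\varphi$ from the first complex to the second, inducing on homology the natural maps $HL_n(\mathfrak{g})\to HL_n(\mathfrak{h})$. I would then pass to the mapping cone $C$ of $\varphi$ and write down its long homology exact sequence,
\[
HL_3(\mathfrak{g})\to HL_3(\mathfrak{h})\to H_2(C)\to HL_2(\mathfrak{g})\to HL_2(\mathfrak{h})\to H_1(C)\to HL_1(\mathfrak{g})\to HL_1(\mathfrak{h})\to H_0(C)\to0.
\]
It then remains to identify $H_0(C)=0$, $H_1(C)\cong\mathfrak{a}/[\mathfrak{a},\mathfrak{g}]$ and $H_2(C)\cong\Ker\theta_{\mathfrak{a},\mathfrak{g}}$, which turns the cone sequence into the asserted one.

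The crucial input is an identification of $\mathfrak{a}\curlywedge\mathfrak{g}$. The epimorphism $q\colon\mathfrak{s}\curlywedge\mathfrak{f}\to\mathfrak{a}\curlywedge\mathfrak{g}$ induced by $\mathfrak{f}\to\mathfrak{g}$ fits, by the exactness of (\ref{sequence1}) applied to $0\to\mathfrak{r}\to\mathfrak{f}\to\mathfrak{g}\to0$ and $0\to\mathfrak{r}\to\mathfrak{s}\to\mathfrak{a}\to0$, into $\mathfrak{r}\curlywedge\mathfrak{f}\ \times\ \mathfrak{s}\curlywedge\mathfrak{r}\to\mathfrak{s}\curlywedge\mathfrak{f}\xrightarrow{q}\mathfrak{a}\curlywedge\mathfrak{g}\to0$. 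I claim $\Ker q=\Img(\phi\colon\mathfrak{r}\curlywedge\mathfrak{f}\to\mathfrak{s}\curlywedge\mathfrak{f})$, i.e.\ the extra generators coming from $\mathfrak{s}\curlywedge\mathfrak{r}$ already lie in $\Img\phi$. This is exactly where the exterior rather than the tensor product is essential: for $a\in\mathfrak{s}$ and $r\in\mathfrak{r}\subseteq\mathfrak{s}$ both entries of $a*r$ lie in $\mathfrak{s}$, so the defining relations of $\mathfrak{s}\square\mathfrak{f}$ identify the two copies of $a*r$, moving the $\mathfrak{r}$-entry into the slot seen by $\phi$. Granting this, $\Ker q=\Img\phi$; and since $\theta_{\mathfrak{a},\mathfrak{g}}\,q=\bar{\theta}\,\theta_{\mathfrak{s},\mathfrak{f}}$ with $\bar{\theta}\colon\mathfrak{s}\to\mathfrak{s}/\mathfrak{r}=\mathfrak{a}$, the map $q$ carries $\theta_{\mathfrak{s},\mathfrak{f}}^{-1}(\mathfrak{r})$ onto $\Ker\theta_{\mathfrak{a},\mathfrak{g}}$ with kernel $\theta_{\mathfrak{s},\mathfrak{f}}^{-1}(\mathfrak{r})\cap\Img\phi=\Img\phi$. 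A direct computation of the cone differentials then gives $H_2(C)\cong\theta_{\mathfrak{s},\mathfrak{f}}^{-1}(\mathfrak{r})/\Img\phi\cong\Ker\theta_{\mathfrak{a},\mathfrak{g}}$ and $H_1(C)\cong\mathfrak{s}/([\mathfrak{s},\mathfrak{f}]+\mathfrak{r})\cong\mathfrak{a}/[\mathfrak{a},\mathfrak{g}]$, while $H_0(C)=0$ because $\varphi$ is an isomorphism in degree $0$.

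I expect the main obstacle to be precisely the claim $\Ker q=\Img\phi$: the naive right-exact sequence from (\ref{sequence1}) presents $\mathfrak{a}\curlywedge\mathfrak{g}$ as the quotient of $\mathfrak{s}\curlywedge\mathfrak{f}$ by the images of \emph{both} $\mathfrak{r}\curlywedge\mathfrak{f}$ and $\mathfrak{s}\curlywedge\mathfrak{r}$, and absorbing the second image into the first requires careful bookkeeping of the two families of generators of the exterior product and of the $\square$-relations; a computation in the tensor product alone would fail here. Once this is settled, the cone-homology identifications and the matching of the connecting maps with the already established sequence of Proposition \ref{proposition2} are routine, and the overlap with Proposition \ref{proposition2} confirms that the three new terms splice in correctly, with exactness holding at $HL_3(\mathfrak{h})$ and at $\Ker\theta_{\mathfrak{a},\mathfrak{g}}$.
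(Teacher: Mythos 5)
Your proposal is correct, and its mathematical core coincides exactly with the paper's: the same free presentation giving $\mathfrak{r}\subseteq\mathfrak{s}\subseteq\mathfrak{f}$, the same use of Theorem \ref{theorem2} and the Hopf formula (Theorem \ref{theorem_Hopf2}), and the same key identification of $\mathfrak{a}\curlywedge\mathfrak{g}$ as the quotient of $\mathfrak{s}\curlywedge\mathfrak{f}$ by the image of $\mathfrak{r}\curlywedge\mathfrak{f}$, obtained from \eqref{sequence1} by absorbing the image of $\mathfrak{s}\curlywedge\mathfrak{r}$ into that of $\mathfrak{r}\curlywedge\mathfrak{f}$. The paper passes over this absorption with the single phrase ``this sequence yields the following exact sequence''; you correctly isolate it as the crux and supply the right mechanism (the $\square$-relations of $\mathfrak{s}\curlywedge\mathfrak{f}$ let one move an entry lying in $\mathfrak{r}\subseteq\mathfrak{s}$ from the $\mathfrak{f}$-slot to the $\mathfrak{s}$-slot, landing in the image of $\mathfrak{r}\curlywedge\mathfrak{f}$), so on this point your write-up is more complete than the paper's. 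Where you genuinely diverge is the assembly: the paper applies the Snake Lemma to the two-row diagram with vertical maps $\theta_{\mathfrak{r},\mathfrak{f}}$, $\theta_{\mathfrak{s},\mathfrak{f}}$, $\theta_{\mathfrak{a},\mathfrak{g}}$, checks that the connecting map lands in $(\mathfrak{r}\cap[\mathfrak{f},\mathfrak{f}])/[\mathfrak{r},\mathfrak{f}]$, and then splices the resulting five-term piece with Proposition \ref{proposition2}; you instead form the mapping cone of the chain map between the three-term complexes $\mathfrak{r}\curlywedge\mathfrak{f}\to\mathfrak{r}\to\mathfrak{f}^{\ab}$ and $\mathfrak{s}\curlywedge\mathfrak{f}\to\mathfrak{s}\to\mathfrak{f}^{\ab}$ and read off all eight terms from one long exact sequence. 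Your packaging buys two things: it is self-contained (Proposition \ref{proposition2} is needed only as a consistency check, not as an input), and the inclusion step that the paper dismisses as ``easy to see'' becomes automatic, since the degree-one part of the cone differential already records the map to $\mathfrak{f}^{\ab}$; the identifications $H_2(C)\cong\theta_{\mathfrak{s},\mathfrak{f}}^{-1}(\mathfrak{r})/\Img\phi\cong\Ker\theta_{\mathfrak{a},\mathfrak{g}}$, $H_1(C)\cong\mathfrak{s}/([\mathfrak{s},\mathfrak{f}]+\mathfrak{r})\cong\mathfrak{a}/[\mathfrak{a},\mathfrak{g}]$ and $H_0(C)=0$ that you state all check out. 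What the paper's route buys in exchange is brevity: given that Proposition \ref{proposition2} is already established, the Snake Lemma argument is shorter and keeps the maps visibly natural.
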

\begin{proof} Any free presentation $0 \to \mathfrak{r} \to \mathfrak{f} \overset{\rho}{\to} \mathfrak{g} \to 0$ of $\mathfrak{g}$ produces a free presentation $0\to \mathfrak{s}\to \mathfrak{f}\overset{\tau\circ\rho}{\to} \mathfrak{h} \to 0$ of $\mathfrak{h}$ and
an extension $0 \to \mathfrak{r} \to \mathfrak{s} \to \mathfrak{a} \to 0$ of Leibniz algebras.
By (\ref{sequence1}) we have the following exact sequence
\[
\mathfrak{s}\curlywedge \mathfrak{r} \times \mathfrak{r}\curlywedge \mathfrak{f} \to \mathfrak{s}\curlywedge \mathfrak{f} \to
\mathfrak{a}\curlywedge \mathfrak{g} \to 0.
\]
This sequence yields the following exact sequence
\[
\mathfrak{r}\curlywedge \mathfrak{f} \to \mathfrak{s}\curlywedge \mathfrak{f} \to \mathfrak{a}\curlywedge \mathfrak{g} \to 0.
\]
Thus, we have the following commutative diagram with exact rows
\begin{equation*}
\xymatrix@+20pt{
&\mathfrak{r}\curlywedge \mathfrak{f}\ \ar@{->}[r]
\ar@{->}[d]_{\theta_\mathfrak{r, f}}
 &\mathfrak{s}\curlywedge \mathfrak{f}\ar@{->}[r]
\ar@{->}[d]_{\theta_\mathfrak{s, f}}
&\mathfrak{a}\curlywedge \mathfrak{g}\ar@{->}[r]
\ar@{->}[d]_{\theta_\mathfrak{a, g}}
&0 \\
0\ \ar@{->}[r]
&\mathfrak{r}\ \ar@{->}[r]
 &\mathfrak{s}\ar@{->}[r]
&\mathfrak{a}\ar@{->}[r]
&0 .
}\end{equation*}
The Snake Lemma and Theorem \ref{theorem2} imply the following exact sequence
\[
HL_3(\mathfrak{g})\to HL_3(\mathfrak{h})\to \Ker \big( \theta_\mathfrak{a, g} \colon \mathfrak{a}\curlywedge \mathfrak{g}
\to \mathfrak{a}\big) \to \mathfrak{r}/[\mathfrak{r}, \mathfrak{f}]\to \mathfrak{s}/[\mathfrak{s}, \mathfrak{f}].
\]
It is easy to see that
\[
\Img \big (\Ker \big( \theta_\mathfrak{a, g} \colon \mathfrak{a}\curlywedge \mathfrak{g}
\to \mathfrak{a}\big) \to \mathfrak{r}/[\mathfrak{r}, \mathfrak{f}]\big) \subseteq
\frac{\mathfrak{r}\cap [\mathfrak{f}, \mathfrak{f}]}{[\mathfrak{r}, \mathfrak{f}]}.
\]
Therefore, we have an exact sequence:
\[
HL_3(\mathfrak{g})\to HL_3(\mathfrak{h})\to \Ker \big( \theta_\mathfrak{a, g} \colon \mathfrak{a}\curlywedge \mathfrak{g}
\to \mathfrak{a}\big) \to \frac{\mathfrak{r}\cap [\mathfrak{f}, \mathfrak{f}]}{[\mathfrak{r}, \mathfrak{f}]}
\to \frac{\mathfrak{s}\cap [\mathfrak{f}, \mathfrak{f}]}{[\mathfrak{s}, \mathfrak{f}]}.
\]
Using the Hopf formula we get an exact sequence:
\[
HL_3(\mathfrak{g})\to HL_3(\mathfrak{h})\to \Ker \big( \theta_\mathfrak{a, g} \colon \mathfrak{a}\curlywedge \mathfrak{g}
\to \mathfrak{a}\big) \to HL_2(\mathfrak{g})\to HL_2(\mathfrak{h}).
\]
The rest of the proof follows from Proposition \ref{proposition2}.
\end{proof}

\begin{corollary} (see \cite{CaPi00}) Let $0\to \mathfrak{a}\to \mathfrak{g}\to \mathfrak{h} \to 0$ be a central extension of Leibniz
algebras, i.e. $[a, x]=[x, a]=0$ for all $a\in \mathfrak{a}$ and $x\in \mathfrak{g}$. Then there is the following exact sequence
\[
HL_3(\mathfrak{g})\to HL_3(\mathfrak{h})\to \Coker \left( \mathfrak{a}\otimes \mathfrak{a} \overset{\eta}{\to}
\mathfrak{a}\otimes \frac{\mathfrak{g}}{[\mathfrak{g}, \mathfrak{g}]} \oplus
\frac{\mathfrak{g}}{[\mathfrak{g}, \mathfrak{g}]}\otimes \mathfrak{a}\right) \to HL_2(\mathfrak{g})\to HL_2(\mathfrak{h})\to 0,
\]
where $\eta \colon \mathfrak{a}\otimes \mathfrak{a} \to \mathfrak{a}\otimes \frac{\mathfrak{g}}{[\mathfrak{g}, \mathfrak{g}]}\oplus
\frac{\mathfrak{g}}{[\mathfrak{g}, \mathfrak{g}]}\otimes \mathfrak{a}$ is given by $a\otimes b\mapsto (a\otimes \overline{b}, -\overline{a}\otimes b)$,
where $\overline{a}=a+[\mathfrak{g}, \mathfrak{g}]$ and $\overline{b}=b+[\mathfrak{g}, \mathfrak{g}]$ for each $a, b\in \mathfrak{a}$.
\end{corollary}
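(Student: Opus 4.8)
The plan is to specialise Proposition \ref{proposition3} to the given central extension and then to identify the two terms of that sequence which involve $\mathfrak{a}$. Centrality means $[\mathfrak{a},\mathfrak{g}]=0$; hence on the one hand $\mathfrak{a}/[\mathfrak{a},\mathfrak{g}]=\mathfrak{a}$, and on the other hand the crossed module $\theta_{\mathfrak{a},\mathfrak{g}}\colon\mathfrak{a}\curlywedge\mathfrak{g}\to\mathfrak{a}$ is the zero map, since it sends each generator $a\curlywedge x$ to $[a,x]=0$ and each $x\curlywedge a$ to $[x,a]=0$. Consequently $\Ker(\theta_{\mathfrak{a},\mathfrak{g}})=\mathfrak{a}\curlywedge\mathfrak{g}$, and the whole argument reduces to the computational task of identifying $\mathfrak{a}\curlywedge\mathfrak{g}$ with $\Coker(\eta)$.

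To compute $\mathfrak{a}\curlywedge\mathfrak{g}$ I would first simplify the tensor product $\mathfrak{a}\star\mathfrak{g}$. Since the extension is central, all four mutual actions vanish: ${}^{x}a=a^{x}=0$ and ${}^{a}x=x^{a}=0$ for $a\in\mathfrak{a}$ and $x\in\mathfrak{g}$. Substituting this into the defining relations collapses them drastically: relations (5a)--(5d) force the bracket on $\mathfrak{a}\star\mathfrak{g}$ to be trivial, (4a)--(4b) become vacuous, and (3a), (3d) reduce to $a*[x,x']=0$ and $[x,x']*a=0$ (while (3b), (3c) hold trivially because $\mathfrak{a}$ is abelian). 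Together with bilinearity this shows that both $a*x$ and $x*a$ depend only on the class $\overline{x}=x+[\mathfrak{g},\mathfrak{g}]$, giving a natural isomorphism
\[
\mathfrak{a}\star\mathfrak{g}\;\cong\;\Big(\mathfrak{a}\otimes\frac{\mathfrak{g}}{[\mathfrak{g},\mathfrak{g}]}\Big)\oplus\Big(\frac{\mathfrak{g}}{[\mathfrak{g},\mathfrak{g}]}\otimes\mathfrak{a}\Big),\qquad a*x\mapsto(a\otimes\overline{x},0),\quad x*a\mapsto(0,\overline{x}\otimes a).
\]
Under this isomorphism the subspace $\mathfrak{a}\square\mathfrak{g}$, spanned by the elements $i_1(c)*i_2(c')-i_2(c)*i_1(c')$ with $c,c'\in\mathfrak{a}$, corresponds to the span of $(c\otimes\overline{c'},-\overline{c}\otimes c')=\eta(c\otimes c')$; that is, $\mathfrak{a}\square\mathfrak{g}=\Img(\eta)$. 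Passing to the quotient then gives exactly $\mathfrak{a}\curlywedge\mathfrak{g}=(\mathfrak{a}\star\mathfrak{g})/(\mathfrak{a}\square\mathfrak{g})\cong\Coker(\eta)$, which is the identification I want.

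Plugging $\Ker(\theta_{\mathfrak{a},\mathfrak{g}})\cong\Coker(\eta)$ and $\mathfrak{a}/[\mathfrak{a},\mathfrak{g}]=\mathfrak{a}$ into the sequence of Proposition \ref{proposition3} yields at once the exactness of $HL_3(\mathfrak{g})\to HL_3(\mathfrak{h})\to\Coker(\eta)\to HL_2(\mathfrak{g})\to HL_2(\mathfrak{h})$. The step I expect to be the main obstacle is the exactness at the final $HL_2(\mathfrak{h})$, i.e. the surjectivity of $HL_2(\mathfrak{g})\to HL_2(\mathfrak{h})$. By exactness of the longer sequence of Proposition \ref{proposition3} this is equivalent to the vanishing of the connecting map $HL_2(\mathfrak{h})\to\mathfrak{a}$, whose image equals $\Ker(\mathfrak{a}\to HL_1(\mathfrak{g}))=\mathfrak{a}\cap[\mathfrak{g},\mathfrak{g}]$. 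Establishing that this connecting map vanishes, equivalently that $\mathfrak{a}\to HL_1(\mathfrak{g})$ is injective, is the delicate point, and it is here that I would check carefully whether centrality alone suffices or whether (as in the setting of \cite{CaPi00}) an additional condition on the extension is required; the remaining identifications are natural and compatible with the maps of Proposition \ref{proposition3}.
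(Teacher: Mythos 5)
Your argument is, for everything it actually establishes, the same as the paper's: specialise Proposition \ref{proposition3} to the central extension, note that $\theta_{\mathfrak{a},\mathfrak{g}}=0$ and $\mathfrak{a}/[\mathfrak{a},\mathfrak{g}]=\mathfrak{a}$, and identify $\mathfrak{a}\star\mathfrak{g}\cong\mathfrak{a}\otimes\mathfrak{g}^{\ab}\oplus\mathfrak{g}^{\ab}\otimes\mathfrak{a}$ and $\mathfrak{a}\square\mathfrak{g}=\Img\eta$, whence $\Ker\theta_{\mathfrak{a},\mathfrak{g}}=\mathfrak{a}\curlywedge\mathfrak{g}\cong\Coker\eta$. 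The only cosmetic difference is that the paper quotes \cite[Proposition 4.2]{Gn99} for the structure of the tensor product with trivial actions, while you rederive it from the defining relations; the content is identical.

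The issue you flag at the end is not a weakness of your write-up but a genuine gap, and it sits in the paper's own proof. The paper concludes ``by Proposition \ref{proposition3}'', yet that proposition only yields exactness of
\[
HL_3(\mathfrak{g})\to HL_3(\mathfrak{h})\to\Coker\eta\to HL_2(\mathfrak{g})\to HL_2(\mathfrak{h})\to\mathfrak{a}\to HL_1(\mathfrak{g})\to HL_1(\mathfrak{h})\to 0,
\]
and provides nothing that would make $HL_2(\mathfrak{g})\to HL_2(\mathfrak{h})$ surjective. As you observe, surjectivity is equivalent to the vanishing of the connecting map $HL_2(\mathfrak{h})\to\mathfrak{a}$, whose image is $\Ker\big(\mathfrak{a}\to\mathfrak{g}^{\ab}\big)=\mathfrak{a}\cap[\mathfrak{g},\mathfrak{g}]$; so the printed sequence is exact at $HL_2(\mathfrak{h})$ if and only if $\mathfrak{a}\cap[\mathfrak{g},\mathfrak{g}]=0$, which centrality alone does not give. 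A concrete counterexample: let $\mathfrak{g}$ be the three-dimensional Heisenberg algebra with $[x,y]=z=-[y,x]$ and $\mathfrak{a}=\langle z\rangle$, so that $\mathfrak{h}$ is two-dimensional abelian. Then $HL_2(\mathfrak{h})=\mathfrak{h}\otimes\mathfrak{h}$ is four-dimensional, while the image of $HL_2(\mathfrak{g})\to HL_2(\mathfrak{h})$ is the projection of the space of Leibniz $2$-cycles of $\mathfrak{g}$, namely the three-dimensional span of $\overline{x}\otimes\overline{x}$, $\overline{y}\otimes\overline{y}$, $\overline{x}\otimes\overline{y}+\overline{y}\otimes\overline{x}$ (bars denoting images in $\mathfrak{h}$). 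Hence the terminal ``$\to 0$'' of the stated corollary is false in general; the correct conclusion of this method is the eight-term sequence displayed above (continuing through $\mathfrak{a}\to HL_1(\mathfrak{g})\to HL_1(\mathfrak{h})\to 0$ rather than truncating), unless one adds the hypothesis $\mathfrak{a}\cap[\mathfrak{g},\mathfrak{g}]=0$. Your decision to stop where you stopped, and to flag the remaining claim as doubtful, is exactly right.
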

\begin{proof}
Under the required conditions, the Leibniz algebras  $\mathfrak{a}$ and $\mathfrak{g}$ act trivially on each other. Then,  by \cite[Proposition 4.2]{Gn99},  we have a natural isomorphism
\[
\mathfrak{a}\star \mathfrak{g}\cong \mathfrak{a}\otimes \frac{\mathfrak{g}}{[\mathfrak{g}, \mathfrak{g}]}\oplus
\frac{\mathfrak{g}}{[\mathfrak{g}, \mathfrak{g}]}\otimes \mathfrak{a} .
\]
Since $\mathfrak{a}\curlywedge \mathfrak{g}$ is obtained from $\mathfrak{a}\star \mathfrak{g}$ by killing the elements of the form $a*i(b)-i(a)*b$, where $a, b\in \mathfrak{a}$ and $i\colon \mathfrak{a}\to \mathfrak{g}$ is the natural inclusion, we get an isomorphism
\[
\mathfrak{a}\curlywedge \mathfrak{g}\cong\Coker \Big( \eta \colon \mathfrak{a}\otimes \mathfrak{a} \to
\mathfrak{a}\otimes \frac{\mathfrak{g}}{[\mathfrak{g}, \mathfrak{g}]}\oplus
\frac{\mathfrak{g}}{[\mathfrak{g}, \mathfrak{g}]}\otimes \mathfrak{a}\Big).
\]
Then, by Proposition \ref{proposition3} we conclude the required result.
\end{proof}

\

\section{Relationship to the universal quadratic functor}\label{S:quad}


In this section $\mathbb{K}$ is a commutative ring with identity (not necessarily a field). Keeping in mind Remark \ref{R: ring},  we will use only those constructions and facts from the previous sections, which do not require $\mathbb{K}$ to be a field.

In the case of Lie algebras, there is a connection between the non-abelian exterior product of Lie algebras and Whitehead's universal quadratic functor (\cite{Ell91}). We can observe it in the Leibniz algebras case too.

\begin{definition}[\cite{SiTy}]
	Let $A$ be a $\mathbb{K}$-module and consider the endofunctor that sends $A$ to the $\mathbb{K}$-module generated by the symbols $\gamma(a)$ with $a \in A$, quotient by the submodule generated by
	\begin{align*}
	k^2\gamma(a) &= \gamma(ka), \\
	\gamma(a + b + c) + \gamma(a) + \gamma(b) + \gamma(c) &= \gamma(a + b) + \gamma(a + c) + \gamma(b + c), \\
	\gamma(ka + b)  + k\gamma(a) + k\gamma(b) &= k\gamma(a + b) + \gamma(ka) + \gamma(b),
	\end{align*}
	for all $k \in \mathbb{K}$ and $a, b, c \in A$.
	This functor denoted by $\Gamma$ is called \emph{universal quadratic functor}.
\end{definition}

\begin{proposition}[\cite{SiTy}]\label{P:basis}
	Let $I$ be a well-ordered set and $A$ be a free $\mathbb{K}$-module with basis $\{e_i\}_{i \in I}$. Then $\Gamma(A)$ is a free $\mathbb{K}$-module with basis
	\[
	\{\gamma(e_i)\}_{i \in I} \cup \{ \gamma(e_i + e_j) - \gamma(e_i) - \gamma(e_j) \}_{i < j}
	\]	
\end{proposition}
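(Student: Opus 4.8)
The plan is to prove the two stated properties of the set
\[
B = \{\gamma(e_i)\}_{i \in I} \cup \{\gamma(e_i + e_j) - \gamma(e_i) - \gamma(e_j)\}_{i < j}
\]
separately: first that $B$ spans $\Gamma(A)$ over $\mathbb{K}$, and then that $B$ is $\mathbb{K}$-linearly independent, hence a basis. For the spanning part, I would introduce the symmetric bilinear-like expression
\[
[a \mid b] = \gamma(a + b) - \gamma(a) - \gamma(b),
\]
and first extract from the three defining relations its elementary properties: biadditivity in each slot and the $\mathbb{K}$-bilinearity $[ka \mid b] = k[a \mid b]$ (the latter coming from the third relation, which asserts exactly $[ka \mid b] = k[a \mid b]$ after rearrangement). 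The first relation gives $\gamma(ka) = k^2\gamma(a)$. With these in hand, an arbitrary $\gamma(a)$ for $a = \sum_i k_i e_i$ (a finite sum) expands by repeated use of biadditivity into $\sum_i k_i^2 \gamma(e_i) + \sum_{i<j} k_i k_j [e_i \mid e_j]$, which is visibly a $\mathbb{K}$-linear combination of the elements of $B$. Since $\Gamma(A)$ is generated by the $\gamma(a)$, this shows $B$ spans.

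The harder and more delicate part is linear independence, and this is where I expect the main obstacle to lie. The difficulty is that the defining relations of $\Gamma$ are imposed on a very large free module (generated by \emph{all} symbols $\gamma(a)$, $a \in A$), so to prove independence one cannot argue inside $\Gamma(A)$ itself; one must construct an explicit target module $M$ together with a well-defined set map $A \to M$ satisfying the quadratic relations, such that the images of the elements of $B$ are manifestly independent in $M$. The natural candidate for $M$ is a free $\mathbb{K}$-module on the index set $I \sqcup \{(i,j) : i < j\}$, with a prescribed quadratic map $q \colon A \to M$ defined on $a = \sum_i k_i e_i$ by
\[
q(a) = \sum_i k_i^2\, u_i + \sum_{i<j} k_i k_j\, u_{ij},
\]
where $\{u_i\} \cup \{u_{ij}\}$ is the chosen basis of $M$. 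I would verify directly that $q$ respects the three defining relations of $\Gamma$ — this is a finite, if slightly tedious, polynomial-identity check in the $k$'s — so that $q$ factors through a $\mathbb{K}$-linear map $\bar q \colon \Gamma(A) \to M$. Then $\bar q(\gamma(e_i)) = u_i$ and $\bar q(\gamma(e_i + e_j) - \gamma(e_i) - \gamma(e_j)) = u_{ij}$, and since the $u_i, u_{ij}$ form a basis of $M$ they are independent, forcing the elements of $B$ to be independent as well.

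Combining the two halves, $B$ is a spanning set that is linearly independent, hence a basis of $\Gamma(A)$, and in particular $\Gamma(A)$ is free. The one subtlety I would flag is the well-ordering hypothesis on $I$: it is used only to give an unambiguous, non-redundant indexing of the off-diagonal basis vectors by pairs $i < j$ (so that $[e_i \mid e_j]$ and $[e_j \mid e_i]$ are not double-counted), which is exactly what makes the expansion in the spanning step and the definition of $q$ in the independence step consistent. Since the whole argument is an identity computation together with one explicit construction of a well-defined quadratic map, I do not anticipate any genuinely hard step beyond carefully checking that $q$ is compatible with all three relations.
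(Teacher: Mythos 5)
Your proof is correct. For calibration: the paper does not prove this proposition at all — it is quoted from Simson and Tyc \cite{SiTy} — so there is no internal proof to compare against; your argument stands as a self-contained justification of a fact the paper only cites. The route you take is the standard one. For spanning, the polarization term $[a\mid b]=\gamma(a+b)-\gamma(a)-\gamma(b)$ is symmetric, biadditive by the second defining relation, and satisfies $[ka\mid b]=k[a\mid b]$ by the third, which together with $\gamma(ka)=k^2\gamma(a)$ yields $\gamma\bigl(\sum_i k_ie_i\bigr)=\sum_i k_i^2\gamma(e_i)+\sum_{i<j}k_ik_j[e_i\mid e_j]$, so your set $B$ spans. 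For independence, your construction of the comparison module $M$ and the quadratic map $q$ is exactly the right device, and the deferred verification is indeed routine: setting $\beta(a,b)=q(a+b)-q(a)-q(b)$, one computes $\beta\bigl(\sum_ik_ie_i,\sum_il_ie_i\bigr)=\sum_i 2k_il_i\,u_i+\sum_{i<j}(k_il_j+k_jl_i)\,u_{ij}$, which is bilinear in the coordinate vectors; the three relations for $q$ reduce precisely to $q(ka)=k^2q(a)$, $\beta(a+b,c)=\beta(a,c)+\beta(b,c)$, and $\beta(ka,b)=k\beta(a,b)$, all immediate from these formulas. Hence $q$ factors through a linear map $\overline{q}\colon\Gamma(A)\to M$ carrying $B$ bijectively onto the basis $\{u_i\}\cup\{u_{ij}\}$ of $M$, which forces $B$ to be linearly independent. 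Your closing remark about the well-ordering is also accurate: it serves only to pick one representative of each unordered pair $\{i,j\}$, avoiding double-counting in both halves of the argument.
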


Let $\eta \colon \mm \to \Gg$ and $\mu \colon \nn \to \Gg$ be two crossed modules of Leibniz algebras. As we know there are induced actions of $\mm$ and $\nn$  on each other via the action of $\Gg$.
Let $\mm \times_{\Gg} \nn = \{ (m, n) \mid \eta(m) = \mu(n) \}$ be the pullback of $\eta$ and $\mu$. It is a Leibniz subalgebra of $\mm \oplus \nn$. Let $\langle \mm, \nn \rangle = \{(\tau_{\mm}(x), \tau_{\nn}(x))  \mid x \in \mm \star \nn \}$, where $\tau_{\mm}\colon \mm \star \nn\to \mm $ and $\tau_{\nn}\colon \mm \star \nn\to \nn$ are homomorphisms introduced in Subsection \ref{S:tensor}.

\begin{proposition}\label{P:ideal}
$\langle \mm, \nn \rangle$ is an ideal of $\mm \times_{\Gg} \nn$  and the quotient $(\mm \times_{\Gg} \nn) / \langle \mm, \nn \rangle$ is abelian.
\end{proposition}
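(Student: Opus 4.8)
The plan is to reduce both assertions to a single computation showing that every bracket of two elements of $\mm \times_{\Gg} \nn$ already lies in $\langle \mm, \nn \rangle$; once that is in hand, both the ideal property and the abelianness of the quotient are formal consequences.

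First I would record that $\langle \mm, \nn \rangle$ is a $\mathbb{K}$-submodule sitting inside $\mm \times_{\Gg} \nn$. The pair $(\tau_{\mm}, \tau_{\nn})$ assembles into a single map $\varphi \colon \mm \star \nn \to \mm \oplus \nn$, $x \mapsto (\tau_{\mm}(x), \tau_{\nn}(x))$, which is a homomorphism of Leibniz algebras because each component is. To see that its image lands in the pullback it suffices, since $\eta\tau_{\mm}$ and $\mu\tau_{\nn}$ are homomorphisms $\mm \star \nn \to \Gg$, to check equality on generators: for $m * n$ one has $\eta(\tau_{\mm}(m*n)) = \eta(m^{\mu(n)}) = [\eta(m), \mu(n)] = \mu(\;^{\eta(m)}n) = \mu(\tau_{\nn}(m*n))$, using the induced-action conventions $m^n = m^{\mu(n)}$, $\;^m n = \;^{\eta(m)}n$ together with the crossed module identities, and similarly for $n * m$. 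Hence $\langle \mm, \nn \rangle = \Img \varphi$ is a Leibniz subalgebra of $\mm \times_{\Gg} \nn$.

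Next comes the crux. Take $(m_1, n_1), (m_2, n_2) \in \mm \times_{\Gg} \nn$, so $\eta(m_i) = \mu(n_i)$. As the bracket in $\mm \times_{\Gg} \nn \subseteq \mm \oplus \nn$ is componentwise, I must show $([m_1, m_2], [n_1, n_2]) \in \langle \mm, \nn \rangle$. The candidate preimage under $\varphi$ is $x = m_1 * n_2$: one computes $m_1^{n_2} = m_1^{\mu(n_2)} = m_1^{\eta(m_2)} = [m_1, m_2]$ and $\;^{m_1}n_2 = \;^{\eta(m_1)}n_2 = \;^{\mu(n_1)}n_2 = [n_1, n_2]$, where the first equality in each chain is the definition of the induced action, the second is the pullback condition $\eta(m_i) = \mu(n_i)$, and the last is the crossed module relation. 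Thus $\varphi(m_1 * n_2) = ([m_1, m_2], [n_1, n_2])$, giving $[\mm \times_{\Gg} \nn, \mm \times_{\Gg} \nn] \subseteq \langle \mm, \nn \rangle$. This step is the only one requiring genuine care, since it is where the action conventions and the crossed module axioms must be invoked correctly; everything else is bookkeeping.

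Finally I would conclude formally. Because $\langle \mm, \nn \rangle$ is a submodule of $\mm \times_{\Gg} \nn$ containing every commutator, for $z \in \mm \times_{\Gg} \nn$ and $w \in \langle \mm, \nn \rangle \subseteq \mm \times_{\Gg} \nn$ both $[z, w]$ and $[w, z]$ lie in $[\mm \times_{\Gg} \nn, \mm \times_{\Gg} \nn] \subseteq \langle \mm, \nn \rangle$, so $\langle \mm, \nn \rangle$ is an ideal. The same containment shows that the induced bracket on $(\mm \times_{\Gg} \nn)/\langle \mm, \nn \rangle$ vanishes, i.e. the quotient is abelian.
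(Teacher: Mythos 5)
Your proposal is correct, and it reorganizes the paper's argument rather than reproducing it. The paper gives two independent element computations: one showing directly that bracketing a generator $(\tau_{\mm}(m*n),\tau_{\nn}(m*n))$ of $\langle \mm,\nn\rangle$ with any $(m',n')\in\mm\times_{\Gg}\nn$ lands back in $\langle\mm,\nn\rangle$ (indeed it equals $(\tau_{\mm}(m^n*n'),\tau_{\nn}(m^n*n'))$), which gives the ideal property; and a second one showing $[(m,n),(m',n')]=(\tau_{\mm}(m*n'),\tau_{\nn}(m*n'))$, which gives abelianness of the quotient. Your crux computation $\varphi(m_1*n_2)=([m_1,m_2],[n_1,n_2])$ is exactly the paper's second computation; the difference is that you then obtain the ideal property for free, from the general fact that a $\mathbb{K}$-submodule containing all brackets of $\mm\times_{\Gg}\nn$ is automatically a two-sided ideal with abelian quotient. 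This is more economical (one computation instead of two), and you are also more careful on a point the paper leaves implicit: that $\Img\varphi$ really lies in the pullback, i.e.\ $\eta\circ\tau_{\mm}=\mu\circ\tau_{\nn}$, checked on generators via the crossed module axioms, and that $\langle\mm,\nn\rangle$ is a submodule (indeed a subalgebra), being the image of the homomorphism $\varphi=(\tau_{\mm},\tau_{\nn})$. What the paper's separate first computation buys in exchange is an explicit preimage formula, $[\varphi(m*n),(m',n')]=\varphi(m^n*n')$, exhibiting a concrete element of $\mm\star\nn$ mapping onto each such bracket; for the statement as given, your route suffices.
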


\begin{proof}
	The assertion that $\langle \mm, \nn \rangle$ is an ideal of $\mm \times_{\Gg} \nn$ follows by straightforward calculations. For instance, given any $m\in \mm $, $n\in \nn$ and $(m', n')\in \mm \times_{\Gg} \nn$ we get
	\begin{align*}
		[(\tau_{\mm}(m*n), \tau_{\nn}(m*n)), (m', n')] &= [(m^n, \;^mn), (m', n')] \\
		{} &= ([m^n, m'], [^mn, n']) \\
		{} &= ((m^n)^{m'}, \;^{\mu(^mn)}n') \\
		{} &= ( (m^n)^{n'}, \;^{(m^n)}n' ) \\
		{} &= (\tau_{\mm}(m^n * n'), \tau_{\nn}(m^n * n')).
	\end{align*}
	
	Now take any $(m, n)$, $(m', n') \in \mm \times_{\Gg} \nn$, then we have
	\[
	[(m, n), (m', n')] = ([m, m'], [n, n']) = (m^{m'}, \; ^nn') = (m^{n'}, \;^mn') =  (\tau_{\mm}(m* n'), \tau_{\nn}(m* n')),
	\]
	showing that $(\mm \times_{\Gg} \nn) / \langle \mm, \nn \rangle$ is abelian.
\end{proof}

\begin{proposition}\label{P:complex}
	There is a well defined homomorphism of Leibniz algebras
	\[\xymatrix{
		\Gamma\bigg(\dfrac{\mm \times_{\Gg} \nn}{\langle \mm, \nn \rangle}\bigg) \ar[r]^-{\psi} &  \mm \star  \nn ,
	}
	\]
	 given by  $\psi(\gamma ((m, n)+\langle \mm, \nn \rangle )) = m * n - n*m$.
\end{proposition}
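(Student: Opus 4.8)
The plan is to build $\psi$ through the presentation of the quadratic functor. Write $A = (\mm \times_{\Gg} \nn)/\langle \mm, \nn \rangle$, which is a $\mathbb{K}$-module by Proposition~\ref{P:ideal}, and regard $\Gamma(A)$ as an abelian Leibniz algebra. Since $\Gamma(A)$ is presented by the generators $\gamma(\bar a)$ together with the three quadratic relations, defining a $\mathbb{K}$-linear map $\psi$ out of it amounts to specifying the images $\psi\big(\gamma((m,n)+\langle\mm,\nn\rangle)\big) = m*n-n*m$ and checking they respect those relations; equivalently, that $f\colon \mm\times_{\Gg}\nn \to \mm\star\nn$, $f(m,n)=m*n-n*m$, is a quadratic map that descends to $A$. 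Accordingly I would verify, in turn, that $f$ is quadratic, that it factors through the quotient by $\langle\mm,\nn\rangle$, and finally that the resulting $\psi$ is a homomorphism of Leibniz algebras.

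That $f$ is quadratic follows from the bilinearity relations of $\mm\star\nn$. Using (2a)--(2d) one gets
\[
f(m+m',\,n+n') = f(m,n)+f(m',n') + b\big((m,n),(m',n')\big),
\]
where $b\big((m,n),(m',n')\big) = m*n' + m'*n - n*m' - n'*m$ is symmetric and bilinear, while (1a)--(1b) give $f(km,kn)=k^2 f(m,n)$. Granting these two facts, the three defining relations of $\Gamma$ follow by a formal polarization computation, so $f$ factors through $\Gamma$ before passing to the quotient.

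The heart of the matter is descent to $A$, namely that both $f$ and its polarization $b$ vanish on $\langle\mm,\nn\rangle$. Since $\langle\mm,\nn\rangle$ is the $\mathbb{K}$-span of the elements $(\tau_{\mm}(g),\tau_{\nn}(g))$ for $g$ a generator $m_0*n_0$ or $n_0*m_0$, it suffices to treat these. For $z=(m_0^{n_0},{}^{m_0}n_0)$, relation (5a) gives $m_0^{n_0}*{}^{m_0}n_0 = [m_0*n_0,m_0*n_0] = {}^{m_0}n_0*m_0^{n_0}$, so $f(z)=0$; relation (5b) handles $n_0*m_0$ symmetrically. The main obstacle is the vanishing of the cross term $b\big((m,n),z\big)$ for $(m,n)$ in the pullback, and this is precisely where the condition $\eta(m)=\mu(n)$ is indispensable: it forces the induced actions of $m$ and $n$ to coincide, yielding the crossed-module identities ${}^{m}n_0=[n,n_0]$ and $m_0^{n}=[m_0,m]$. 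Expanding $b\big((m,n),z\big)$ into its four summands, I would normalise the action terms with (4a) and then apply the mixed relations (3a), (3c) and (3d); the identities above convert $\mm$-entries into $\nn$-entries, collapsing the four terms into two cancelling pairs $\{m*{}^{m_0}n_0,\ -n*m_0^{n_0}\}$ and $\{m_0^{n_0}*n,\ -{}^{m_0}n_0*m\}$, so that $b\big((m,n),z\big)=0$. This pairwise collapse, which interlocks relations (3) and (4) with the crossed-module axioms, is the calculation I expect to be the most delicate, the remaining generator $n_0*m_0$ being symmetric.

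Finally, $\psi$ is a homomorphism of Leibniz algebras for a soft reason. When $\eta(m)=\mu(n)$, the element $m*n-n*m$ is of the form $m*n'-n*m'$ with $m'=m$ and $n'=n$, hence lies in $\mm\square\nn$; thus $\psi(\Gamma(A))\subseteq \mm\square\nn$. By Proposition~\ref{P:cen}, $\mm\square\nn$ is central in $\mm\star\nn$, so every bracket of elements of the image is zero. Since $\Gamma(A)$ carries the trivial bracket, both $\psi([u,v])$ and $[\psi(u),\psi(v)]$ vanish, and therefore $\psi$ preserves the Leibniz bracket.
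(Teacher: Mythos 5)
Your proposal is correct and takes essentially the same route as the paper: the paper likewise (i) treats the defining relations of $\Gamma$ as a routine check, (ii) reduces well-definedness modulo $\langle \mm,\nn\rangle$ to the vanishing of exactly your cross term $m*{}^{m_0}n_0 + m_0^{n_0}*n - n*m_0^{n_0} - {}^{m_0}n_0*m$ for $\eta(m)=\mu(n)$, proved from relations (3a)--(4b) together with the pullback identities ${}^mn_0=[n,n_0]$, $n_0^m=[n_0,n]$, ${}^nm_0=[m,m_0]$, $m_0^n=[m_0,m]$, and (iii) deduces the homomorphism property from the centrality of $\mm\square\nn$ (Proposition \ref{P:cen}). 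The pairwise collapse you flagged as the delicate point does hold: for instance
$m*{}^{m_0}n_0 \overset{(4\textrm{a})}{=} -m*n_0^{m_0} \overset{(3\textrm{c})}{=} [m,m_0]*n_0 - {}^mn_0*m_0 = [m,m_0]*n_0 - [n,n_0]*m_0 \overset{(3\textrm{d})}{=} n*m_0^{n_0}$,
and similarly $m_0^{n_0}*n={}^{m_0}n_0*m$ via (3a) and (3c), so your outline completes to a proof differing from the paper's only in cancelling the four terms two at a time rather than all at once.
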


\begin{proof}
It is easy to check that  $\psi$ preserves the defining relations of $\Gamma$. Thus, it suffices to show that $ ( m'+\tau_{\mm}(x)) *( n'+\tau_{\nn}(x))- ( n'+\tau_{\nn}(x)) *(m'+ \tau_{\mm}(x))=    m' * n' - n'*m'$
for each $x\in \mm \star \nn$. This  reduces to prove that $m'*\;^mn - \;^mn * m' + m^n*n' - n'*m^n = 0$. Using the defining relations of the non-abelian tensor product we have
\begin{align*}
	m'*\;^mn - \;^mn*m' + &m^n*n' - n'*m^n  \\
	{}=& [m', m]*n - \;^{m'}n*m - m*n^{m'} - [m, m']*n \\
	{}& + m^{n'}*n + m*[n, n'] -\;^{n'}m*n + [n', n]*m  \\
	{}=& [m', m]*n - [n', n] * m - m*[n, n'] - [m, m']* n \\
	{}& + [m, m'] * n + m*[n, n'] + [m', m]*n - [n', n] * m = 0.
\end{align*}
 By Proposition \ref{P:cen} we know that $\Img \psi$ is contained in the centre of $\mm \star \nn$, so $\psi$ is a homomorphism of Leibniz algebras.
\end{proof}

It is clear that $\Img \psi$ is contained in $\Ker (\pi\colon \mathfrak{m}\star \mathfrak{n} \to \mathfrak{m}\curlywedge \mathfrak{n})$, where $\pi$ is the canonical projection.
But the following sequence
\[\xymatrix{
		\Gamma\bigg( \dfrac{\mm \times_{\Gg} \nn }{\langle \mm, \nn \rangle}  \bigg)
 \ar[r]^-{{\psi}} & \mm \star \nn \ar[r]^-{\pi} & \mm \curlywedge \nn \ar[r] & 0,
	}
	\]
is not exact in many cases. Nevertheless, we have the following

\begin{proposition}\label{P:complex1}
	There is an exact sequence of Leibniz algebras
	\[\xymatrix{
		\Gamma\bigg( \dfrac{\mm \times_{\Gg} \nn }{\langle \mm, \nn \rangle}  \oplus   \dfrac{\mm \times_{\Gg} \nn }{\langle \mm, \nn \rangle}  \bigg)
 \ar[r]^-{\widetilde{\psi}} & \mm \star \nn \ar[r]^-{\pi} & \mm \curlywedge \nn \ar[r] & 0,
	}
	\]
	where $\widetilde{\psi}( \gamma ((m, n)+\langle \mm, \nn \rangle , (m', n')+\langle \mm, \nn \rangle)) = m * n' - n*m'$.
\end{proposition}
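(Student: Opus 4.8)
The plan is to prove exactness at the two relevant positions. Since $\pi$ is the canonical projection $\mm\star\nn\to(\mm\star\nn)/(\mm\square\nn)=\mm\curlywedge\nn$, it is surjective and $\Ker\pi=\mm\square\nn$; the whole problem therefore reduces to constructing $\widetilde\psi$ as a well-defined homomorphism and checking that $\Img\widetilde\psi=\mm\square\nn$. I would treat the construction of $\widetilde\psi$ as the substantial part, since once it is available the identification of image with kernel is essentially formal.

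To build $\widetilde\psi$ I would use the universal property of $\Gamma$, producing it from a quadratic map. Set $V=(\mm\times_{\Gg}\nn)/\langle\mm,\nn\rangle$ and consider $Q\colon(\mm\times_{\Gg}\nn)\oplus(\mm\times_{\Gg}\nn)\to\mm\star\nn$, $Q((m,n),(m',n'))=m*n'-n*m'$. Relations (1a) and (1b) give $Q(kw)=k^{2}Q(w)$, and a direct expansion using (2a)--(2d) shows that the polarization $Q(w+\widetilde w)-Q(w)-Q(\widetilde w)$ is $\mathbb{K}$-bilinear (its value on $w=((m,n),(m',n'))$ and $\widetilde w=((\bar m,\bar n),(\bar m',\bar n'))$ being $m*\bar n'+\bar m*n'-n*\bar m'-\bar n*m'$). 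Moreover every value of $Q$ is a generator of $\mm\square\nn$, hence central by Proposition \ref{P:cen}. Thus, provided $Q$ descends to $V\oplus V$, it will be a central-valued quadratic map and will induce, exactly as in the proof of Proposition \ref{P:complex}, a homomorphism of Leibniz algebras $\widetilde\psi\colon\Gamma(V\oplus V)\to\mm\star\nn$ with the stated formula.

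The main obstacle is precisely this descent: I must show that $Q$ is unchanged when a representative is altered by an element $(\tau_{\mm}(x),\tau_{\nn}(x))\in\langle\mm,\nn\rangle$. Altering the first slot adds the term $\tau_{\mm}(x)*n'-\tau_{\nn}(x)*m'$, and since this is linear in $x$ it suffices to take $x=a*b$, turning it into $a^{b}*n'-{}^{a}b*m'$ with $\eta(m')=\mu(n')$. I would rewrite the two summands by the tensor relations (3a) and (3c) as $a^{b}*n'=a*[b,n']+a^{n'}*b$ and ${}^{a}b*m'=[a,m']*b+a*b^{m'}$; the pullback condition $\eta(m')=\mu(n')$ then activates the crossed-module identities $b^{m'}=b^{\mu(n')}=[b,n']$ and $a^{n'}=a^{\eta(m')}=[a,m']$, so all four terms cancel and the extra term vanishes. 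Altering the second slot is symmetric. I expect this step — recognising where to switch on the identity $\eta=\mu$ so that the tensor relations collapse — to be the one genuinely delicate computation.

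Finally I would identify the image. Because $\Gamma(V\oplus V)$ is generated as a $\mathbb{K}$-module by the symbols $\gamma(w)$, the image of $\widetilde\psi$ is spanned by the elements $m*n'-n*m'$ with $\eta(m)=\mu(n)$ and $\eta(m')=\mu(n')$, which are exactly the generators of $\mm\square\nn$. Hence $\Img\widetilde\psi=\mm\square\nn=\Ker\pi$, and together with the surjectivity of $\pi$ this yields the claimed exact sequence.
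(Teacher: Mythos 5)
Your proposal is correct and takes essentially the same route as the paper's proof: the paper likewise reduces everything to the well-definedness of $\widetilde{\psi}$ under altering representatives by elements $(\tau_{\mm}(x),\tau_{\nn}(x))\in\langle \mm,\nn\rangle$, verifies this on tensor generators using relations (3a)--(3d) together with the crossed-module identities activated by the pullback condition $\eta=\mu$, and treats $\Img\widetilde{\psi}=\Ker\pi$ as immediate. Your restriction to generators $x=a*b$ (the $b*a$ type is handled by the symmetric computation) mirrors the paper's own choice of $x=m_1*n_1$, $x'=m_1'*n_1'$, so your argument is at the same level of completeness as the published one.
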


\begin{proof}
Like in Proposition \ref{P:complex}, the crucial part of the proof is to show that
$ (m + \tau_{\mm}(x)) * (n'+\tau_{\nn}(x')) -( n+\tau_{\nn}(x))*(m'+\tau_{\mm}(x')) = m * n' - n*m'$ for all  $x, x'\in \mm \star \nn$.
Let $x = m_1 * n_1$ and $x' = m_1' * n_1'$, then proving that $m*\; ^{m_1'}n_1' + m_1^{n_1} * n' - n * m_1'^{n_1'} - \; ^{m_1}n_1 * m'=0$, will imply the result.
Using the defining identities of the non-abelian tensor product, we get
\begin{align*}
m*\; ^{m_1'}n_1' + m_1^{n_1} * n' &- n * m_1'^{n_1'} - \; ^{m_1}n_1 * m' \\
{} =& [m, m_1'] * n_1' - \;^mn_1'*m_1' + m_1*[n_1, n'] + m_1^{n'}*n_1 \\
{}&+[n, n_1'] * m_1' - \;^nm_1' * n_1' - [m_1, m'] * n_1 - m_1*n_1^{m'} = 0.
\end{align*}
The proof is complete, since it is straightforward that in this case $  \Img\widetilde{\psi} = \Ker \pi$.
\end{proof}

In the particular case of $\Aa$ and $\bb$ being ideals of a Leibniz algebra $\Gg$, Proposition \ref{P:complex} and Proposition \ref{P:complex1} can be viewed as follows:

\begin{corollary}
	There is a well defined homomorphism of Leibniz algebras
	\[\xymatrix{
		\Gamma\bigg(\dfrac{\mathfrak{a} \cap \mathfrak{b}}{[\mathfrak{a}, \mathfrak{b}]}\bigg) \ar[r]^-{\psi} & \mathfrak{a} \star \mathfrak{b} ,
	}
	\]
	given by $\psi(\gamma (c+[\mathfrak{a}, \mathfrak{b}])) = i_1(c) * i_2(c) - i_2(c) * i_1(c)$, for any $c \in \mathfrak{a}\cap \mathfrak{b}$ and the natural inclusions $i_1\colon \mathfrak{a} \cap \mathfrak{b}\to \mathfrak{a}$, $i_2\colon \mathfrak{a} \cap \mathfrak{b}\to \mathfrak{b}$.
\end{corollary}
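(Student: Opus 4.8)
The plan is to obtain this corollary as the special case of Proposition \ref{P:complex} in which the crossed modules are the inclusions $i_1\colon \Aa\to\Gg$ and $i_2\colon\bb\to\Gg$ of two ideals (Example \ref{E:CM}), whose associated mutual actions are simply the restrictions of the bracket of $\Gg$. To carry this out I must identify the domain $\Gamma\big((\Aa\times_\Gg\bb)/\langle\Aa,\bb\rangle\big)$ that appears in Proposition \ref{P:complex} with $\Gamma\big((\Aa\cap\bb)/[\Aa,\bb]\big)$, and then check that under this identification the map $\psi$ acquires the displayed formula. Since Proposition \ref{P:complex} already guarantees that $\psi$ is a well-defined homomorphism of Leibniz algebras, no relations of $\Gamma$ or centrality claims need to be re-verified here.

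First I would compute the pullback. Because $i_1$ and $i_2$ are inclusions, the defining condition $i_1(a)=i_2(b)$ forces $a=b$ as elements of $\Gg$, so $(a,b)\in\Aa\times_\Gg\bb$ precisely when $a=b\in\Aa\cap\bb$; thus the diagonal $c\mapsto(c,c)$ is an isomorphism $\Aa\cap\bb\cong\Aa\times_\Gg\bb$ of Leibniz algebras. Next I would identify the ideal $\langle\Aa,\bb\rangle$. On generators the actions collapse to brackets in $\Gg$, giving $\tau_{\Aa}(a*b)=a^b=[a,b]$ and $\tau_{\bb}(a*b)={}^ab=[a,b]$, and likewise $\tau_{\Aa}(b*a)=\tau_{\bb}(b*a)=[b,a]$. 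Since $\tau_{\Aa}$ and $\tau_{\bb}$ are homomorphisms that agree on a generating set of $\Aa\star\bb$, they coincide on all of $\Aa\star\bb$. Hence every element $(\tau_{\Aa}(x),\tau_{\bb}(x))$ lies on the diagonal, and under the inverse of the diagonal isomorphism the ideal $\langle\Aa,\bb\rangle$ corresponds exactly to the subspace of $\Aa\cap\bb$ spanned by all $[a,b]$ and $[b,a]$, that is to $[\Aa,\bb]$ (note $[\Aa,\bb]\subseteq\Aa\cap\bb$ since both are ideals). Consequently $(\Aa\times_\Gg\bb)/\langle\Aa,\bb\rangle\cong(\Aa\cap\bb)/[\Aa,\bb]$, and applying $\Gamma$ yields the desired domain.

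Finally I would transport the formula. Under the identification above, a generator $\gamma(c+[\Aa,\bb])$ corresponds to $\gamma((c,c)+\langle\Aa,\bb\rangle)$, where the first coordinate is read as $i_1(c)\in\Aa$ and the second as $i_2(c)\in\bb$; Proposition \ref{P:complex} then sends it to $i_1(c)*i_2(c)-i_2(c)*i_1(c)$, which is exactly $\psi$. The one step that genuinely requires care is the identification of $\langle\Aa,\bb\rangle$, namely verifying that $\tau_{\Aa}$ and $\tau_{\bb}$ coincide so that this ideal is diagonal and matches the commutator $[\Aa,\bb]$; everything else is a direct translation, and well-definedness of $\psi$ is inherited from Proposition \ref{P:complex}.
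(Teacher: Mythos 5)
Your proposal is correct and takes essentially the same route as the paper: the paper offers no separate proof, presenting the corollary as the immediate specialization of Proposition~\ref{P:complex} to the crossed modules $i_1\colon \Aa\to\Gg$, $i_2\colon \bb\to\Gg$ of Example~\ref{E:CM}. Your verification that the diagonal gives $\Aa\times_{\Gg}\bb\cong\Aa\cap\bb$, that $\tau_{\Aa}=\tau_{\bb}$ on generators forces $\langle\Aa,\bb\rangle$ to be the diagonal copy of $[\Aa,\bb]$, and that the formula for $\psi$ transports accordingly, simply fills in the translation the paper leaves implicit.
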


\begin{corollary}
	There is an exact sequence of Leibniz algebras
	\[\xymatrix{
		\Gamma\bigg(\dfrac{\mathfrak{a} \cap \mathfrak{b}}{[\mathfrak{a}, \mathfrak{b}]} \oplus \dfrac{\mathfrak{a} \cap \mathfrak{b}}{[\mathfrak{a}, \mathfrak{b}]}  \bigg)
\ar[r]^-{\widetilde{\psi}} & \mathfrak{a} \star \mathfrak{b} \ar[r]^-{\pi} & \mathfrak{a} \curlywedge \mathfrak{b} \ar[r] & 0,
	}
	\]
	where $\widetilde{\psi}(c+[\mathfrak{a}, \mathfrak{b}], c'+[\mathfrak{a}, \mathfrak{b}]) = i_1(c) * i_2(c') - i_2(c) * i_1(c')$, for all $c, c' \in \mathfrak{a}\cap \mathfrak{b}$.
\end{corollary}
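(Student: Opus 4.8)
The plan is to obtain the statement as a direct specialization of Proposition \ref{P:complex1} to the two crossed modules furnished by the inclusions of the ideals. By Example \ref{E:CM}, the inclusions $i_1\colon \mathfrak{a}\hookrightarrow \mathfrak{g}$ and $i_2\colon \mathfrak{b}\hookrightarrow \mathfrak{g}$ are crossed modules whose structure actions are given by the bracket of $\mathfrak{g}$; consequently the induced mutual actions of $\mathfrak{a}$ and $\mathfrak{b}$ are again given by the bracket, so that ${}^{a}b=[a,b]$, $b^{a}=[b,a]$, ${}^{b}a=[b,a]$ and $a^{b}=[a,b]$ for $a\in\mathfrak{a}$, $b\in\mathfrak{b}$. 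First I would identify the pullback: since $i_1$ and $i_2$ are inclusions, the condition $i_1(a)=i_2(b)$ forces $a=b\in\mathfrak{a}\cap\mathfrak{b}$, whence $(c,c)\mapsto c$ is an isomorphism of Leibniz algebras $\mathfrak{a}\times_{\mathfrak{g}}\mathfrak{b}\cong\mathfrak{a}\cap\mathfrak{b}$.

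Next I would compute the ideal $\langle\mathfrak{a},\mathfrak{b}\rangle$ under this identification. Recalling from Subsection \ref{S:tensor} that $\tau_{\mathfrak{a}}(a*b)=a^{b}$, $\tau_{\mathfrak{b}}(a*b)={}^{a}b$, $\tau_{\mathfrak{a}}(b*a)={}^{b}a$ and $\tau_{\mathfrak{b}}(b*a)=b^{a}$, the actions above give $(\tau_{\mathfrak{a}}(a*b),\tau_{\mathfrak{b}}(a*b))=([a,b],[a,b])$ and $(\tau_{\mathfrak{a}}(b*a),\tau_{\mathfrak{b}}(b*a))=([b,a],[b,a])$. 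Since the relations (5\textrm{a})--(5\textrm{d}) express every bracket of generators of $\mathfrak{a}\star\mathfrak{b}$ again as a generator, the vector space $\mathfrak{a}\star\mathfrak{b}$ is spanned by the symbols $a*b$ and $b*a$; and as $\mathfrak{a},\mathfrak{b}$ are ideals, both $[a,b]$ and $[b,a]$ lie in $\mathfrak{a}\cap\mathfrak{b}$. Hence $(\tau_{\mathfrak{a}},\tau_{\mathfrak{b}})$ sends all of $\mathfrak{a}\star\mathfrak{b}$ to the diagonal, and under $\mathfrak{a}\times_{\mathfrak{g}}\mathfrak{b}\cong\mathfrak{a}\cap\mathfrak{b}$ the image $\langle\mathfrak{a},\mathfrak{b}\rangle$ corresponds exactly to the subspace $[\mathfrak{a},\mathfrak{b}]$ spanned by all $[a,b]$ and $[b,a]$. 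Therefore
\[
\frac{\mathfrak{a}\times_{\mathfrak{g}}\mathfrak{b}}{\langle\mathfrak{a},\mathfrak{b}\rangle}\cong\frac{\mathfrak{a}\cap\mathfrak{b}}{[\mathfrak{a},\mathfrak{b}]}.
\]

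Finally I would substitute these identifications into the exact sequence of Proposition \ref{P:complex1}. An element $(c,c)$ of the pullback corresponds to $c\in\mathfrak{a}\cap\mathfrak{b}$, with first coordinate read in $\mathfrak{a}$ as $i_1(c)$ and second coordinate read in $\mathfrak{b}$ as $i_2(c)$; thus the formula $\widetilde{\psi}(\gamma((m,n)+\langle\mathfrak{a},\mathfrak{b}\rangle,(m',n')+\langle\mathfrak{a},\mathfrak{b}\rangle))=m*n'-n*m'$ becomes $i_1(c)*i_2(c')-i_2(c)*i_1(c')$, and the resulting sequence is precisely the one in the statement. I expect no genuine obstacle here: the argument is pure bookkeeping of the induced actions, and the single point requiring care is the convention that $[\mathfrak{a},\mathfrak{b}]$ denotes the span of both $[a,b]$ and $[b,a]$ (consistent with the map $\theta_{\mathfrak{a},\mathfrak{b}}$), which is exactly what makes $\langle\mathfrak{a},\mathfrak{b}\rangle$ match it.
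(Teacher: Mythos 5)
Your proof is correct and follows exactly the route the paper intends: the paper states this corollary without proof as an immediate specialization of Proposition \ref{P:complex1} to the crossed modules $i_1\colon \mathfrak{a}\hookrightarrow\mathfrak{g}$ and $i_2\colon \mathfrak{b}\hookrightarrow\mathfrak{g}$, and your bookkeeping (identifying $\mathfrak{a}\times_{\mathfrak{g}}\mathfrak{b}$ with $\mathfrak{a}\cap\mathfrak{b}$ and $\langle\mathfrak{a},\mathfrak{b}\rangle$ with the two-sided commutator $[\mathfrak{a},\mathfrak{b}]$) supplies precisely the details the paper leaves implicit, including the correct observation that $\mathfrak{a}\star\mathfrak{b}$ is spanned by its generators so that the image of $(\tau_{\mathfrak{a}},\tau_{\mathfrak{b}})$ is the span of $[a,b]$ and $[b,a]$.
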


In the next proposition $\dfrac{\mathfrak{a} \cap \mathfrak{b}}{[\mathfrak{a}, \mathfrak{b}]} \wedge \dfrac{\mathfrak{a} \cap \mathfrak{b}}{[\mathfrak{a}, \mathfrak{b}]} $
denotes the exterior product of $\mathbb{K}$-modules.

\begin{proposition}\label{P: new}
	There is an exact sequence of Leibniz algebras
	\[\xymatrix{
		\dfrac{\mathfrak{a} \cap \mathfrak{b}}{[\mathfrak{a}, \mathfrak{b}]} \wedge \dfrac{\mathfrak{a} \cap \mathfrak{b}}{[\mathfrak{a}, \mathfrak{b}]}
\ar[r]^-{\phi} & \dfrac{\mathfrak{a} \star \mathfrak{b}}{\Img \psi} \ar[r]^-{\overline{\pi}} & \mathfrak{a} \curlywedge \mathfrak{b} \ar[r] & 0,
	}
	\]
	where $\overline{\pi}$ is the canonical projection and
$\phi((c+[\mathfrak{a}, \mathfrak{b}])\wedge ( c'+[\mathfrak{a}, \mathfrak{b}])) = i_1(c) * i_2(c') - i_2(c) * i_1(c')+\Img \psi$, for all $c, c' \in \mathfrak{a}\cap \mathfrak{b}$.
\end{proposition}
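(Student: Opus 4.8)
The plan is to realise $\phi$ as the map induced on exterior products by the ``cross term'' of $\widetilde{\psi}$, and then to read exactness off directly from Proposition \ref{P:complex1}. Throughout write $V=\frac{\mathfrak{a}\cap\mathfrak{b}}{[\mathfrak{a},\mathfrak{b}]}$. First I would construct $\phi$. The assignment $V\times V\to\mathfrak{a}\star\mathfrak{b}$, $(c,c')\mapsto i_1(c)*i_2(c')-i_2(c)*i_1(c')$, is $\mathbb{K}$-bilinear by relations (2a)--(2d) of the non-abelian tensor product, and it is independent of the representatives modulo $[\mathfrak{a},\mathfrak{b}]$ because it is exactly $\widetilde{\psi}(\gamma(c,c'))$, which is well defined by the specialisation of Proposition \ref{P:complex1} to ideals. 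Hence it factors through a linear map $\Phi\colon V\otimes V\to\mathfrak{a}\star\mathfrak{b}$. On the diagonal, $\Phi(c\otimes c)=i_1(c)*i_2(c)-i_2(c)*i_1(c)=\psi(\gamma(c))\in\Img\psi$, so the composite of $\Phi$ with the projection $\mathfrak{a}\star\mathfrak{b}\to\frac{\mathfrak{a}\star\mathfrak{b}}{\Img\psi}$ annihilates every $c\otimes c$ and therefore descends to a linear map on $V\wedge V$; this descended map is precisely $\phi$. Moreover $\Phi(c\otimes c')$ has the defining form of an element of $\mathfrak{a}\square\mathfrak{b}$ (take $m=i_1(c)$, $n=i_2(c)$, $m'=i_1(c')$, $n'=i_2(c')$, so that $\eta(m)=\mu(n)=c$ and $\eta(m')=\mu(n')=c'$), hence $\Img\phi$ lies in the central subspace $\mathfrak{a}\square\mathfrak{b}/\Img\psi$ by Proposition \ref{P:cen}; since the bracket vanishes there and $V\wedge V$ carries the trivial bracket, $\phi$ is a homomorphism of Leibniz algebras.

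For exactness, note first that $\Img\psi\subseteq\mathfrak{a}\square\mathfrak{b}=\Ker\pi$, so $\pi$ factors through $\frac{\mathfrak{a}\star\mathfrak{b}}{\Img\psi}$ to give $\overline{\pi}$, which is surjective because $\pi$ is; this yields exactness at $\mathfrak{a}\curlywedge\mathfrak{b}$. For the middle, the computation above already shows $\Img\phi\subseteq\mathfrak{a}\square\mathfrak{b}/\Img\psi=\Ker\pi/\Img\psi=\Ker\overline{\pi}$. The reverse inclusion is the decisive point: by Proposition \ref{P:complex1} we have $\Ker\pi=\Img\widetilde{\psi}$, and by the very definition of the universal quadratic functor $\Gamma(V\oplus V)$ is generated as a $\mathbb{K}$-module by the symbols $\gamma(w)$ with $w=(c,c')\in V\oplus V$. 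Consequently $\Img\widetilde{\psi}$ is spanned by the elements $\widetilde{\psi}(\gamma(c,c'))=i_1(c)*i_2(c')-i_2(c)*i_1(c')$, and since $\phi(c\wedge c')=\widetilde{\psi}(\gamma(c,c'))+\Img\psi$ by construction, these span $\Img\phi$ modulo $\Img\psi$. Therefore
\[
\Img\phi=\Img\widetilde{\psi}/\Img\psi=\Ker\pi/\Img\psi=\Ker\overline{\pi},
\]
which gives exactness in the middle and finishes the argument.

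The only step requiring genuine care is the construction of $\phi$: one must check that the single map $\Phi$ is simultaneously well defined modulo $[\mathfrak{a},\mathfrak{b}]$ (inherited from the well-definedness of $\widetilde{\psi}$) and sends the diagonal into $\Img\psi$ (the identity $\Phi(c\otimes c)=\psi(\gamma(c))$), for it is precisely this that forces $\phi$ to descend to the exterior product $V\wedge V$. Once $\phi$ is in place, everything is formal, resting only on the identification $\Img\widetilde{\psi}=\Ker\pi$ of Proposition \ref{P:complex1}, the centrality statement of Proposition \ref{P:cen}, and the presentation of $\Gamma$ by its generators.
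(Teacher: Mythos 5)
Your proof is correct, but it takes a genuinely different route from the paper's. The paper proves well-definedness of $\phi$ from scratch: it verifies, by explicit computations with the defining relations (3b), (3c), (3d), (4b) of the non-abelian tensor product, that $i_1(c) * i_2[a, b] - i_2(c) * i_1[a, b] = 0$ and $i_1[a, b] * i_2(c) - i_2[a, b] * i_1(c) = 0$ hold in $\mathfrak{a}\star\mathfrak{b}$ itself, and then dismisses the equality $\Img \phi = \Ker \overline{\pi}$ as straightforward. You instead organize everything as a formal consequence of Proposition \ref{P:complex1}: independence of representatives modulo $[\mathfrak{a},\mathfrak{b}]$ is inherited from the well-definedness of $\widetilde{\psi}$ (via the identification of $\langle \mathfrak{a},\mathfrak{b}\rangle$ with the diagonal copy of $[\mathfrak{a},\mathfrak{b}]$ in $\mathfrak{a}\times_{\Gg}\mathfrak{b}$, which is exactly the content of the corollary the paper records after Proposition \ref{P:complex1}); the descent from $V\otimes V$ to $V\wedge V$ comes from the diagonal identity $\Phi(c\otimes c)=\psi(\gamma(c))\in \Img\psi$, a step the paper leaves entirely implicit; and exactness in the middle follows because $\Gamma(V\oplus V)$ is generated as a $\mathbb{K}$-module by the symbols $\gamma(w)$, so that $\Img\widetilde{\psi}$ is spanned by the elements $\widetilde{\psi}(\gamma(c,c'))$ and hence $\Img\phi = \Img\widetilde{\psi}/\Img\psi = \Ker\pi/\Img\psi = \Ker\overline{\pi}$. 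What your route buys is that no tensor-relation computation is repeated (it is all concentrated in the proof of Proposition \ref{P:complex1}), the centrality of the image via $\mathfrak{a}\square\mathfrak{b}$ and Proposition \ref{P:cen} cleanly explains why $\phi$ is a Leibniz homomorphism, and the paper's ``straightforward'' exactness claim is made precise by the generator argument; what the paper's route buys is a self-contained proof and the slightly stronger fact that the cross terms vanish in $\mathfrak{a}\star\mathfrak{b}$, not merely modulo $\Img\psi$. Two trivial touch-ups to your write-up: bilinearity of $(c,c')\mapsto i_1(c)*i_2(c')-i_2(c)*i_1(c')$ uses relations (1a)--(1b) for the scalar action, not only (2a)--(2d); and you should say explicitly that $V\wedge V$ carries the trivial bracket by convention, so that ``homomorphism of Leibniz algebras'' means precisely that brackets of elements of $\Img\phi$ vanish, which your centrality observation supplies.
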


\begin{proof} To check that $\phi$ is well defined, it suffices to verify the following identities:
\[
	 i_1(c) * i_2[a, b] - i_2(c) * i_1[a, b] =0 ,
\]
\[
	 i_1[a, b] * i_2(c) - i_2[a, b] * i_1(c) =0 ,
\]
for all $c\in \mathfrak{a} \cap \mathfrak{b}$, $a\in \mathfrak{a}$ and $b\in \mathfrak{b}$. Using ($3\textrm{c}$), ($3\textrm{d}$) and ($4\textrm{b}$) we have:

\begin{align*}
	 i_1&(c) * i_2[a, b] - i_2(c) * i_1[a, b] \\
& = 	  i_2[c, a] * i_1( b) -  i_1[c, b] * i_2( a)  - i_2(c) * i_1[a, b] \\
 & = 	 i_1[c, b] * i_2( a) - i_2(c) * i_1[b, a]-  i_1[c, b] * i_2( a)  - i_2(c) * i_1[a, b] =0.
\end{align*}

Using  ($3\textrm{d}$), ($3\textrm{b}$) and ($4\textrm{b}$) we have:

\begin{align*}
	  i_1&[a, b] * i_2(c) - i_2[a, b] * i_1(c)  \\
&= 	   i_2[a, c]* i_1(b)+  i_2(a)* i_1[b, c]-  i_2[a, b]* i_1(c)\\
&=    i_2(a)* i_1[c, b]+  i_2(a)* i_1[b, c] = 0.
\end{align*}

The proof is complete, since it is straightforward that $ \Img\phi = \Ker \overline{\pi}$.
\end{proof}

Let $\mathfrak{g}$ be a Leibniz algebra, and let $\tau \colon \mathfrak{g} \star \mathfrak{g} \to  \mathfrak{g}^{\ab} \otimes \mathfrak{g}^{\ab}$  be the homomorphism defined by
$i_1(g)*i_2(g' )\mapsto (g+ [\mathfrak{g},  \mathfrak{g}])\otimes (g' +  [\mathfrak{g},  \mathfrak{g}])$, $i_2(g)*i_1(g' )\mapsto 0$,
for all $g, g'\in \mathfrak{g}$, where $\otimes$ denotes the tensor product of $\mathbb{K}$-modules. Then, $\tau$ induces well defined homomorphisms
$\overline {\tau}\colon \mathfrak{g} \star \mathfrak{g} \to  \mathfrak{g}^{\ab} \wedge \mathfrak{g}^{\ab}$ and
$\widetilde{\tau}\colon  \dfrac{\mathfrak{g} \star \mathfrak{g}}{\Img \psi} \to  \mathfrak{g}^{\ab} \wedge \mathfrak{g}^{\ab}$, where $\psi$ is defined as in Proposition \ref{P:complex}.

\begin{proposition}\label{P: new1}
	We have the following exact sequence of Leibniz algebras
	
\[\xymatrix{
			0 \ar[r] & \mathfrak{g}^{\ab} \wedge  \mathfrak{g}^{\ab}  \ar[r]^-{\phi} &
\dfrac{\mathfrak{g} \star \mathfrak{g}}{\Img \psi} \ar[r]^-{\overline{\pi}} & \mathfrak{g} \curlywedge \mathfrak{g} \ar[r] & 0.
		}
		\]
where $\overline{\pi}$ is the canonical projection and $\phi$ is defined as in Proposition \ref{P: new}. Moreover, the following map
\[\xymatrix{
\dfrac{\mathfrak{g} \star \mathfrak{g}}{\Img \psi} \ar[r]^-{(\overline{\pi}, \widetilde{\tau})} & (\mathfrak{g} \curlywedge \mathfrak{g}) \oplus  (\mathfrak{g}^{\ab} \wedge \mathfrak{g}^{\ab})
		}
		\]
is an isomorphism of Leibniz algebras.
\end{proposition}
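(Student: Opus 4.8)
The plan is to recognize the displayed exact sequence as the specialization of Proposition~\ref{P: new} to the case $\mathfrak{a}=\mathfrak{b}=\mathfrak{g}$, in which $\mathfrak{a}\cap\mathfrak{b}=\mathfrak{g}$ and $[\mathfrak{a},\mathfrak{b}]=[\mathfrak{g},\mathfrak{g}]$, so that $\frac{\mathfrak{a}\cap\mathfrak{b}}{[\mathfrak{a},\mathfrak{b}]}=\mathfrak{g}^{\ab}$. That proposition already gives exactness at $\frac{\mathfrak{g}\star\mathfrak{g}}{\Img\psi}$ (that is, $\Img\phi=\Ker\overline{\pi}$) together with the surjectivity of $\overline{\pi}$. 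Hence the only genuinely new assertions are the injectivity of $\phi$ and the fact that $(\overline{\pi},\widetilde{\tau})$ is an isomorphism, and I would obtain both from the single observation that $\widetilde{\tau}$ is a retraction of $\phi$.

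The crux is therefore to verify that $\widetilde{\tau}\circ\phi=\id_{\mathfrak{g}^{\ab}\wedge\mathfrak{g}^{\ab}}$. Writing $\overline{g}=g+[\mathfrak{g},\mathfrak{g}]$, the map $\phi$ sends the generator $\overline{g}\wedge\overline{g'}$ to the class of $i_1(g)*i_2(g')-i_2(g)*i_1(g')$ in $\frac{\mathfrak{g}\star\mathfrak{g}}{\Img\psi}$. Applying $\widetilde{\tau}$, which is induced by $\overline{\tau}$, and using $\overline{\tau}(i_1(g)*i_2(g'))=\overline{g}\wedge\overline{g'}$ together with $\overline{\tau}(i_2(g)*i_1(g'))=0$ from the definition of $\tau$, one gets $\widetilde{\tau}(\phi(\overline{g}\wedge\overline{g'}))=\overline{g}\wedge\overline{g'}$. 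As both maps are $\mathbb{K}$-linear and agree on generators, $\widetilde{\tau}\circ\phi$ is the identity. In particular $\phi$ has a left inverse and is injective, which promotes the sequence of Proposition~\ref{P: new} to the short exact sequence
\[
0\to\mathfrak{g}^{\ab}\wedge\mathfrak{g}^{\ab}\overset{\phi}{\to}\frac{\mathfrak{g}\star\mathfrak{g}}{\Img\psi}\overset{\overline{\pi}}{\to}\mathfrak{g}\curlywedge\mathfrak{g}\to 0.
\]

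Finally, I would conclude that $(\overline{\pi},\widetilde{\tau})$ is an isomorphism by the standard splitting argument. Both $\overline{\pi}$ and $\widetilde{\tau}$ are homomorphisms of Leibniz algebras (the latter by the discussion preceding the statement, where $\mathfrak{g}^{\ab}\wedge\mathfrak{g}^{\ab}$ carries the trivial bracket), so $(\overline{\pi},\widetilde{\tau})$ is a homomorphism into the product Leibniz algebra and it suffices to check that it is bijective. For injectivity, if $\overline{\pi}(z)=0$ then $z=\phi(w)$ for some $w$ by exactness, whence $0=\widetilde{\tau}(z)=\widetilde{\tau}(\phi(w))=w$ and thus $z=0$. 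For surjectivity, given $(c,a)$ I would pick $z_0$ with $\overline{\pi}(z_0)=c$ and set $z=z_0+\phi\big(a-\widetilde{\tau}(z_0)\big)$; since $\overline{\pi}\circ\phi=0$ and $\widetilde{\tau}\circ\phi=\id$, this $z$ satisfies $\overline{\pi}(z)=c$ and $\widetilde{\tau}(z)=a$. A bijective homomorphism of Leibniz algebras is an isomorphism, which finishes the proof. The whole argument is essentially formal once $\widetilde{\tau}\circ\phi=\id$ is established, so the only real point requiring care is the bookkeeping in that computation, i.e.\ correctly unwinding the definitions of $\phi$, $\psi$ and $\tau$.
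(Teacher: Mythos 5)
Your proof is correct and follows essentially the same route as the paper: the paper's entire argument is the single observation that $\widetilde{\tau} \circ \phi = 1_{\mathfrak{g}^{\ab} \wedge \mathfrak{g}^{\ab}}$, with exactness and surjectivity of $\overline{\pi}$ coming from the specialization of Proposition~\ref{P: new} to $\mathfrak{a}=\mathfrak{b}=\mathfrak{g}$, and both assertions deduced formally from this retraction. You have merely spelled out the computation of $\widetilde{\tau}\circ\phi$ on generators and the standard splitting argument that the paper leaves implicit.
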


\begin{proof} It is easy to see that $\widetilde{\tau} \circ \phi =1_{\mathfrak{g}^{\ab} \wedge  \mathfrak{g}^{\ab}}$. This implies both parts of the proposition.
\end{proof}

\begin{proposition} Let $\mathfrak{g}$ be a Leibniz algebra such that $\mathfrak{g}^{\ab}$ is free as a $\mathbb{K}$-module. Then there is an exact sequence of Leibniz algebras
	\[\xymatrix{
			0 \ar[r] & \Gamma(\mathfrak{g}^{\ab}) \ar[r]^-{\psi} & \mathfrak{g} \star \mathfrak{g} \ar[r]^-{(\pi, \overline{\tau})} & (\mathfrak{g} \curlywedge \mathfrak{g})
 \oplus  (\mathfrak{g}^{\ab} \wedge \mathfrak{g}^{\ab})\ar[r] & 0.
		}
		\]
\end{proposition}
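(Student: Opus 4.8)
The plan is to reduce almost everything to Proposition~\ref{P: new1}, which already supplies an isomorphism $(\overline{\pi}, \widetilde{\tau})\colon \frac{\mathfrak{g}\star\mathfrak{g}}{\Img\psi}\xrightarrow{\ \sim\ }(\mathfrak{g}\curlywedge\mathfrak{g})\oplus(\mathfrak{g}^{\ab}\wedge\mathfrak{g}^{\ab})$, and then to isolate the single point where the freeness hypothesis is actually used. First I would record that $\pi$ factors through the quotient $\frac{\mathfrak{g}\star\mathfrak{g}}{\Img\psi}$ as $\overline{\pi}$, since $\Img\psi\subseteq\Ker\pi$, and that $\overline{\tau}$ factors as $\widetilde{\tau}$, since $\overline{\tau}(\psi(\gamma(\bar g)))=\bar g\wedge\bar g=0$ forces $\overline{\tau}(\Img\psi)=0$. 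Writing $q\colon\mathfrak{g}\star\mathfrak{g}\to\frac{\mathfrak{g}\star\mathfrak{g}}{\Img\psi}$ for the canonical surjection, this yields $(\pi,\overline{\tau})=(\overline{\pi},\widetilde{\tau})\circ q$. As $(\overline{\pi},\widetilde{\tau})$ is an isomorphism and $\Ker q=\Img\psi$, I immediately conclude that $(\pi,\overline{\tau})$ is surjective and that $\Ker(\pi,\overline{\tau})=\Img\psi$; this gives exactness at $(\mathfrak{g}\curlywedge\mathfrak{g})\oplus(\mathfrak{g}^{\ab}\wedge\mathfrak{g}^{\ab})$ and at $\mathfrak{g}\star\mathfrak{g}$, with no hypothesis on $\mathfrak{g}^{\ab}$.

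Hence the only remaining, and the only nontrivial, point is exactness at $\Gamma(\mathfrak{g}^{\ab})$, that is, the injectivity of $\psi$, and this is where freeness enters. The key observation is that composing $\psi$ with the homomorphism $\tau\colon\mathfrak{g}\star\mathfrak{g}\to\mathfrak{g}^{\ab}\otimes\mathfrak{g}^{\ab}$ gives $\tau(\psi(\gamma(\bar g)))=\bar g\otimes\bar g$, so that $\tau\circ\psi$ is precisely the canonical map $\Gamma(\mathfrak{g}^{\ab})\to\mathfrak{g}^{\ab}\otimes\mathfrak{g}^{\ab}$, $\gamma(a)\mapsto a\otimes a$. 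It therefore suffices to prove that this canonical map is injective when $A:=\mathfrak{g}^{\ab}$ is a free $\mathbb{K}$-module, for injectivity of $\tau\circ\psi$ forces injectivity of $\psi$.

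For the latter I would invoke Proposition~\ref{P:basis}. Choosing a well-ordered basis $\{e_i\}_{i\in I}$ of $A$, the module $\Gamma(A)$ is free on $\{\gamma(e_i)\}_{i\in I}\cup\{\gamma(e_i+e_j)-\gamma(e_i)-\gamma(e_j)\}_{i<j}$, and the canonical map sends $\gamma(e_i)\mapsto e_i\otimes e_i$ and $\gamma(e_i+e_j)-\gamma(e_i)-\gamma(e_j)\mapsto e_i\otimes e_j+e_j\otimes e_i$. Since $A\otimes A$ is free on $\{e_i\otimes e_j\}_{i,j\in I}$, these images are supported on pairwise disjoint sets of coordinates and are visibly $\mathbb{K}$-linearly independent; thus the canonical map carries a basis of $\Gamma(A)$ to a linearly independent family and is injective. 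Consequently $\tau\circ\psi$, hence $\psi$, is injective, completing the exactness of the sequence. I expect the main (indeed the only substantive) obstacle to be this last linear-independence verification, where the well-ordering is used to pin down the off-diagonal terms unambiguously; everything else is formal diagram-chasing built on Proposition~\ref{P: new1}.
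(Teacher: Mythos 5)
Your proposal is correct and follows essentially the same route as the paper's proof: reduce exactness at the middle and right-hand terms to Proposition~\ref{P: new1} via the factorization $(\pi,\overline{\tau})=(\overline{\pi},\widetilde{\tau})\circ q$, then prove injectivity of $\psi$ by composing with $\tau$ and using the basis of $\Gamma(\mathfrak{g}^{\ab})$ from Proposition~\ref{P:basis} to exhibit linearly independent images in $\mathfrak{g}^{\ab}\otimes\mathfrak{g}^{\ab}$. The only difference is that you spell out the linear-independence check (diagonal terms $e_i\otimes e_i$ versus symmetric off-diagonal terms $e_i\otimes e_j+e_j\otimes e_i$) which the paper leaves as ``one sees easily.''
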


\begin{proof} By the previous proposition it suffices to show that $\psi$ is injective.
By Proposition \ref{P:basis} one sees easily that the composition $\tau \circ \psi \colon \Gamma(\mathfrak{g}^{\ab}) \to \mathfrak{g}^{\ab} \otimes \mathfrak{g}^{\ab}$  maps  a basis of $\Gamma(\mathfrak{g}^{\ab})$ injectively into a set of linearly independent elements. Therefore $\tau \circ \psi$ is injective and $\psi$ is injective.
\end{proof}

\section{Comparison of the second Lie and Leibniz homologies of a Lie algebra}\label{S:h2}

In this section we return to the case when $\mathbb{K}$ is a field, $\mathfrak{g}$ denotes a Lie algebra and $H_2(\mathfrak{g})$ denotes  the second Chevalley-Eilenberg homology of $\mathfrak{g}$.
It is known that there is an epimorphism $t_{\mathfrak{g}} \colon HL_2(\mathfrak{g}) \to H_2(\mathfrak{g})$ defined in a natural way (see e.g. \cite{Pira} ).

\begin{proposition}
There  exists a vector subspace $V$  of $\Ker \{ t_{\mathfrak{g}} \colon HL_2(\mathfrak{g}) \to H_2(\mathfrak{g})\}$ such that we have an epimorphism $V  \to  \Gamma(\mathfrak{g}^{\ab})$. Hence,
if $\mathfrak{g}$ is not a perfect Lie algebra, then  $t_{\mathfrak{g}} \colon HL_2(\mathfrak{g}) \to H_2(\mathfrak{g})$
is not an isomorphism.
\end{proposition}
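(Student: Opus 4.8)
The plan is to realise $\Gamma(\mathfrak{g}^{\ab})$ directly at the chain level of the two homology complexes, exploiting the fact that the comparison epimorphism $t_{\mathfrak{g}}$ is induced by the canonical surjection of complexes $\pi_{*}\colon CL_{*}(\mathfrak{g})\to C^{\mathrm{CE}}_{*}(\mathfrak{g})$ whose degree-$n$ component is the projection $\mathfrak{g}^{\otimes n}\to\Lambda^{n}\mathfrak{g}$; this is precisely the source of the natural map $t_{\mathfrak{g}}$ recalled above. Thus $HL_2(\mathfrak{g})=Z/B$ with $Z=\Ker(d_2\colon\mathfrak{g}\otimes\mathfrak{g}\to\mathfrak{g})$ and $B=\Img(d_3\colon\mathfrak{g}^{\otimes 3}\to\mathfrak{g}\otimes\mathfrak{g})$, and for a class $[z]\in HL_2(\mathfrak{g})$ one has $t_{\mathfrak{g}}([z])=[\pi_2(z)]$. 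The key structural input I would invoke is the classical short exact sequence of $\mathbb{K}$-modules
\[
0\longrightarrow\Gamma(\mathfrak{g})\overset{\iota}{\longrightarrow}\mathfrak{g}\otimes\mathfrak{g}\overset{\pi_2}{\longrightarrow}\Lambda^2\mathfrak{g}\longrightarrow 0,\qquad \iota(\gamma(x))=x\otimes x,
\]
which is available because $\mathfrak{g}$ is a vector space: both exactness and the injectivity of $\iota$ read off immediately from Proposition \ref{P:basis}.

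First I would produce the subspace $V$. Since $\mathfrak{g}$ is a Lie algebra, $d_2(x\otimes x)=[x,x]=0$, so every symmetric tensor is a Leibniz $2$-cycle; that is, $\iota(\Gamma(\mathfrak{g}))\subseteq Z$. I define $V$ to be the image of $\iota(\Gamma(\mathfrak{g}))$ in $HL_2(\mathfrak{g})=Z/B$. Because $\pi_2\circ\iota=0$, for every $u\in\Gamma(\mathfrak{g})$ we get $t_{\mathfrak{g}}([\iota(u)])=[\pi_2\iota(u)]=0$, whence $V\subseteq\Ker t_{\mathfrak{g}}$. Identifying $\Gamma(\mathfrak{g})$ with its image $\iota(\Gamma(\mathfrak{g}))$, this gives $V\cong\Gamma(\mathfrak{g})/(\Gamma(\mathfrak{g})\cap B)$.

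Next I would build the epimorphism $V\to\Gamma(\mathfrak{g}^{\ab})$ out of the functorial surjection $\Gamma(c)\colon\Gamma(\mathfrak{g})\to\Gamma(\mathfrak{g}^{\ab})$ induced by the abelianisation $c\colon\mathfrak{g}\to\mathfrak{g}^{\ab}$ (surjective since $\Gamma(\mathfrak{g}^{\ab})$ is generated by the $\gamma(\bar x)=\Gamma(c)(\gamma(x))$). It suffices to verify $\Gamma(\mathfrak{g})\cap B\subseteq\Ker\Gamma(c)$, for then $\Gamma(c)$ factors through the quotient $V$. Writing $K=[\mathfrak{g},\mathfrak{g}]$, the explicit boundary formula
\[
d_3(x\otimes y\otimes z)=[x,y]\otimes z-[x,z]\otimes y-x\otimes[y,z]
\]
shows at once that $B\subseteq K\otimes\mathfrak{g}+\mathfrak{g}\otimes K$. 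On the other hand, comparing the above exact sequence for $\mathfrak{g}$ with the corresponding one for $\mathfrak{g}^{\ab}$ via the vertical maps $\Gamma(c)$, $c\otimes c$ and $\Lambda^2 c$, a short diagram chase gives $\Ker\Gamma(c)=\iota^{-1}(\Ker(c\otimes c))=\Gamma(\mathfrak{g})\cap(K\otimes\mathfrak{g}+\mathfrak{g}\otimes K)$, using $\Ker(c\otimes c)=K\otimes\mathfrak{g}+\mathfrak{g}\otimes K$ and the injectivity of the bottom $\iota$. Combining the two inclusions yields $\Gamma(\mathfrak{g})\cap B\subseteq\Ker\Gamma(c)$, so $\Gamma(c)$ descends to the desired epimorphism $V=\Gamma(\mathfrak{g})/(\Gamma(\mathfrak{g})\cap B)\to\Gamma(\mathfrak{g}^{\ab})$. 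The last assertion follows immediately: if $\mathfrak{g}$ is not perfect then $\mathfrak{g}^{\ab}\neq 0$, so $\Gamma(\mathfrak{g}^{\ab})\neq 0$ by Proposition \ref{P:basis}, hence $V\neq 0$ and $t_{\mathfrak{g}}$ is not injective.

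The main obstacle, and the step I would treat most carefully, is the bookkeeping in the third paragraph, where one must keep track of the \emph{correct} copy of $\Gamma$. The copy $\iota(\Gamma(\mathfrak{g}))$ living inside the Leibniz cycles is $\Gamma$ of $\mathfrak{g}$ itself, whereas the target is $\Gamma$ of the abelianisation; the entire argument hinges on the observation that \emph{all} Leibniz boundaries already sit in the commutator part $K\otimes\mathfrak{g}+\mathfrak{g}\otimes K$, which is exactly the kernel of the abelianisation on symmetric tensors. I would also make sure $\Gamma$ is handled correctly in characteristic $2$, where it is the divided rather than the symmetric square, but this is guaranteed uniformly by Proposition \ref{P:basis}, which supplies both the exact sequence and the nonvanishing of $\Gamma$ in every characteristic.
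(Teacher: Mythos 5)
Your proposal is correct, but it takes a genuinely different route from the paper. The paper stays inside its own non-abelian exterior product framework: it invokes Theorem \ref{theorem1} (and Ellis's Lie-algebra analogue) to view $t_{\mathfrak{g}}$ as the natural map from $\Ker\{\mathfrak{g}\curlywedge\mathfrak{g}\to\mathfrak{g}\}$ to $\Ker\{\mathfrak{g}\underset{Lie}{\curlywedge}\mathfrak{g}\to\mathfrak{g}\}$, takes $V=\Ker\{\mathfrak{g}\curlywedge\mathfrak{g}\to\mathfrak{g}\underset{Lie}{\curlywedge}\mathfrak{g}\}$, and then a diagram chase between the two short exact sequences over $\mathfrak{g}\underset{Lie}{\curlywedge}\mathfrak{g}$ produces an epimorphism from $V$ onto $\Ker\{\mathfrak{g}\underset{Lie}{\star}\mathfrak{g}\to\mathfrak{g}\underset{Lie}{\curlywedge}\mathfrak{g}\}$, which is identified with $\Gamma(\mathfrak{g}^{\ab})$ by citing Ellis's Proposition 17. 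You instead work at the chain level: your $V$ is the image of the symmetric tensors $\iota(\Gamma(\mathfrak{g}))\subseteq\Ker d_2$ in $HL_2(\mathfrak{g})$, and the epimorphism onto $\Gamma(\mathfrak{g}^{\ab})$ comes from functoriality of $\Gamma$ along the abelianization, justified by the inclusion $\Img d_3\subseteq K\otimes\mathfrak{g}+\mathfrak{g}\otimes K$ and the Koszul-type exact sequence $0\to\Gamma(A)\to A\otimes A\to\Lambda^2 A\to 0$ for free modules. Your argument is more elementary and self-contained -- it needs neither Ellis's result nor any of the paper's $\curlywedge$-machinery, only Proposition \ref{P:basis} and standard kernel computations over a field -- whereas the paper's proof is more structural, exhibiting $\Ker t_{\mathfrak{g}}$ as (containing) the kernel of the comparison map between the Leibniz and Lie exterior squares, which ties the statement to the central-extension results elsewhere in the paper. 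The one hypothesis you import that the paper leaves implicit is the chain-level description of $t_{\mathfrak{g}}$ as induced by the projections $\mathfrak{g}^{\otimes n}\to\Lambda^n\mathfrak{g}$; this is indeed the definition in Pirashvili's paper that the authors cite, so the two treatments are consistent, but it would be worth stating that identification explicitly as the definition you are using.
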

\begin{proof}  Let  $\mathfrak{g} \underset{Lie}{\star} \mathfrak{g}$ (resp. $\mathfrak{g} \underset{Lie}{ \curlywedge} \mathfrak{g}$) denote the non-abelian tensor (resp. exterior) square of the Lie algebra $\mathfrak{g}$ (see \cite{Ell91}).
Then we have two epimorphisms  $ \mathfrak{g} \curlywedge \mathfrak{g}\to \mathfrak{g} \underset{Lie}{ \star} \mathfrak{g}$ and $\mathfrak{g} \curlywedge \mathfrak{g}\to \mathfrak{g} \underset{Lie}{ \curlywedge} \mathfrak{g}$ defined in a natural way.
Since $t_{\mathfrak{g}} \colon HL_2(\mathfrak{g}) \to H_2(\mathfrak{g})$ can be viewed as the natural homomorphism
from  $\Ker \{\mathfrak{g} \curlywedge \mathfrak{g}\to \mathfrak{g} \}$ to $\Ker \{\mathfrak{g} \underset{Lie}{\curlywedge} \mathfrak{g} \to \mathfrak{g}\}$, we have that
$t_{\mathfrak{g}}(\Ker\{\mathfrak{g} \curlywedge \mathfrak{g}\to \mathfrak{g} \underset{Lie}{ \curlywedge} \mathfrak{g}\})=0$.
Let $V= \Ker\{\mathfrak{g} \curlywedge \mathfrak{g}\to \mathfrak{g} \underset{Lie}{ \curlywedge} \mathfrak{g}\}$. Consider the following commutative diagram with exact rows:
\begin{equation*}
\xymatrix@+20pt{
0\ \ar@{->}[r]& V \ \ar@{->}[r]
\ar@{->}[d]_{ }
 &\mathfrak{g}\curlywedge \mathfrak{g}\ar@{->}[r]
\ar@{->}[d]_{ }
&\mathfrak{g} \underset{Lie}{ \curlywedge} \mathfrak{g} \ar@{->}[r]
\ar@{=}[d]_{ }
&0 \\
0\ \ar@{->}[r]
& \Ker\{ \mathfrak{g} \underset{Lie}{ \star} \mathfrak{g}\to \mathfrak{g} \underset{Lie}{ \curlywedge} \mathfrak{g}\}   \ \ar@{->}[r]
 & \mathfrak{g} \underset{Lie}{ \star} \mathfrak{g}\ar@{->}[r]
& \mathfrak{g} \underset{Lie}{ \curlywedge} \mathfrak{g}\ar@{->}[r]
&0 .
}\end{equation*}
From this diagram we have an epimorphism $V\to  \Ker\{ \mathfrak{g} \underset{Lie}{ \star} \mathfrak{g}\to \mathfrak{g} \underset{Lie}{ \curlywedge} \mathfrak{g}\}$. Moreover, by \cite[Proposition 17]{Ell91}
 $\Ker\{ \mathfrak{g} \underset{Lie}{ \star} \mathfrak{g}\to \mathfrak{g} \underset{Lie}{ \curlywedge} \mathfrak{g}\}=\Gamma(\mathfrak{g}^{ab})$.
\end{proof}

\begin{proposition}  For a perfect Lie algebra $\mathfrak{g}$, we have the following exact sequence:
$$
0\to HL_2 (\mathfrak{g} \underset{Lie}{ \star} \mathfrak{g}) \to HL_2(\mathfrak{g}) \to H_2(\mathfrak{g}) \to 0 .
$$
\end{proposition}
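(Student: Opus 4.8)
The plan is to identify $\Ker t_{\mathfrak{g}}$ with $HL_2(\mathfrak{g}\underset{Lie}{\star}\mathfrak{g})$ and to feed this into a short exact sequence coming from the epimorphism $t_{\mathfrak{g}}$. Write $p\colon \mathfrak{g}\curlywedge \mathfrak{g}\to \mathfrak{g}\underset{Lie}{\curlywedge}\mathfrak{g}$ for the natural epimorphism and $V=\Ker p$, exactly as in the previous proposition. Since the Leibniz structural map factors as $\theta_{\mathfrak{g},\mathfrak{g}}=\theta^{Lie}\circ p$, where $\theta^{Lie}\colon \mathfrak{g}\underset{Lie}{\curlywedge}\mathfrak{g}\to \mathfrak{g}$ is the Lie structural map, every element of $V$ lies in $\Ker\theta_{\mathfrak{g},\mathfrak{g}}=HL_2(\mathfrak{g})$; combined with the description (from the previous proposition) of $t_{\mathfrak{g}}$ as the restriction of $p$ to $HL_2(\mathfrak{g})$, this gives $\Ker t_{\mathfrak{g}}=HL_2(\mathfrak{g})\cap V=V$. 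As $t_{\mathfrak{g}}$ is an epimorphism, I obtain the short exact sequence $0\to V\to HL_2(\mathfrak{g})\to H_2(\mathfrak{g})\to 0$, so the whole proposition reduces to the isomorphism $V\cong HL_2(\mathfrak{g}\underset{Lie}{\star}\mathfrak{g})$.

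To set this up I would first use perfectness of $\mathfrak{g}$. Then $\mathfrak{g}^{\ab}=0$, hence $\Gamma(\mathfrak{g}^{\ab})=0$, and by \cite[Proposition 17]{Ell91} the canonical map $\mathfrak{g}\underset{Lie}{\star}\mathfrak{g}\to \mathfrak{g}\underset{Lie}{\curlywedge}\mathfrak{g}$ is an isomorphism; denote this common perfect Lie algebra by $\mathfrak{u}$ (the universal central extension of a perfect Lie algebra is again perfect). On the Leibniz side, the proposition on perfect Leibniz algebras identifies $\theta_{\mathfrak{g},\mathfrak{g}}\colon \mathfrak{g}\curlywedge\mathfrak{g}\to \mathfrak{g}$ with the universal central extension of $\mathfrak{g}$ in $\Lb$; in particular $\mathfrak{g}\curlywedge\mathfrak{g}$ is perfect, and being a universal central extension it is superperfect, i.e. $HL_2(\mathfrak{g}\curlywedge\mathfrak{g})=0$.

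The heart of the argument is to recognise $p\colon \mathfrak{g}\curlywedge\mathfrak{g}\twoheadrightarrow \mathfrak{u}$ as the universal central extension of $\mathfrak{u}$. The map $p$ is an epimorphism, and its kernel $V$ is central in $\mathfrak{g}\curlywedge\mathfrak{g}$ because $V\subseteq HL_2(\mathfrak{g})$ and $HL_2(\mathfrak{g})$ lies in the centre of $\mathfrak{g}\curlywedge\mathfrak{g}$ (the extension $\theta_{\mathfrak{g},\mathfrak{g}}$ being central). Thus $p$ is a central extension whose total algebra is perfect with vanishing second Leibniz homology. By the standard recognition criterion for universal central extensions of perfect Leibniz algebras, $p$ is therefore the universal central extension of $\mathfrak{u}$, and hence $V\cong HL_2(\mathfrak{u})=HL_2(\mathfrak{g}\underset{Lie}{\star}\mathfrak{g})$, using that the kernel of the universal central extension of a perfect Leibniz algebra $\mathfrak{u}$ equals $HL_2(\mathfrak{u})$ (as follows from Theorem \ref{theorem1} and the perfect-Leibniz-algebra proposition applied to $\mathfrak{u}\curlywedge\mathfrak{u}$). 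Substituting into the exact sequence of the first paragraph finishes the proof.

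The main obstacle is this last recognition step: one must check carefully that $V$ genuinely sits inside the centre of $\mathfrak{g}\curlywedge\mathfrak{g}$, and then invoke the correct universal-central-extension machinery for $\Lb$ — in particular the superperfectness of a universal central extension and the identification of its kernel with $HL_2$. Everything else is bookkeeping with the commutative square relating the Leibniz and Lie (tensor and exterior) squares of $\mathfrak{g}$.
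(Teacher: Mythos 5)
Your proof is correct and follows essentially the same route as the paper: both identify $\Ker t_{\mathfrak{g}}$ with the kernel of the natural epimorphism from $\mathfrak{g}\curlywedge\mathfrak{g}$ onto the Lie tensor square of $\mathfrak{g}$, observe that this kernel is central (lying in $\Ker\theta_{\mathfrak{g},\mathfrak{g}}=HL_2(\mathfrak{g})$) and that $\mathfrak{g}\curlywedge\mathfrak{g}$ is superperfect as the universal central extension of the perfect algebra $\mathfrak{g}$ in $\Lb$, and then conclude via the recognition criterion that this epimorphism is the universal central extension of $\mathfrak{g}\underset{Lie}{\star}\mathfrak{g}$ in $\Lb$, so its kernel is $HL_2(\mathfrak{g}\underset{Lie}{\star}\mathfrak{g})$. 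The only cosmetic differences are that you pass through $\mathfrak{g}\underset{Lie}{\curlywedge}\mathfrak{g}$ and invoke \cite[Proposition 17]{Ell91} to identify it with $\mathfrak{g}\underset{Lie}{\star}\mathfrak{g}$, whereas the paper works with the Lie tensor square directly via the commutative diagram of the two universal central extensions, and you correctly state ``superperfect'' where the paper's text has the apparent typo ``supersolvable''.
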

\begin{proof} Let $\lam \colon \mathfrak{g} \star \mathfrak{g} \to \mathfrak{g} \underset{Lie}{ \star} \mathfrak{g}$ be the
natural epimorphism. We have the following commutative diagram:

\begin{equation*}
\xymatrix@+20pt{
0\ \ar@{->}[r]& HL_2(\mathfrak{g}) \ \ar@{->}[r]
\ar@{->}[d]_{ t_{\mathfrak{g}}}
 &\mathfrak{g}\curlywedge \mathfrak{g}\ar@{->}[r]
\ar@{->}[d]_{\lam }
& \mathfrak{g} \ar@{->}[r]
\ar@{=}[d]
&0 \\
0\ \ar@{->}[r]
& H_2(\mathfrak {g})  \ \ar@{->}[r]
 & \mathfrak{g} \underset{Lie}{ \star} \mathfrak{g}\ar@{->}[r]
& \mathfrak{g} \ar@{->}[r]
&0 ,
}\end{equation*}
where the first row is the universal central extension in the category of Lie algebras, and the second one is
 the universal central extension in the category of Leibniz algebras. This diagram implies that
$\Ker t_{\mathfrak{g}}=\Ker\lam$. Hence, $\Ker \lam$ is contained in the center of $\mathfrak{g}\curlywedge \mathfrak{g}$.
Moreover, since $\mathfrak{g}\curlywedge \mathfrak{g}$ is the supersolvable Leibniz algebra, we can conclude that
$\lam \colon \mathfrak{g} \star \mathfrak{g} \to \mathfrak{g} \underset{Lie}{ \star} \mathfrak{g}$ is the universal central
extension of $ \mathfrak{g} \underset{Lie}{ \star} \mathfrak{g}$ in the category of Leibniz algebras. Thus,
$$
HL_2(\mathfrak{g} \underset{Lie}{ \star} \mathfrak{g}) =\Ker \lam =\Ker t_{\mathfrak{g}}.
$$
The proof is complete.
\end{proof}

%

\section*{Acknowledgments}
The authors were supported by Ministerio de Econom\'ia y Competitividad (Spain) (European FEDER support included), grants MTM2013-43687-P and MTM2016-79661-P. The
second author was also supported by Xunta de Galicia, grant GRC2013-045 (European FEDER support included), by an FPU scholarship,
Ministerio de Educaci\'on, Cultura y Deporte (Spain) and by a Fundaci\'on Barri\'e scolarship. The third author was supported by Shota Rustaveli National Science Foundation, grant FR/189/5-113/14.

\end{document}